\documentclass{amsart}

\usepackage{graphicx,color}
\usepackage{geometry}

\title[Discretization for a fourth order diffusion equation]{Long-Time Behavior of a Finite Volume Discretization for a Fourth Order Diffusion Equation}
\author{Jan Maas}
\address{Jan Maas \\ Institute of Science and Technology Austria (IST Austria)
 \\ \newline Am Campus 1 \\  3400 Klosterneuburg \\ Austria}
\email{jan.maas@ist.ac.at}
\author{Daniel Matthes}
\address{Daniel Matthes \\ Technische Universit\"at M\"unchen \\ Zentrum Mathematik/M8 \\ Boltzmannstr. 3 \\ D-85747 Garching \\ Germany}
\email{matthes@ma.tum.de}
\thanks{This research was supported by the DFG Collaborative Research Centers TRR 109, ``Discretization in Geometry and Dynamics'' and 1060 ``The Mathematics of Emergent Effects''.}
\date{\today}
\subjclass[2010]{Primary: 65M08 Secondary: 35K35 53C21 60J25}

\newcommand{\setR}{{\mathbb R}}
\newcommand{\setRnn}{{\mathbb{R}_{\ge0}}}
\newcommand{\setN}{{\mathbb N}}
\newcommand{\qtext}[1]{\quad\text{#1}}
\newcommand{\qtextq}[1]{\quad\text{#1}\quad}
\newcommand{\dd}{\,\mathrm{d}}
\newcommand{\dn}{\mathrm{d}}
\newcommand{\dff}{\mathrm{D}}

\newcommand{\pot}{\Phi}
\newcommand{\aux}{\Psi}
\newcommand{\ons}{\mathbf{K}}
\newcommand{\cvx}{\mathbf{M}}
\newcommand{\ent}{\mathcal{H}}
\newcommand{\fish}{\mathcal{I}}

\newcommand{\mf}{\mathfrak{M}}
\newcommand{\tg}{\mathrm{T}}
\newcommand{\mc}{\mathbb{M}}

\newcommand{\cube}{\Omega}%{{[0,1]^d}}
%{{[0,1]^d}}
\newcommand{\intcube}{\int_{\Omega}}
\newcommand{\prb}{\mathcal{P}_+}
\newcommand{\prbnn}{\mathcal{P}}
\newcommand{\bi}{{\mathbf i}}
\newcommand{\bj}{{\mathbf j}}
\newcommand{\nbh}[2]{#1\leftrightarrow#2}

\newcommand{\eps}{\varepsilon}

\newtheorem{thm}{Theorem}
\newtheorem{prp}{Proposition}
\newtheorem{lem}{Lemma}
\newtheorem{rmk}{Remark}

\DeclareMathOperator{\Hess}{Hess}

\newcommand{\ee}{\mathbf{e}}

\definecolor{darkred}{rgb}{0.9,0.1,0.1}

\begin{document}

\begin{abstract}
  We consider a non-standard finite-volume discretization 
  of a strongly non-linear fourth order diffusion equation on the $d$-dimensional cube, 
  for arbitrary $d\ge1$.
  The scheme preserves two important structural properties of the equation:
  the first is the interpretation as a gradient flow in a mass transportation metric,
  and the second is an intimate relation to a linear Fokker-Planck equation.
  Thanks to these structural properties, the scheme possesses two discrete Lyapunov functionals. 
  These functionals approximate the entropy and the Fisher information, respectively,
  and their dissipation rates converge to the optimal ones in the discrete-to-continuous limit.
  Using the dissipation, we derive estimates on the long-time asymptotics of the discrete solutions.
  Finally, we present results from numerical experiments 
  which indicate that our discretization is able to capture
  significant features of the complex original dynamics,
  even with a rather coarse spatial resolution.
\end{abstract}

\maketitle

\section{Introduction}
\subsection{The QDD equation}
In this note, we introduce and analyze a particular spatial discretization of the following non-linear parabolic equation of fourth order,
\begin{align}
  \label{eq:dlss}
  \partial_t u = 
 - \nabla\cdot\left(u\nabla\left[\frac{\Delta u}{u}+\Delta\log u\right]\right) + \nabla\cdot(u\nabla W),
  \quad u=u(t;x)>0,\,t>0,\,x\in\cube:=[0,1]^d,
\end{align}
subject to variational boundary conditions, see \eqref{eq:bc} below.
The potential $W:\cube\to\setR$ is assumed to satisfy certain structural conditions \eqref{eq:VW} and \eqref{eq:decompose};
a possible choice is $W(x)=\lambda^*/2|x-\bar x|^2$ for arbitrary $\lambda^*>0$ and $\bar x\in\setR^d$ 

Equation \eqref{eq:dlss}, which is referred to 
as \emph{Quantum drift diffusion (QDD) equation} or as \emph{Derrida-Lebowitz-Speer-Spohn (DLSS) equation} in the literature,
appears, e.g., in semi-conductor modelling \cite{Degond,Juengel} 
and in the analysis of interface motion in spin systems \cite{DLSS1,DLSS2}.
Depending on the context, the non-linear term is written in one of several equivalent forms:
\begin{align*}
  \nabla\cdot\left(u\nabla\left[\frac{\Delta u}{u}+\Delta\log u\right]\right)
  = 2\nabla\cdot\left(u\,\nabla\left(\frac{\Delta{\sqrt u}}{\sqrt u}\right)\right) 
  = \nabla^2:\big(u\,\nabla^2\log u\big).
\end{align*}
Existence and qualitative properties of (weak) solutions to \eqref{eq:dlss} have been intensively analyzed in the past two decades \cite{BLS,GST,JM,JT,JV,MMS}.
For instance, it has been proven --- see \cite{GST} for the most comprehensive result --- 
that the initial boundary value problem for \eqref{eq:dlss}\&\eqref{eq:bc}
possesses a non-negative and mass preserving global weak solution $u:\setR_+\times\Omega\to\setR$
for all non-negative initial conditions $u_0\in L^1(\Omega)$ of finite entropy.
By scaling invariance, we may assume without loss of generality in the following that the solution $u$ is a time-dependent probability density.

Several (semi-)discrete approximations of \eqref{eq:dlss} have been studied, both analytically and numerically. 
The schemes presented in \cite{BEJ,CJT,DMM,JP,MO} inherit \emph{some} structural properties of \eqref{eq:dlss}, 
like monotonicity of certain quantities. 
All of these schemes have in common that they provide non-negative (semi-)discrete solutions.

Here we continue in the spirit of \cite{MO},
where a discretization was performed on grounds of \eqref{eq:dlss}'s gradient flow structure with respect to the $L^2$-Wasserstein metric, which leads to a scheme that simultaneously preserves two essential Lyapunov functionals.
From these Lyapunov functionals, estimates on the fully discrete solutions were derived
and have been used to analyze their long-time asymptotics \cite{O} and the discrete-to-continuous limit \cite{MO}.

However, here we do not use the Lagrangian structure behind \eqref{eq:dlss} --- which was essential in \cite{MO,O} ---
but define a scheme on grounds of a finite-volume discretization.
Our ansatz is motivated by a particular structure-preserving discretization of linear Fokker-Planck equations,
which has been introduced simultaneously 
in \cite{CHLZ11,Maas,Mielke11a}.
Using this ``Eulerian approach'', we overcome the limitation of \cite{MO,O} to $d=1$ space dimension.
The similarities with \cite{MO,O} are that we rely on the gradient flow formulation of \eqref{eq:dlss},
and that we design the discretization in such a way that enforces monotonicity of two Lyapunov functionals.
We remark that the general idea to preserve simultaneous monotonicity of several functionals in the discretization
has been used for other equations before, 
like in the context of the formally similar thin film equations, see \cite{Grun,GrunRumpf,Bertozzi}.

\subsection{Structural properties and long-time asymptotics}
Most of the qualitative results for \eqref{eq:dlss} are based on two fundamental structural properties:
the first is its gradient flow structure with respect to the $L^2$-Wasserstein metric \cite{GST},
and the second is an intimate relation to a certain Fokker-Planck equation \cite{DM}.
That Fokker-Planck equation has the form
\begin{align}
  \label{eq:fokker}
  \partial_s v_s = \Delta v_s + \nabla\cdot(v_s\nabla V) \qtextq{in $\cube$,}
  \qquad
  \partial_\nu(v_s/\pi)=0 \qtext{on $\partial\cube$},
\end{align}
where $\pi:\Omega\to\setR_+$ is given by
\begin{align}
  \label{eq:steady}
  \pi(x) = \frac1Ze^{-V(x)} \qtextq{with} Z=\intcube e^{-V(x')}\dd x'
\end{align}
and defines the unique stationary probability density $\pi$ for \eqref{eq:fokker}.
To establish the connection between \eqref{eq:dlss} and \eqref{eq:fokker}, 
we shall assume henceforth that the respective potentials $V$ and $W$ are related via
\begin{align}
  \label{eq:VW}
  W=|\nabla V|^2-2\Delta V,
\end{align}
and that $V$ is $\lambda$-convex with some positive $\lambda$, i.e., $\nabla^2V\ge\lambda>0$. 
Notice that $V(x)=\frac\lambda2|x-\bar x|^2$ is an admissible choice, 
and leads to $W(x)=\lambda^2|x-\bar x|^2-2d\lambda$.

A direct computation shows that $\pi$ is a stationary solution to \eqref{eq:dlss} as well,
provided the boundary conditions are chosen appropriately:
\begin{align}
  \label{eq:bc}
  \partial_\nu(u/\pi) = \partial_\nu \left(\frac{\Delta u}{u}+\Delta\log u+W\right)=0 \quad\text{on $\partial\cube$}.
\end{align}
Another formal computation reveals that \eqref{eq:dlss} and \eqref{eq:fokker} have two Lyapunov functionals in common,
namely the relative logarithmic entropy $\ent$ and the relative Fisher information $\fish$,
given by
\begin{align}
  \label{eq:entfish}
  \ent(w) = \intcube w\log(w/\pi)\dd x \qtextq{and} \fish(w) = \intcube w\,|\nabla\log(w/\pi)|^2\dd x.
\end{align}
In fact, both \eqref{eq:fokker} and \eqref{eq:dlss} are gradient flows
--- for $\ent$ and for $\fish$, respectively --- 
in the $L^2$-Wasserstein metric.
That is, formally, we can write 
\begin{align}
  \label{eq:ogf}
  \partial_sv = -\ons_v\dff_v\ent
  \qtextq{and}
  \partial_tu = -\ons_u\dff_u\fish,
\end{align}
respectively,
where $\ons$ is the Onsager operator (inverse metric tensor) of the Wasserstein metric,
\begin{align*}
  \ons_u\xi = - \nabla\cdot(u\nabla\xi).
\end{align*}
The final but most important connection between \eqref{eq:fokker} and \eqref{eq:dlss} is 
the following relation between the respective potentials of the two gradient flows:
\begin{align}
  \label{eq:miracle}
  \fish(v) = \dff_v\ent[\ons_v\dff_v\ent].
\end{align}
That is, the potential of the gradient flow \eqref{eq:dlss} is the dissipation of the entropy $\ent$ along its own gradient flow.
Despite the fact that the representation \eqref{eq:miracle} of $\fish$ is classical, 
implications on the dynamics of the fourth order equation \eqref{eq:dlss} have been drawn only recently in \cite{DM}, 
see also \cite{CT,MMS,MS}.

It turns out \cite{DM,MMS} that the equilibration behavior of \eqref{eq:dlss} 
is intimately related to the one of \eqref{eq:fokker}.
We summarize the relevant estimates.
Thanks to the $\lambda$-convexity of $V$, it follows 
that both $\ent$ and $\fish$ decay with exponential rate $\lambda$ along solutions $v$ to \eqref{eq:fokker},
\begin{align}
  \label{eq:fokkerdecay}
  \ent(v_s)\le \ent(v_{s'}) e^{-2\lambda(s-s')} \qtextq{and} \fish(v_s)\le\fish(v_{s'}) e^{-2\lambda(s-s')} \qtext{for all $s\ge s'\ge0$},
\end{align}
and that the Fisher information can be estimated just in terms of the initial value of the entropy,
\begin{align}
  \label{eq:fokkerauxiliary}
  \fish(v_s) \le \ent(v_0)s^{-1} \qtext{for all $s>0$.}
\end{align}
With $V$ and $W$ related by \eqref{eq:VW}, 
the following analogous estimate can be shown for solutions to the QDD equation \eqref{eq:dlss}:
\begin{align}
  \label{eq:dlssdecay}
  &\ent(u_t)\le\ent(u_{t'}) e^{-(2\lambda)^2(t-t')} 
  \qtextq{and} 
  \fish(u_t)\le\fish(u_{t'}) e^{-(2\lambda)^2(t-t')} \qtext{for all $t\ge t'\ge0$,} \\
  \label{eq:auxiliary}
  &\fish(u_t)\le\ent(u_0)(2\lambda t)^{-1} \qtext{for all $t>0$.}
\end{align}
We review the derivation of \eqref{eq:fokkerdecay}--\eqref{eq:auxiliary} in Section \ref{sct:formal}.

\subsection{Discretization and main result}
The leading principle for our spatial discretization of \eqref{eq:dlss} is that 
the semi-discrete solutions to that scheme inherit the estimates in \eqref{eq:dlssdecay} and \eqref{eq:auxiliary}.
We discretize \eqref{eq:dlss} and \eqref{eq:fokker} simultaneously in order to preserve their close relation.

For the discretization of \eqref{eq:fokker} we follow an approach based on the entropy gradient flow structure for Markov chains developed in \cite{CHLZ11,Maas,Mielke11a}, which has been subsequently applied in \cite{DisLie,EM13,GM12,Mielke}.

We perform a finite volume discretization with a regular cubic lattice:
fix a box length $h=1/N$ with $N\in\setN$
and consider piecewise constant probability densities $u^h$ 
on the equi-distant subdivision of $\cube$ in $N^d$ sub-cubes of side length $h$.
Now, we replace \eqref{eq:ogf} by
\begin{align}
  \label{eq:dgf}
  \partial_sv^h = -\ons^h_v\dff_v\ent^h
  \qtextq{and}
  \partial_tu^h = -\ons^h_u\dff_u\fish^h,
\end{align}
respectively, where the discretized entropy $\ent^h$ is given
(up to an additive constant $\gamma^h>0$ defined in \eqref{eq:gammah}) by the restriction of $\ent$,
and the discretized Fisher information $\fish^h$ is obtained by the relation \eqref{eq:miracle}, i.e.,
\begin{align}
  \label{eq:dmiracle}
  \ent^h(u^h) = \ent(u^h)-\gamma^h, \quad \fish^h(u^h) = \dff_{u^h}\ent^h[\ons_{u^h}\dff_{u^h}\ent^h].
\end{align}
The discretized Onsager operator $\ons^h$
--- which implicitly determines a metric on the piecewise constant density functions ---
is designed such that the gradient flow of $\ent^h$ is the forward equation for a continuous time Markov chain.
The appropriate and rather non-obvious choice for $\ons^h$, see \eqref{eq:dons}, 
was independently found in \cite{Maas} and in \cite{Mielke11a}.

For the main result that we formulate below we need an additional hypothesis 
on the potential $V$, namely that
\begin{align}\label{eq:decompose}
  V(x) = V^{[1]}(x_1) + \cdots + V^{[d]}(x_d) 
\end{align}
for suitable functions $V^{[k]}:[0,1]\to\setR$, which, by definition of $\pi$ in \eqref{eq:steady},
is equivalent to the following factorization of the steady state:
\begin{align}
  \label{eq:factorize}
  \pi(x) = \pi^{[1]}(x_1)\cdots\pi^{[d]}(x_d), 
  \qtextq{where}\pi^{[k]}(x)=\frac1{Z^{[k]}}e^{-V^{[k]}(x)},
\end{align}
with suitable normalization constants $Z^{[k]}>0$ such that
the $\pi^{[1]},\ldots,\pi^{[d]}$ are probability densities on $[0,1]$.
Under the discretization, $\pi$ is replaced by a particular piecewise constant approximation $\pi^h$,
which is the unique minimizer of $\ent^h$, see Lemma \ref{lem:dpi}. 
The approximation $\pi^h$ still factors in the same form as above, see \eqref{eq:dfactor}.
\begin{thm}
  \label{thm:main}
  Assume that a pair of potentials $V$, $W$ satisfying 
  the relation \eqref{eq:VW} and the technical hypothesis \eqref{eq:decompose} is given.
  Assume further that $V$ is $\lambda$-convex with some $\lambda>0$.

  For a given discretization parameter $h>0$, 
  define discretized entropy and Fisher information as in \eqref{eq:dmiracle}, 
  and a discrete Onsager operator as in \eqref{eq:dons}.
  Then any solution $u^h$ of the discrete gradient flow
  \begin{align*}
    \partial_tu^h = -\ons^h_{u^h}\dff_{u^h}\fish^h    
  \end{align*}
  satisfies the following analogues of \eqref{eq:dlssdecay} and \eqref{eq:auxiliary},
  \begin{align}
    \label{eq:ddlssdecay}
    &\ent^h(u^h_t)\le \ent^h(u^h_{t'}) e^{-(2\lambda^h)^2(t-t')} 
    \qtextq{and} 
    \fish^h(u^h_t)\le\fish^h(u^h_{t'}) e^{-(2\lambda^h)^2(t-t')} \qtext{for all $t\ge t'\ge0$,}\\
    \label{eq:dauxiliary}
    &\fish^h(u^h_t) \le \ent^h(u_0)\,(2\lambda^ht)^{-1} \qtext{for all $t>0$.}
  \end{align}
  Consequently, $u^h_t$ approaches the equilibrium $\pi^h$ exponentially fast,
  \begin{align}
    \label{eq:L1}
    \|u^h_t-\pi^h\|_{L^1(\cube)} \le \sqrt{2\ent^h(u^h_0)}\,e^{-2(\lambda^h)^2t}.
  \end{align}
  Above, $\lambda^h=\lambda+O(h^2)$ as $h\downarrow0$.
\end{thm}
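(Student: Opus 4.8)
The plan is to transport the formal continuous derivation of \eqref{eq:dlssdecay}--\eqref{eq:auxiliary} to the discrete setting, where the single substantial ingredient turns out to be a discrete geodesic convexity estimate for the entropy; all four displayed estimates then follow from it by elementary operator inequalities together with the discrete miracle relation \eqref{eq:dmiracle}. Throughout I denote by $\langle\cdot,\cdot\rangle$ and $\|\cdot\|$ the inner product and norm of the Riemannian metric encoded by $\ons^h$, so that the metric gradient of a functional is $\ons^h\dff$ and the QDD flow reads $\partial_tu^h=-\ons^h_{u^h}\dff_{u^h}\fish^h$; I write $\Hess\ent^h$ for the (covariant) Hessian of $\ent^h$ in this metric. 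I also use that, with the normalizing constant $\gamma^h$ of \eqref{eq:gammah} chosen so that $\ent^h(\pi^h)=0$, the functional $\ent^h$ coincides on piecewise constant densities with the relative entropy of $u^h$ with respect to $\pi^h$.

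The heart of the matter is the lower Hessian bound $\Hess\ent^h\ge\lambda^h$ with $\lambda^h=\lambda+O(h^2)$, that is, geodesic $\lambda^h$-convexity of $\ent^h$ in the transportation metric induced by $\ons^h$. Here the factorization hypothesis \eqref{eq:decompose}/\eqref{eq:factorize} is essential: because $V=V^{[1]}+\cdots+V^{[d]}$ and $\pi^h$ factors accordingly, the continuous-time Markov chain $\partial_sv^h=-\ons^h\dff\ent^h$ is a tensor product of $d$ one-dimensional birth--death chains, and geodesic convexity of the entropy tensorizes, the convexity modulus of the product being the minimum of the one-dimensional moduli. It therefore suffices to treat the one-dimensional chain approximating the Fokker--Planck operator with potential $V^{[k]}$; since $\lambda$-convexity of $V$ is equivalent to $(V^{[k]})''\ge\lambda$ for every $k$, a direct computation of the discrete Hessian of the one-dimensional entropy yields a modulus $\lambda+O(h^2)$, and taking the minimum over $k$ gives $\lambda^h=\lambda+O(h^2)$. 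I expect this to be the main obstacle: establishing geodesic convexity for these discrete transportation metrics is delicate, and extracting the sharp constant with the correct $O(h^2)$ error requires a careful discrete Bakry--\'Emery-type estimate in one dimension.

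Granting $\Hess\ent^h\ge\lambda^h$, the remaining steps are formal. First, geodesic $\lambda^h$-convexity implies the modified logarithmic Sobolev inequality $\fish^h(w)\ge 2\lambda^h\,\ent^h(w)$ by the standard Otto--Villani argument in the Markov-chain setting. Next, writing $g=\ons^h_{u^h}\dff_{u^h}\ent^h$ for the metric gradient of the entropy, the discrete miracle \eqref{eq:dmiracle} reads $\fish^h=\|g\|^2$, and differentiating a squared gradient norm gives the identity $\ons^h\dff\fish^h=2\,\Hess\ent^h[g]$. Along the QDD flow this produces
\[
  \frac{\dn}{\dn t}\ent^h(u^h_t)=-2\langle g,\Hess\ent^h[g]\rangle\le-2\lambda^h\|g\|^2=-2\lambda^h\fish^h(u^h_t)
\]
and
\[
  \frac{\dn}{\dn t}\fish^h(u^h_t)=-\|\ons^h\dff\fish^h\|^2=-4\|\Hess\ent^h[g]\|^2\le-4(\lambda^h)^2\|g\|^2=-(2\lambda^h)^2\fish^h(u^h_t),
\]
where the last inequality uses the elementary operator bound $\|A\xi\|^2\ge\mu^2\|\xi\|^2$ valid for symmetric $A\ge\mu>0$. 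The second line already gives the Fisher decay in \eqref{eq:ddlssdecay}, while combining the first with the modified log-Sobolev inequality gives $\frac{\dn}{\dn t}\ent^h\le-(2\lambda^h)^2\ent^h$ and hence the entropy decay in \eqref{eq:ddlssdecay}.

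Finally, the auxiliary estimate \eqref{eq:dauxiliary} follows by integrating $-\frac{\dn}{\dn t}\ent^h\ge 2\lambda^h\fish^h$ over $[0,t]$ and using that $\fish^h$ is nonincreasing, giving $2\lambda^ht\,\fish^h(u^h_t)\le 2\lambda^h\int_0^t\fish^h(u^h_{t'})\dd t'\le\ent^h(u^h_0)$. The $L^1$ bound \eqref{eq:L1} is then immediate from the Csisz\'ar--Kullback--Pinsker inequality $\|u^h_t-\pi^h\|^2_{L^1(\cube)}\le 2\ent^h(u^h_t)$ together with the entropy decay, upon taking square roots. It remains only to record existence, uniqueness and smoothness of the discrete flow needed to justify the differentiations above, which is routine since \eqref{eq:dgf} is a smooth ODE on the simplex of strictly positive discrete densities, whose interior is preserved because the metric degenerates at the boundary.
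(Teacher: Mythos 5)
Your overall architecture is sound and runs parallel to the paper's: reduce to a one-dimensional convexity estimate by tensorization, run the Bakry--\'Emery calculus of iterated gradients to get both decay rates, and finish with Csisz\'ar--Kullback. Your formal identities (the relation $\ons^h\dff\fish^h=2\Hess\ent^h[\ons^h\dff\ent^h]$, the operator bound $\|A\xi\|^2\ge\mu^2\|\xi\|^2$ for symmetric $A\ge\mu$, the integration argument for \eqref{eq:dauxiliary}) are correct and reproduce Propositions \ref{prp:be} and \ref{prp:mms}. There is, however, both a genuine difference of route and a genuine gap. The difference: you insist on the full Hessian bound $\Hess\ent^h\ge\lambda^h$, i.e.\ geodesic convexity, whereas the paper deliberately works with the weaker convex decay inequality \eqref{eq:cvx-e-d}, which tests the Hessian only against the entropy's own gradient. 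This is not cosmetic: the one-dimensional geodesic-convexity criterion of \cite{Mielke} yields a modulus $\tilde\lambda^h$ given by a geometric mean of rate increments, see \eqref{eq:lambdastar}, which is strictly smaller than the arithmetic-mean constant $\lambda^h$ of \eqref{eq:lambdah} obtained from the criterion of \cite{CDPP09} for \eqref{eq:cvx-e-d}. Both are $\lambda+O(h^2)$, so your route would still prove the theorem as stated, only with a marginally worse rate; on the other hand, tensorization of geodesic convexity is already in the literature \cite{EM}, whereas the paper must prove the tensorization of \eqref{eq:cvx-e-d} itself (Theorem \ref{thm:tensorisation}).

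The gap is exactly where you flag it: the one-dimensional estimate is asserted (``a direct computation of the discrete Hessian yields a modulus $\lambda+O(h^2)$'') but not carried out, and this is the entire technical content of the theorem. It is not a routine computation: for these discrete transportation metrics the Hessian of the entropy involves the partial derivatives of the logarithmic mean, and no elementary second-difference argument produces the bound. What actually makes it work in the paper is the single inequality $\sqrt{\Pi^{[k],h}_{i+1}\Pi^{[k],h}_{i-1}}\le\big(1-\tfrac{h^2}{2}\lambda^h\big)\Pi^{[k],h}_i$ of \eqref{eq:rootest}, obtained by integrating the midpoint convexity of $V^{[k]}$ over a cell; this yields the monotonicity \eqref{eq:abmonotone} of the birth--death rates and the quantitative lower bound \eqref{eq:el} on $(\alpha_i-\alpha_{i+1})+(\beta_i-\beta_{i-1})$ required by \cite[Theorem~3.1]{CDPP09} (or its geometric-mean analogue for \cite{Mielke}). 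Without supplying this, or an equivalent discrete Bakry--\'Emery computation, your proof does not close. A secondary, fixable point: your justification that the flow stays in the interior of the simplex (``the metric degenerates at the boundary'') is insufficient --- the Onsager operator extends continuously to the boundary, so degeneration of the metric does not by itself prevent the trajectory from reaching it in finite time. The paper instead uses that $\fish^h$ is a Lyapunov functional whose sublevel sets are compact in $\prb(J^h)$ (Lemma \ref{lem:fishy}), since $\fish^h$ blows up as any component of $U$ tends to zero.
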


\subsection{Geodesic convexity vs. convex entropy decay}
All of the --- continuous and discrete --- equations under consideration here 
will be gradient flows of geodesically $\lambda$-convex functionals.
The proof of our main result Theorem \ref{thm:main} above, however, 
does not require to use the full power of $\lambda$-convexity.
Instead, we work with a weaker property that we call \emph{convex decay inequality},
see \eqref{eq:cvx-e-d} in Section \ref{sct:flowestimates}.
In a nutshell, the difference is that 
we do not require the Hessian of the functional to be larger or equal to $\lambda$ in \emph{every} direction,
but only in the direction of the functional's own gradient, at each given point.

This weaker form of convexity has been used in numerous places and in various disguises 
for the derivation of equilibration estimates, typically in connection with the Bakry-\'{E}mery method, 
see e.g. \cite{frech} and references therein.
Recently, an adaptation of this convexity concept to Markov chains has been developed in \cite{CDPP09}.
There are examples --- see Remark \ref{rmk:MielkevsCaputo} ---
where the modulus of convexity improves (slightly) 
upon relaxation from geodesic convexity to convex decay. 

A key technical ingredient in the proof of our main result is the tensorization property of the convex decay inequality.
This result is given in Section \ref{sct:tensor}, and might be of independent interest.

\subsection{Plan of the paper}
In Section \ref{sct:classical} below, we review 
the basic results from the general theory of gradient flows and the Bakry-\'{E}mery method 
which are relevant for the study of our equation \eqref{eq:dlss} and our discretization.
Sections \ref{sct:dfokker} and \ref{sct:ddlss} are devoted to discretizations.
In Section \ref{sct:dfokker}, we analyze the properties of a finite-volume discretization 
for the linear Fokker-Planck equation \eqref{eq:fokker} in the spirit of \cite{Maas,Mielke11a}.
In Section \ref{sct:ddlss}, we define a ``compatible'' discretization of the QDD equation and
prove the main result Theorem \ref{thm:main}.
We conclude by discretizing in time as well, 
and perfoming a series of numerical experiments in dimension $d=2$
to illustrate the (non-)optimality of the theoretical decay estimates.

%%%%%%%%%%%%%%%%%%%%%%%%%%%%%%%%%%%%%%%%%%%%%%%%%%%%%%%%%%%%%%%%%%%%%%%%%%%%%

\section{Estimates for $\lambda$-convex gradient flows}
\label{sct:classical}
In this section, we shall mainly collect and rephrase classical and recent results 
about the large-time behavior of gradient flows.
Throughout this section, we assume smoothness of all appearing analytical structures.
These smoothness assumptions are justified 
in the analysis of the discretizations in Sections \ref{sct:dfokker}\&\ref{sct:ddlss} below, 
provided that one restricts to strictly positive probability densities.
The application to solutions of the original evolution equation \eqref{eq:dlss}, however, 
are purely formal and only serve as a motivation.

\subsection{$\lambda$-convex gradient flows}
Let a smooth Riemannian manifold $\mf$ with metric $d$ be given.
For simplicity we assume that $\mf$ is an open subset of a (finite-dimensional) affine space $\mathcal{X}$.
At each point $u\in\mf$, there is a one-to-one correspondence 
between the scalar product $\langle\cdot,\cdot\rangle_u$ on $\tg_u\mf$
and the Onsager operator $\ons_u:\tg_u^\star\mf\to\tg_u\mf$,
which is the uniquely determined linear isomorphism with
\begin{align*}
 \langle \ons_up,\xi \rangle_u = p[\xi] \qtext{for all $\xi\in\tg_u\mf$ and $p\in\tg_u^\star\mf$.}
\end{align*}
Note that $\ons_u$ is symmetric, in the sense that 
\begin{align*}
 p_1[\ons_u p_2]
   = \langle \ons_u p_1, \ons_u p_2 \rangle_u
   = \langle \ons_u p_2, \ons_u p_1 \rangle_u
   =  p_2[\ons_u p_1]
   \qtext{for all $p_1, p_2 \in\tg_u^\star\mf$.}
\end{align*}
In the application discussed here, the Onsager operator (and not the scalar product) will be the given quantity. In fact, in our application, the Onsager operator extends continuously to the boundary of $\mf$ in $\mathcal{X}$, while the Riemannian metric degenerates at the boundary.

The gradient flow of a given smooth potential $\pot:\mf\to\setR$
is then defined as (solution to) the differential equation
\begin{align}
  \label{eq:gf}
  \dot u = F_\pot(u) := -\ons_u\dff_u\pot.
\end{align}
By smoothness of $\pot$, 
local solutions $u:[0,T)\to\mf$ to \eqref{eq:gf} exist for any initial condition $u_0\in\mf$, 
and the only possible obstruction to global existence is that $u$ leaves $\mf$ at time $T>0$.

A central notion in the theory is that of $\lambda$-convexity of $\pot$ (with $\lambda \in \setR$),
which means that $\Hess \pot \geq \lambda$, where the Hessian is to be understood in the Riemannian structure of the $\mf$, and the inequality holds in the sense of quadratic forms:
\begin{align*}
 \Hess_u \pot[\xi, \xi] \geq \lambda \| \xi \|_u^2 \qtext{for all $u \in \mf$ and $\xi \in \tg_u\mf$.}
\end{align*}
An elegant ``Eulerian'' approach for proving $\lambda$-convexity has been developed in \cite{OW,DS}. 
This approach has been implemented in \cite{EM} on the manifold of probability measures over a finite state space. 
A useful characterization of $\lambda$-convexity, that does not involve the metric but only the Onsager operator, has been formulated in \cite{LM}: 
at each $u\in\mf$, define the bi-linear form $\cvx_u$ on $\tg^\star_u\mf$ via\begin{align*}
  \cvx_u[p,p] = - p\big[\dff_uF_\pot [\ons_up]\big] + \frac12 p\big[\dff_u\ons[F_\pot(u)]p\big].
\end{align*}
Then the functional $\pot$ is $\lambda$-convex if and only if the tensor $\cvx$ satisfies the estimate
\begin{align}
  \label{eq:cvx}
  \cvx\ge\lambda\ons
\end{align}
in the sense that $\cvx_u[p,p]\ge\lambda p[\ons_up]$ for all $u\in\mf$ and $p\in\tg_u^\star\mf$.
Here the differential $\dff_u$ is to be interpreted using the linear structure of the ambient space $\mathcal{X}$:
\begin{align*}
 \dff_u F_\pot[\xi] = \lim_{\eps \to 0} \frac{F_\pot(u + \eps \xi) - F_\pot(u)}{\eps} \in \tg_u \mf,
\end{align*}
and analogously for $\dff_u\ons$.
\begin{rmk}
In a smooth Riemannian setting,  $\lambda$-convexity of $\pot$ implies $\lambda$-contractivity for its gradient flow, i.e.,
\[d(u_t,u_t')\le e^{-\lambda t}d(u_0,u_0') 
\qtext{for all $t>0$ and arbitrary solutions $u$, $u'$ to \eqref{eq:gf}}.\]
\end{rmk}

\subsection{Estimates on the flow}
\label{sct:flowestimates}
In the following discussion, we will \emph{not} require the full strength of the $\lambda$-convexity assumption $\cvx \geq \lambda \ons$ from \eqref{eq:cvx}. 
Instead, in our calculations we will only apply \eqref{eq:cvx} to the argument $\dff_v\pot$.
The resulting \emph{convex decay inequality}
\begin{align}\label{eq:cvx-e-d}
  \tag{CDI} 
  \cvx[\dff_v\pot,\dff_v\pot] \ge \lambda\dff_v\pot[\ons_v\dff_v\pot]
\end{align}
is weaker than \eqref{eq:cvx}. 
Since 
\[ -\frac{\dn}{\dd s}\Phi(v_s) = \dff_v\pot[\ons_v\dff_v\pot]
\quad\text{and} \quad
\frac12 \frac{\dn^2}{\dd^2s}\Phi(v_s) = \cvx(\dff_v\pot,\dff_v\pot) \] 
--- as will be shown in the proof below ---
\eqref{eq:cvx-e-d} provides a relation between the first and the second derivative of $\pot$ along its gradient flow.
The inequality \eqref{eq:cvx-e-d} lies at the heart of the Bakry-\'Emery approach to functional inequalities \cite{BE}. 
In a Markov chain setting, the inequality \eqref{eq:cvx-e-d} has been studied in \cite{CDPP09}, see also \cite{EMT14,FM15}.

Our general hypothesis in the remainder of this section is that \eqref{eq:cvx-e-d} holds for some $\lambda>0$. 
We also assume that $\pot$ has a unique global minimizer $\bar u \in \mf$, 
and, without loss of generality,  that $\pot(u)\ge\pot(\bar u)=0$ for all $u\in\mf$.  
Clearly, these conditions are satisfied when $\pot$ is $\lambda$-convex, in which case $\bar u$ is the only critical point of $\pot$.

The auto-dissipation $|\partial\pot|^2:\mf\to\setR$ of $\pot$ is defined by
\begin{align}
  \label{eq:1}
  |\partial\pot|^2(u) = \dff_u\pot[\ons_u\dff_u\pot].
\end{align}
It follows from our assumptions that $|\partial\pot|^2(\bar u)=0$.
\begin{prp}[Gradient flow estimates for $\Phi$]
  \label{prp:be}
  Along any solution $(v_s)_{s\ge0}$ of the gradient flow \eqref{eq:gf}, 
  we have, for arbitrary $s\ge s'\ge0$,
  \begin{align}
    \label{eq:entdown}
    \pot(v_s) &\le \pot(v_{s'})e^{-2\lambda(s-s')}, \\
    \label{eq:fishdown}
    |\partial\pot|^2(v_s) &\le |\partial\pot|^2(v_{s'})e^{-2\lambda(s-s')},
  \end{align}
  and further, for arbitrary $s>0$,
  \begin{align}
    \label{eq:fishbyent}
    |\partial\pot|^2(v_s) 
    \le\pot(v_0)\, \frac{2\lambda}{e^{2\lambda s} - 1} 
 \le\pot(v_0)\,s^{-1}.
  \end{align}
  Moreover, the following functional inequality holds for arbitrary $v\in\mf$:
  \begin{align}
    \label{eq:logsob}
    2\lambda\pot(v)\le|\partial\pot|^2(v).
  \end{align}
\end{prp}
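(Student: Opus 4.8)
The plan is to reduce everything to two scalar quantities along the flow, namely $\phi(s):=\pot(v_s)$ and its auto-dissipation $D(s):=|\partial\pot|^2(v_s)$, and then to run an elementary ODE-comparison argument governed by the convex decay inequality \eqref{eq:cvx-e-d}. First I would establish the two differential identities announced before the statement. The identity $\phi'(s)=-D(s)$ is immediate from the chain rule and the definition $F_\pot=-\ons\dff\pot$ of the flow \eqref{eq:gf}. For the second identity $\frac12\phi''(s)=\cvx_{v_s}[\dff_{v_s}\pot,\dff_{v_s}\pot]$, equivalently $D'(s)=-2\cvx_{v_s}[\dff_{v_s}\pot,\dff_{v_s}\pot]$, I would differentiate $D(s)=\dff_{v_s}\pot[\ons_{v_s}\dff_{v_s}\pot]$ once more along $\dot v_s=-\ons_{v_s}\dff_{v_s}\pot$, using the symmetry of $\ons$ and the product rule $\dff_u F_\pot[\xi]=-\dff_u\ons[\xi]\dff_u\pot-\ons_u\dff_u(\dff\pot)[\xi]$; collecting terms against the defining formula for $\cvx$ gives exactly $D'=-2\cvx[\dff\pot,\dff\pot]$. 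This is a direct, if slightly tedious, computation, and is the only place where the precise algebraic shape of $\cvx$ enters.

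With the identities in hand, the two dissipation-based estimates follow cheaply. Applying \eqref{eq:cvx-e-d} to $p=\dff_{v_s}\pot$ gives $\cvx[\dff\pot,\dff\pot]\ge\lambda D(s)$, hence $D'(s)\le-2\lambda D(s)$, and Grönwall's lemma yields \eqref{eq:fishdown}. For the auxiliary bound \eqref{eq:fishbyent} I would not invoke the long-time limit at all: for $0\le\sigma\le s$ the just-proved monotonicity gives $D(\sigma)\ge D(s)\,e^{2\lambda(s-\sigma)}$, and integrating in $\sigma$ together with $\int_0^s D(\sigma)\dd\sigma=\phi(0)-\phi(s)\le\phi(0)$ (using $\pot\ge0$) produces $\phi(0)\ge D(s)\,\frac{e^{2\lambda s}-1}{2\lambda}$, which is the first inequality in \eqref{eq:fishbyent}; the second is just $e^{x}-1\ge x$.

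The remaining two assertions, the decay \eqref{eq:entdown} and the functional inequality \eqref{eq:logsob}, are equivalent along the flow: \eqref{eq:entdown} amounts to $\phi'\le-2\lambda\phi$, i.e. to $D(s)\ge2\lambda\phi(s)$, which is \eqref{eq:logsob} evaluated at $v_s$. I would obtain \eqref{eq:logsob} from the monotonicity of $\psi(s):=D(s)-2\lambda\phi(s)$: since $\psi'=D'+2\lambda D\le0$, the function $\psi$ is non-increasing, so $\psi(s)\ge\lim_{\sigma\to\infty}\psi(\sigma)=-2\lambda\,\phi(\infty)$, where I used $D(\sigma)\to0$ from \eqref{eq:fishdown} and wrote $\phi(\infty):=\lim_{\sigma\to\infty}\pot(v_\sigma)$. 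Thus $D(s)\ge2\lambda(\phi(s)-\phi(\infty))$, and \eqref{eq:logsob} for the arbitrary initial datum $v=v_0$ follows precisely once $\phi(\infty)=0$. Estimate \eqref{eq:entdown} is then read off from \eqref{eq:logsob} along the flow by one further Grönwall step.

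The main obstacle is therefore to prove $\phi(\infty)=0$, i.e. that $\pot$ relaxes to its minimal value. I would argue that the gradient curve has finite length: by \eqref{eq:fishdown} its speed obeys $\|\dot v_s\|_{v_s}=\sqrt{D(s)}\le\sqrt{D(0)}\,e^{-\lambda s}$, so $\int_0^\infty\|\dot v_s\|_{v_s}\dd s<\infty$ and $v_s$ is Cauchy in the metric; its limit has vanishing velocity, hence is a critical point of $\pot$, which by the standing hypothesis is the unique minimizer $\bar u$, giving $\phi(\infty)=\pot(\bar u)=0$. The delicate point, and the reason I isolate this step, is that the Riemannian metric degenerates at $\partial\mf$, so finite length in the metric must be promoted to genuine convergence inside $\mf$ and the limit must be kept away from the boundary. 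In the concrete discretizations of Sections~\ref{sct:dfokker}--\ref{sct:ddlss} this causes no trouble, since the flow stays in the interior of strictly positive densities, where $\ons$ and the metric are smooth and non-degenerate, so the abstract argument applies verbatim.
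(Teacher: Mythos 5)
Your proposal is correct and follows essentially the same route as the paper: the identity $-\frac{\dn}{\dd s}|\partial\pot|^2(v_s)=2\,\cvx_{v_s}[\dff_{v_s}\pot,\dff_{v_s}\pot]$ combined with \eqref{eq:cvx-e-d} and Gronwall gives \eqref{eq:fishdown}, integration of the dissipation identity together with $\pot(v_s)\to\pot(\bar u)=0$ gives \eqref{eq:logsob} (your monotone auxiliary function $D-2\lambda\phi$ is just a repackaging of the paper's integration over $[s',s]$ with $s\to\infty$), and the remaining two estimates follow by one more Gronwall step and by integrating the monotone dissipation over $[0,s]$, exactly as in the paper. The only difference is that you supply a finite-length/Cauchy argument for the convergence $\pot(v_s)\to0$, which the paper simply asserts.
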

These results are classical.
We sketch a proof here, which derives the estimates directly from the  hypothesis \eqref{eq:cvx-e-d} by elementary calculations.
The idea is to use the method of iterated gradients from \cite{BE}.
\begin{proof}
  We start by proving \eqref{eq:fishdown}.
  To this end, we estimate the decay  in time of
  \begin{align}
    \label{eq:auxstrange}
    |\partial\pot|^2(v) = \dff_v\pot[\ons_v\dff_v\pot]
      = \sum_{i,j} \ons_{ij}(v) \partial_i \pot(v)  \partial_j \pot(v),
  \end{align}
which is
\begin{equation}\begin{aligned}    \label{eq:J}
    J(v_s) := -\frac{\dd}{\dd s}|\partial\pot|^2(v_s).
\end{aligned}\end{equation}  
From the last representation in \eqref{eq:auxstrange},
we obtain, writing $\pot_i = \partial_i \pot$ and $\pot_{ij} = \partial_i \partial_j\pot$ for brevity,
\begin{align*}
  J(v)  &= - \sum_{i,j,k} \Big(2\ons_{ij}(v)  \pot_{ik}(v) 
  +  \partial_k \ons_{ij}(v) \pot_i(v)     \Big) \pot_j(v) F_k^\pot(v).
\end{align*}  
On the other hand, the definition of $\cvx_v$ yields
\begin{align*}
  & \cvx_v[\dff_v\pot,\dff_v\pot]
  =  \dff_v\pot\big[\dff_v F_\pot [F_\pot(v)]\big] +  \frac12 \dff_v\pot\big[\dff_v\ons[F_\pot(v)]\dff_v\pot\big] \\ 
  & =  \sum_{i,j}\pot_i(v) \partial_j F_i^\pot(v) F_j^\pot(v) 
  + \frac12 \sum_{i,j,k}  \pot_i(v) \partial_k \ons_{ij}(v) F_k^\pot(v) \pot_j(v)  \\
  &  = - \sum_{i,j,k} \pot_i(v) \Big(  \partial_j \ons_{ik}(v) \pot_k(v) +  \ons_{ik}(v) \pot_{j k}(v) \Big)  F_j^\pot(v)
  - \frac12 \pot_i(v) \partial_k \ons_{ij}(v) F_k^\pot(v)  \pot_j(v)  \\
  &  = - \sum_{i,j,k}  \pot_i(v) \Big( \frac12 \partial_j \ons_{ik}(v)  \pot_k(v) +  \ons_{ik}(v) \pot_{jk}(v) \Big)  F_j^\pot(v),
\end{align*}
where the last identity follows by relabeling the indices. We thus obtain the crucial identity
\begin{align}\label{eq:J-M}
  J(v) = 2 \cvx_v[\dff_v\pot,\dff_v\pot].
\end{align}
Applying the assumption \eqref{eq:cvx-e-d}, we infer that
\begin{align}\label{eq:HF}
 J(v) \geq 2\lambda\dff_v\pot[\ons_v\dff_v\pot] = 2\lambda|\partial\pot|^2(v).
\end{align}
  Now apply Gronwall's lemma to the resulting inequality 
  \[ -\frac{\dd}{\dd s}|\partial\pot|^2(v_s) \ge 2\lambda|\partial\pot|^2(v_s)\]
  to obtain \eqref{eq:fishdown}.
  Next, we verify the functional inequality \eqref{eq:logsob}.
  First, observe that
  \begin{align}
    \label{eq:help001}
    -\frac{\dd}{\dd s}\pot(v_s) = \dff_{v_s}\pot[-F_\pot(v_s)] = |\partial\pot|^2(v_s).
  \end{align}
  For any fixed $s>s'>0$, this allows to conclude that
  \begin{align*}
    \pot(v_{s'})-\pot(v_s) = \int_{s'}^s |\partial\pot|^2(v_r)\dd r
    \le |\partial\pot|^2(v_{s'})\int_{s'}^s e^{-2\lambda(r-s')}\dd r,
  \end{align*}
  which implies that
  \begin{align*}
    2\lambda(\pot(v_{s'})-\pot(v_s)) \le (1-e^{-2\lambda(s-s')})|\partial\pot|^2(v_{s'}).    
  \end{align*}
  In the limit $s\to\infty$, we have $\pot(u_s)\to\pot(\bar u)=0$, 
  and thus we end up with
  \begin{align*}
    2\lambda\pot(u_{s'}) \le |\partial\pot|^2(u_{s'}),
  \end{align*}
  which verifies \eqref{eq:logsob}.
  To prove \eqref{eq:entdown},
  simply combine \eqref{eq:help001} with \eqref{eq:logsob} and apply Gronwall's lemma again.
  Finally, the estimate \eqref{eq:fishbyent} is a consequence of the following calculation,
  using that $s\mapsto|\partial\pot|^2(v_s)$ is a monotone function thanks to \eqref{eq:fishdown}:
  \begin{align*}
    \frac{e^{2\lambda s}-1}{2\lambda}|\partial\pot|^2(v_s) 
  &  = \int_0^s e^{2\lambda(s - s')}\dd s'\,|\partial\pot|^2(v_s)
    \le \int_0^s |\partial\pot|^2(v_{s'})\dd s'
 \\&   = -\int_0^s \frac{\dd}{\dd r}\bigg|_{r=s'}\pot(v_r)\dd s'
    = \pot(v_0)-\pot(v_s).
  \end{align*}
  By non-negativity of $\pot$, we arrive at \eqref{eq:fishbyent}.
\end{proof}

\subsection{Estimates on the flow of the dissipation functional}
We continue to assume that \eqref{eq:cvx-e-d} holds with some $\lambda>0$.
We also assume the normalization $\pot(\bar u)=0$, with $\bar u$ being the global minimizer.
Below, we study another gradient flow, namely the one generated by the dissipation $\aux=|\partial\pot|^2$,
\begin{align}
  \label{eq:agf}
  \dot u = F_\aux(u) = -\ons_u\dff_u\aux .
\end{align}
In general, no information is available on the convexity of the flow induced by $F_\aux$.
Still, the following analogue of Proposition \ref{prp:be} holds, 
thanks to the intimate relation of $\aux$ to the $\lambda$-convex functional $\pot$.
\begin{prp}[Gradient flow estimates for $\Psi$]
  \label{prp:mms}
  Along any solution $(u_t)_{t>0}$ of the auxiliary gradient flow \eqref{eq:agf}, 
  we have, for arbitrary $t\ge t'\ge0$,
  \begin{align}
    \label{eq:entddown}
    \pot(u_t) &\le \pot(u_{t'})e^{-(2\lambda)^2(t-t')}, \\
    \label{eq:fishddown}
    |\partial\pot|^2(u_t) &\le |\partial\pot|^2(u_{t'})e^{-(2\lambda)^2(t-t')},
  \end{align}  
  and further, for arbitrary $t>0$,
  \begin{align}
    \label{eq:fishbyent2}
    |\partial\pot|^2(u_t)
    \le \pot(u_0)\,\frac{2\lambda}{e^{(2\lambda)^2 t} - 1}
    \le\pot(u_0)\,(2\lambda t)^{-1}.
  \end{align}
\end{prp}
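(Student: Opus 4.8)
The plan is to transfer the three facts established for the $\pot$-flow in Proposition~\ref{prp:be} to the auxiliary flow \eqref{eq:agf}, systematically exploiting the identity $\aux=|\partial\pot|^2$. The pivotal observation is that the scalar quantity $J$ from \eqref{eq:J}, originally introduced as the dissipation of $\aux$ along the flow of $\pot$, is in fact a well-defined function on $\mf$, namely $J(v)=\dff_v\aux[\ons_v\dff_v\pot]$, and that the symmetry of the Onsager operator rewrites it as $J(v)=\dff_v\pot[\ons_v\dff_v\aux]$. Differentiating $\pot$ along a solution $u$ of \eqref{eq:agf} then produces exactly this expression, so that $-\tfrac{\dd}{\dd t}\pot(u_t)=\dff_{u_t}\pot[\ons_{u_t}\dff_{u_t}\aux]=J(u_t)$. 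Thus the very same $J$ that drove the analysis of the $\pot$-flow now governs the decay of $\pot$ along the $\aux$-flow.

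Granting this, \eqref{eq:entddown} follows immediately: the lower bound $J\ge2\lambda\aux$ from \eqref{eq:HF}, combined with the functional inequality \eqref{eq:logsob} in the form $2\lambda\pot\le\aux$, gives $-\tfrac{\dd}{\dd t}\pot(u_t)\ge(2\lambda)^2\pot(u_t)$, and Gronwall's lemma yields the claimed rate $(2\lambda)^2$.

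The estimate \eqref{eq:fishddown} is where the real work lies. Along \eqref{eq:agf} one has $-\tfrac{\dd}{\dd t}\aux(u_t)=|\partial\aux|^2(u_t)$, so it suffices to prove the second-order functional inequality $(2\lambda)^2\aux\le|\partial\aux|^2$ and then apply Gronwall once more. The hard part will be producing this inequality, and my plan is to extract it from $J$ by Cauchy--Schwarz in the Onsager metric: since $J(v)=\langle\ons_v\dff_v\pot,\ons_v\dff_v\aux\rangle_v$, bounding by the product of norms gives $J(v)\le\sqrt{\aux(v)}\,\sqrt{|\partial\aux|^2(v)}$, using $\|\ons_v\dff_v\pot\|_v^2=|\partial\pot|^2(v)=\aux(v)$ and $\|\ons_v\dff_v\aux\|_v^2=|\partial\aux|^2(v)$. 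Feeding in the lower bound $J(v)\ge2\lambda\aux(v)$ and cancelling a factor $\sqrt{\aux(v)}$ (harmless where $\aux>0$, and trivially true at the minimizer, where both sides vanish) yields $(2\lambda)^2\aux\le|\partial\aux|^2$.

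It remains to handle \eqref{eq:fishbyent2}, which I would prove in direct analogy with \eqref{eq:fishbyent}. Using the monotonicity $\aux(u_r)\ge\aux(u_t)e^{(2\lambda)^2(t-r)}$ from \eqref{eq:fishddown} together with $2\lambda\int_0^t\aux(u_r)\dd r\le\int_0^tJ(u_r)\dd r=\pot(u_0)-\pot(u_t)\le\pot(u_0)$, an elementary integration gives $\aux(u_t)\,\tfrac{e^{(2\lambda)^2t}-1}{2\lambda}\le\pot(u_0)$, and the bound $e^x-1\ge x$ converts this into the second inequality. Throughout, the only genuinely new ingredient beyond Proposition~\ref{prp:be} is the dual role of $J$ --- driving both the decay of $\pot$ along the $\aux$-flow and, via Cauchy--Schwarz, the second-order inequality for $\aux$ itself --- so once that is in place the remaining estimates reduce to routine Gronwall and integration arguments mirroring the previous proposition.
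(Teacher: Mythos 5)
Your proposal is correct and follows essentially the same route as the paper: the same identification of $J$ via symmetry of the Onsager operator, the same chain $J\ge2\lambda\aux\ge(2\lambda)^2\pot$ for \eqref{eq:entddown}, the same Cauchy--Schwarz argument (the paper squares $J$ and divides rather than taking square roots, which is the identical estimate) to get $|\partial\aux|^2\ge(2\lambda)^2\aux$ for \eqref{eq:fishddown}, and the same integration using monotonicity for \eqref{eq:fishbyent2}. No gaps.
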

This result has been proven in the setting of metric spaces in \cite[Section 3]{MMS}.
As for Proposition \ref{prp:be}, we sketch a proof here
which only uses the inequality \eqref{eq:cvx-e-d} and some elementary calculations.
\begin{proof}
  We start by estimating the decay of $\pot(u_t)$ in time, i.e.,
  \begin{align*}
    J(u_t):=-\frac{\dd}{\dd t}\pot(u_t).
  \end{align*}
  Observe that, thanks to the symmetry of the Onsager operator,
  \begin{align*}
    J(u) = \dff_u\pot[-F_\aux(u)] 
    = \dff_u\pot\big[\ons_u\dff_u\aux\big]
    = \dff_u\aux\big[\ons_u\dff_u\pot\big]
    = \dff_u\aux\big[-F_\pot(u)],
  \end{align*}
  thus the $J$ defined above coincides with the $J$ defined in \eqref{eq:J}.
  From the inequality \eqref{eq:HF} in combination with the inequality \eqref{eq:logsob},
  it follows that
  \begin{align}
    \label{eq:interm}
    J(u) \ge 2\lambda \aux(u) \ge (2\lambda)^2 \pot(u).
  \end{align}
  Another application of Gronwall's lemma yields \eqref{eq:entddown}.
  In preparation for the proof of \eqref{eq:fishddown}, 
  observe that the Cauchy-Schwarz inequality for the scalar product $\langle\cdot,\cdot\rangle_u$
  translates into the following inequality for the Onsager operator $\ons_u$:
  \begin{align*}
    (p[\ons_uq])^2 \le p[\ons_up]\,q[\ons_uq] \qtext{for all $p,q\in\tg_u^\star\mf$.}
  \end{align*}
  In combination with the estimate \eqref{eq:HF}, we obtain
  \begin{align*}
    (2\lambda)^2\big(\dff_u\pot[\ons_u\dff_u\pot]\big)^2
    \le J(u)^2 = \big(\dff_u\aux[\ons_u\dff_u\pot]\big)^2
    \le \dff_u\aux[\ons_u\dff\aux]\,\dff_u\pot[\ons_u\dff_u\pot].
  \end{align*}
  Division by $\dff_u\pot[\ons_u\dff_u\pot]$ leads to
  \begin{align}\label{eq:interm-2}
    I(u):=\dff_u\aux[\ons_u\dff_u\aux]
    \ge (2\lambda)^2\dff_u\pot[\ons_u\dff_u\pot]
    = (2\lambda)^2\aux(u).
  \end{align}
  Since $I(u_t) = -\frac{\dd}{\dd t}\aux(u_t)$, we obtain \eqref{eq:fishddown} by yet another application of 
  Gronwall's lemma.
  For the proof of \eqref{eq:fishbyent2}, 
  we use the inequality \eqref{eq:fishddown} 
  and the first inequality from \eqref{eq:interm}.
  We thus obtain
  \begin{align*}
    \frac{e^{(2\lambda)^2 t}-1}{(2\lambda)^2}|\partial\pot|^2(u_t) 
    &= \int_0^t e^{(2\lambda)^2(t - t')} |\partial\pot|^2(u_t) \dd t'
    \leq \int_0^t  |\partial\pot|^2(u_{t'}) \dd t'
    \\
    &\le \frac1{2\lambda} \int_0^t J(u_{t'})\dd t'
    = -\frac1{2\lambda} \int_0^t \frac{\dd}{\dd r}\bigg|_{r=t'}\pot(u_r)\dd t'
    = \frac{\pot(u_0)-\pot(u_t)}{2\lambda},
  \end{align*}
  from which the first inequality in \eqref{eq:fishbyent2} follows since $\pot$ is non-negative. The second inequality is elementary.
\end{proof}

\subsection{Application: asymptotics for the Fokker-Planck and QDD equation}
\label{sct:formal}
To conclude our short review on gradient flows with \eqref{eq:cvx-e-d}, we show how the the estimates \eqref{eq:fokkerdecay}--\eqref{eq:auxiliary} on the long-time asymptotics
for solutions to \eqref{eq:fokker} and \eqref{eq:dlss}, respectively, can be obtained from Propositions \ref{prp:be} and \ref{prp:mms} above,
at least formally.
For the \emph{rigorous} derivation of the stated long-time asymptotics by variational methods,
we refer the reader to \cite{AGS} and to \cite{MMS}.

We consider the set $\prb(\cube)$ of strictly positive probability densities $u:\cube\to\setR_+$,
endowed with the $L^2$-Wasserstein metric, as Riemannian manifold $\mf$.
Tangent and cotangent vectors at $u\in\mf$ are identified with functions $\xi,p\in L^2(\cube)$ of vanishing mean,
their pairing being given by
\[ p[\xi] = \intcube p(x)\xi(x)\dd x. \]
The definition of the scalar product on the tangent spaces is intricate 
(it requires the solution of an auxiliary elliptic problem),
but the associated Onsager operator $\ons_u$ has an explicit form:
\begin{align}
  \label{eq:ons}
  \ons_up = -\nabla\cdot(u\nabla p).
\end{align}
In this framework, the Fokker-Planck equation \eqref{eq:fokker} can be written
as the gradient flow of the entropy $\ent$ from \eqref{eq:entfish}:
\begin{align}
  \label{eq:KDH}
  \partial_sv_s = \Delta_\pi v_s
  \qtextq{with}
  \Delta_\pi v = -\ons_v\dff_v\ent = \nabla\cdot\big(v\nabla\log(v/\pi)\big) = \Delta v + \nabla\cdot(v\nabla V).
\end{align}
This representation has been the starting point for the existence proof in the celebrated work \cite{JKO}.

Next, by the results of McCann \cite{McC},
the $\lambda$-convexity of the potential $V$ implies $\lambda$-convexity of this gradient flow;
see also \cite{DS} for an alternative proof of this fact using the formalism developed above.
Proposition \ref{prp:be} immediately yields the convergence properties stated in \eqref{eq:fokkerdecay}
as well as the regularization estimate \eqref{eq:auxiliary}.

We proceed to analyze \eqref{eq:dlss}.
To begin with, let us rewrite --- by integration by parts --- the Fisher information $\fish$ from \eqref{eq:entfish} 
with the help of $\Delta_\pi$ introduced in \eqref{eq:KDH}:
\begin{align}
  \label{eq:fishnew}
  \fish(w) = - \intcube \log(w/\pi) \,\nabla\cdot(w\nabla(w/\pi))\dd x
  = - \intcube \log(w/\pi)\,\Delta_\pi w \dd x.
\end{align}
From this representation, it is immediate to deduce the relation \eqref{eq:miracle} between entropy and Fisher information, 
i.e., that
\begin{align*}
  |\partial\ent|^2(w) = - \dff_w\ent[\Delta_\pi w] = \fish(w).
\end{align*}
Next, we use \eqref{eq:fishnew} to compute the first variation of $\fish$:
\begin{align}
  \label{eq:prediscrete}
  \dff_u\fish[\xi] 
  = -\intcube \big[(\xi/u)\,\Delta_\pi u + \log(u/\pi)\,\Delta_\pi\xi\big]\dd x
  = -\intcube \left[ \frac{\Delta_\pi u}{u} + \Delta_\pi^\star\log(u/\pi) \right]\xi\dd x,
\end{align}
where $\Delta_\pi^\star$ is the $L^2({\rm d}x)$-adjoint of $\Delta_\pi$,
that is
\begin{align*}
  \Delta_\pi^\star\log(u/\pi)
  = \Delta_\pi^\star\log u + \Delta_\pi^\star V
  = \Delta\log u - \frac{\nabla V\cdot\nabla u}u + \Delta V - |\nabla V|^2.
\end{align*}
We thus obtain
\begin{align*}
  \dff_u\fish[\xi] 
  = - \intcube \left[ \frac{\Delta u}{u} + \Delta\log u + 2\Delta V-|\nabla V|^2 \right] 
  \xi\dd x.
\end{align*}
From this and the relation \eqref{eq:VW} between $V$ and $W$,
it is obvious that \eqref{eq:dlss} can be written as the gradient flow of $\fish$:
\begin{align*}
  \partial_tu_t = F_\fish(u_t) \qtextq{with} 
  F_\fish(u) = - \ons_u\dff_u\fish = - \nabla\cdot\left(u\nabla \left[ \frac{\Delta u}{u} + \Delta\log u - W \right]\right).
\end{align*}
\begin{rmk}
  For later reference, we point out that in view of \eqref{eq:prediscrete},
  the equation \eqref{eq:dlss} can be equivalently written in the form
  \begin{align}
    \label{eq:dlss2}
    \partial_t u = \ons_u\left(\frac{\Delta_\pi u}{u} + \Delta_\pi^\star\log(u/\pi)\right).
  \end{align}
  This is the representation which naturally appears after discretization, see \eqref{eq:ddlss2} below.
\end{rmk}
In combination, this means that Proposition \ref{prp:mms} applies to solutions $u_t$ of \eqref{eq:dlss}.
The respective estimates \eqref{eq:entddown} and \eqref{eq:fishddown} turn into \eqref{eq:dlssdecay},
and \eqref{eq:fishbyent2} becomes \eqref{eq:auxiliary}.

%%%%%%%%%%%%%%%%%%%%%%%%%%%%%%%%%%%%%%%%%%%%%%%%%%%%%%%%%%%%%%%%%%%%%%%%%%%%%

\section{Discretization of the Fokker-Planck equation}
\label{sct:dfokker}

\subsection{Finite volume discretization}
For given $N\in\setN$, define the length parameter $h:=1/N$,
and introduce the $d$-dimensional cubic lattice of side length $N$,
\[ J^h:=\{1,\ldots,N\}^d\subset\mathbb{Z}^d.\]
Multi-indices in $J^h$ are denoted by $\bi$ and $\bj$, 
and we write $\nbh\bi\bj$ if $\bi$ and $\bj$ are neighbors, i.e., $|\bi-\bj|=1$.
Intuitively, each $\bj\in J^h$ labels a subcube
\[ \omega_\bj := \big(h(j_1-1),hj_1\big)\times\cdots\times\big(h(j_d-1),hj_d\big)\subset\cube\]
of side length $h$ in $\cube$,
and each vector $U\in\setR^{J_h}$ is associated to a function $u^h\in L^\infty(\cube)$
that is piecewise constant on each $\omega_\bj$:
\[ u^h(x) = U_\bj \qtext{for all $x\in\omega_\bj$.} \]
In this spirit, we refer to
\[\prb(J^h) := \left\{U\in\setR_+^{J^h}\,;\,h^d\sum_{\bj\in J^h}U_\bj=1\right\}\]
as the space of positive probability densities on $J^h$;
indeed, for each $U\in\prb(J^h)$,
\[ \intcube u^h(x)\dd x = h^d\sum_{\bj\in J^h}U_\bj = 1. \]
Both vectors $\Xi\in\tg_U\prb(J^h)$ and cotangent vectors $P\in\tg_U^\star\prb(J^h)$
are identified with elements in $\setR^{J^h}$ of vanishing mean,
\[ h^d\sum_{\bi}P_\bi = 0, \quad h^d\sum_{\bj}\Xi_\bj = 0,\]
and their pairing is given by
\[ P[\Xi] = h^d\sum_{\bj\in J^h}P_\bj\Xi_\bj.\]
Next, we introduce a discrete approximation $\Pi^h\in\prb(J^h)$ of the steady state $\pi$ from \eqref{eq:steady}.
First, define vectors $V^{[1],h},\ldots,V^{[d],h}\in\setR^N$ by
\begin{align}
  \label{eq:dVk}
  V^{[k],h}_j = \frac1h\int_{h(j-1)}^{hj}V^{[k]}(r)\dd r,
\end{align}
and accordingly $\Pi^{[1],h},\ldots,\Pi^{[d],h}\in\prb(\{1,\ldots,N\})$ by
\begin{align}
  \label{eq:dPik}
  \Pi^{[k],h}_j = \frac1{Z^{[k],h}}\exp\big(-V^{[k],h}_j\big),
\end{align}
with the appropriate choice of the normalization constant $Z^{[k],h}>0$.
Now, $\Pi^h\in\prb(J^h)$ itself is defined such that it inherits the product structure \eqref{eq:factorize}:
\begin{align}
  \label{eq:dfactor}
  \Pi^h_\bj = \Pi_{j_1}^{[1],h}\cdots\Pi_{j_d}^{[d],h}.
\end{align}
Since $V$ is smooth, the respective piecewise constant densities $\pi^h$ 
converge to $\pi$ uniformly on $\cube$ as $h\downarrow0$.
\begin{lem}
  \label{lem:dpi}
  The piecewise constant representation $\pi^h\in\prb(\cube)$ with respective values $\Pi^h_\bj$ on the cubes $\omega_\bj$
  is the unique minimizer of $\ent$ on the subspace of piecewise constant densities in $\prb(\cube)$.
  Moreover,
  \begin{align}
    \label{eq:gammah}
    \gamma^h := \ent(\pi^h) =  \log\frac{Z^{[1]}\cdots Z^{[d]}}{Z^{[1],h}\cdots Z^{[d],h}}.
  \end{align}
\end{lem}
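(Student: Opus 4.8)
The plan is to establish two claims: first, that $\pi^h$ (equivalently the vector $\Pi^h$) is the unique minimizer of $\ent$ over the convex set of piecewise constant densities, and second, to compute the minimal value $\ent(\pi^h)$ explicitly as in \eqref{eq:gammah}. Since $\ent(w)=\intcube w\log(w/\pi)\dd x$ is the relative entropy with respect to $\pi$, I expect this to reduce to a standard Gibbs-variational / Jensen-type argument, adapted to the piecewise constant setting where $\pi$ itself is replaced by its "correct" piecewise constant proxy $\pi^h$.

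First I would restrict $\ent$ to a piecewise constant density with values $U_\bj$ on $\omega_\bj$ and carry out the integration over each subcube explicitly. On $\omega_\bj$ the function $w\log(w/\pi)$ equals $U_\bj\log U_\bj - U_\bj\log\pi$, and integrating the second term against $\dd x$ produces $U_\bj$ times $-\int_{\omega_\bj}\log\pi\dd x$. Using the product structure \eqref{eq:factorize} and the definition \eqref{eq:dVk}, one sees that $\frac1{h^d}\int_{\omega_\bj}(-\log\pi)\dd x = V^{[1],h}_{j_1}+\cdots+V^{[d],h}_{j_d}+\log(Z^{[1]}\cdots Z^{[d]})$, which by \eqref{eq:dPik} and \eqref{eq:dfactor} equals $-\log\Pi^h_\bj + \log\frac{Z^{[1]}\cdots Z^{[d]}}{Z^{[1],h}\cdots Z^{[d],h}}$. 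Substituting this in and summing with weight $h^d$ yields
\begin{align*}
  \ent(w) = h^d\sum_{\bj} U_\bj\log\frac{U_\bj}{\Pi^h_\bj} + \gamma^h,
\end{align*}
where $\gamma^h$ is precisely the logarithmic ratio of normalization constants. This is the key reduction: $\ent$ restricted to piecewise constant densities equals the \emph{discrete} relative entropy of $U$ with respect to $\Pi^h$, plus the constant $\gamma^h$.

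Given this identity, the minimization is immediate. The discrete relative entropy $h^d\sum_\bj U_\bj\log(U_\bj/\Pi^h_\bj)$ is non-negative, by the discrete Jensen inequality (or equivalently strict convexity of $r\mapsto r\log r$), and vanishes if and only if $U_\bj=\Pi^h_\bj$ for all $\bj$, using that both $U$ and $\Pi^h$ lie in $\prb(J^h)$ and hence have the same total mass. Strict convexity gives uniqueness of the minimizer. Evaluating at the minimizer leaves exactly $\ent(\pi^h)=\gamma^h$, which proves both the minimizer claim and formula \eqref{eq:gammah} simultaneously.

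The only genuinely delicate step is the bookkeeping in the integration: one must correctly account for the normalization constants $Z^{[k]}$ appearing in the continuous $\pi$ versus the discrete constants $Z^{[k],h}$ appearing in $\Pi^h$, so that the leftover constant collapses exactly to $\gamma^h$ rather than some spurious quantity. Concretely, the subtlety is that $-\int_{\omega_\bj}\log\pi\dd x$ naturally produces the continuous constant $\log(Z^{[1]}\cdots Z^{[d]})$ while re-expressing the integral in terms of $\Pi^h_\bj$ introduces the discrete constant $\log(Z^{[1],h}\cdots Z^{[d],h})$ with the opposite sign; the $\log$ of the ratio is what survives. I would verify this cancellation carefully using \eqref{eq:dVk}--\eqref{eq:dfactor}, treating each coordinate factor separately so that the product structure makes the separation of the $d$ one-dimensional normalizations transparent. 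Everything else is routine convexity.
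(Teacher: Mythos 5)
Your proposal is correct and follows essentially the same route as the paper: expand $\ent$ over the subcubes using \eqref{eq:decompose} and \eqref{eq:dVk}--\eqref{eq:dfactor} to obtain $\ent(u^h)=h^d\sum_\bj U_\bj\log(U_\bj/\Pi^h_\bj)+\gamma^h$, then conclude by non-negativity and strict convexity of the discrete relative entropy (the paper phrases this via Jensen's inequality applied to $r\mapsto r(\log r-1)+1$, which is the same Gibbs-type argument you invoke). The bookkeeping of the constants $Z^{[k]}$ versus $Z^{[k],h}$ that you flag as the delicate point is handled in the paper exactly as you describe.
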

This lemma justifies the definition of the discretized entropy $\ent^h$ in \eqref{eq:dmiracle}.
\begin{rmk}
  It is easily seen that $Z^{[k],h}\searrow Z^{[k]}$ for each $k=1,\ldots,d$ as $h\searrow0$.
  Hence $\gamma^h\searrow0$.
\end{rmk}
\begin{proof}
  If $u^h$ is piecewise constant on the boxes $\omega_\bj$ with respective values $U_\bj$, 
  then
  \begin{align*}
    \ent(u^h) = \intcube u^h\log(u^h/\pi)\dd x
    &= \sum_{\bj\in J^h} \int_{\omega_\bj} U_\bj\big(\log U_\bj-\log\pi\big)\dd x \\
    &= h^d\sum_{\bj\in J^h} U_\bj\left(\log U_\bj +\log Z + \frac1{h^d}\int_{\omega_\bj}V(x)\dd x\right) \\
    &= h^d\sum_{\bj\in J^h} U_\bj\log\big(U_\bj/\Pi^h_\bj\big) + \log\frac{Z^{[1]}\cdots Z^{[d]}}{Z^{[1],h}\cdots Z^{[d],h}}.
  \end{align*}
  For the last line, we have used the property \eqref{eq:decompose} of $V$, 
  which yields that
  \begin{align*}
    \frac1{h^d}\int_{\omega_\bj}V(x)\dd x
    = \frac1{h^d}\int_{\omega_\bj} \big(V^{[1]}(x_1)+\cdots+V^{[d]}(x_d)\big)\dd x
    = V^{[1],h}_{j_1} + \cdots + V^{[d],h}_{j_d},
  \end{align*}
  the property $Z=Z^{[1]}\cdots Z^{[d]}$,
  and the definition of $\Pi^h$ in \eqref{eq:dPik}\&\eqref{eq:dfactor} above.
  Since both $U^h,\Pi^h\in\prb(J^h)$, we may further write
  \begin{align*}
    \ent(u^h) = h^d\sum_{\bj\in J^h} \Pi^h_\bj\Big(1+\big(U_\bj/\Pi^h_\bj\big)\big[\log\big(U_\bj/\Pi^h_\bj\big)-1\big]\Big) 
    + \log\frac{Z^{[1]}\cdots Z^{[d]}}{Z^{[1],h}\cdots Z^{[d],h}}.
  \end{align*}
  Using that $r\mapsto r(\log r-1)+1$ is strictly convex with minimum zero attained at $r=1$, 
  Jensen's inequality implies that
  \begin{align*}
    \ent(u^h) \ge \log\frac{Z^{[1]}\cdots Z^{[d]}}{Z^{[1],h}\cdots Z^{[d],h}},
  \end{align*}
  with equality if and only if $u^h=\pi^h$.
\end{proof}

\subsection{Discretized Fokker-Planck equation}
We implicitly introduce a metric on $\prb(J^h)$ by means of 
the Onsager operator $\ons^h_U:\tg_U^\star\prb(J^h)\to\tg_U\prb(J^h)$ with
\begin{align}
  \label{eq:dons}
  P\big[\ons^h_UQ\big]
  = {h^d}\sum_{\nbh\bi\bj} \sqrt{\Pi^h_\bi\Pi^h_\bj}\Lambda_{\bi\bj}(U)\,
  \left(\frac{P_\bi-P_\bj}h\right) \left(\frac{Q_\bi-Q_\bj}h\right),
\end{align}
for all $P,Q\in\tg_U^\star\prb(J^h)$,
where the sum runs over all pairs of neighboring indices $\nbh{\bi}{\bj}$, i.e., over all edges of unit length in $J^h$,
and $\Lambda_{\bi\bj}(U)$ is an abbreviation of 
\begin{align*}
  \Lambda_{\bi\bj}(U) = \Lambda\left(\frac{U_\bi}{\Pi^h_\bi},\frac{U_\bj}{\Pi^h_\bj}\right),
\end{align*}
with the logarithmic mean $\Lambda:\setR_+\times\setR_+\to\setR_+$,
given by
\begin{align}
  \label{eq:logmean}
  \Lambda(a,b) 
  = \int_0^1 a^{1-x} b^x \,{\rm d}x = \begin{cases}\frac{a-b}{\log a-\log b} & \text{if $a\neq b$}, \\ a & \text{if $a=b$}.\end{cases}
\end{align}
It has been shown in \cite{Maas} that $\ons^h$ induces a distance on $\prb(J^h)$,
which extends to the closure $\prbnn(J^h)$ of merely non-negative probability densities.
The resulting metric space is geodesic and complete.
\begin{rmk}
  The definition \eqref{eq:dons} of the discrete Onsager operator above is consistent
  with that of the Onsager operator for the $L^2$-Wasserstein metric on $\prb(\cube)$ from \eqref{eq:ons}.
  To see this relation,
  let a smooth and positive density $u\in\prb(\cube)$ and two smooth functions $p,q:\cube\to\setR$ be given.
  For $h\downarrow0$, let $U^h\in\prb(J^h)$ and $P^h,Q^h\in\tg_{U^h}^\star\prb(J^h)$ be approximations 
  of $u$ and $p,q$ in the sense that their piecewise constant interpolations $u^h,p^h,q^h\in L^\infty(\Omega)$
  converge to the respective $u,p,q$ uniformly.
  Further, for each $\bj\in J^h$, we introduce the center $x^h_\bj$ of the $\bj$th cube,
  \begin{align*}
    x^h_\bj := \big(h (j_1-1/2),\ldots,h(j_d-1/2)\big).
  \end{align*}
  Since the values $U^h_\bi$ and $U^h_\bj$ at neighboring sites $\nbh\bi\bj$ are $O(h)$-close to each other,
  the logarithmic, geometric and arithmetic mean of $U_\bi/\Pi^h_\bi$ and $U_\bj/\Pi^h_\bj$ are $O(h)$-close to each other as well.
  Hence, we have
  \begin{align*}
    \sqrt{\Pi^h_\bi\Pi^h_\bj}\Lambda_{\bi\bj}(U^h) 
    = \sqrt{\Pi^h_\bi\Pi^h_\bj}\sqrt{\frac{U^h_\bi}{\Pi^h_\bi}\,\frac{U^h_\bj}{\Pi^h_\bj}} + O(h) 
    = \sqrt{U^h_\bi U^h_\bj}+O(h)
    = \frac12\big(u(x^h_\bi)+u(x^h_\bj)\big) + O(h)
  \end{align*}
  inside the definition \eqref{eq:dons}.
  Further, due to the square grid combinatorics of $J^h$,
  \begin{align*}
    \frac{P^h_\bi-P^h_\bj}h = (\bi-\bj)\cdot\nabla p\left(\frac{x^h_\bi+x^h_\bj}2\right) + O(h),
  \end{align*}
  and similarly for the difference quotients of $Q^h$.
  Working out the combinatorics, one obtains from the definition of the discretize Onsager operator in \eqref{eq:dons} 
  the following integral approximation:
  \begin{align*}
   & P\big[\ons^h_UQ\big]
  \\  &= \frac{h^d}2\sum_{\bj\in J^h} \left[\big(u(x^h_\bj)+O(h)\big)
      \sum_{k=1}^d\big(\ee_{x_k}\cdot\nabla p(x^h_\bj+h/2\ee_k)+O(h)\big)\,\big(\ee_k\cdot\nabla q(x^h_\bj+h/2\ee_k)+O(h)\big)\right]\\
    &= \intcube u(x)\,\nabla p(x)\cdot\nabla q(x)\dd x + O(h).
  \end{align*}
  The last expression is an approximation of the original Onsager operator from \eqref{eq:ons}.
\end{rmk}
As announced in \eqref{eq:dmiracle}, the entropy functional $\ent^h$ on $\prb(J^h)$ is
defined by restriction of the original entropy $\ent$,
\begin{align*}
  \ent^h(U) = h^d\sum_{\bj\in J_N} U_\bj\log(U_\bj/\Pi^h_\bj) -\gamma^h = \ent(u^h)-\gamma^h,
\end{align*}
where $\gamma^h$ defined in \eqref{eq:gammah} is such that 
the convex functional $\ent^h(U)$ is non-negative for all $U\in\prb(J^h)$,
and vanishes precisely for $U=\Pi^h$ given in \eqref{eq:dfactor}.
Accordingly, introduce the discretization of the Fokker-Planck operator $\Delta_\pi$ on $\prb(J^h)$ by
\begin{align}
  \label{eq:dKDH}
  \mc^hU := - \ons_U^h\dff_U\ent^h.
\end{align}
The representation as a linear operator is justified by the following.
\begin{lem}
  The discrete Fokker-Planck operator $\mc^h$ is linear 
  on the simplex $\prb(J^h)$:
  \begin{align*}
    (\mc^hU)_\bi = \sum_{\bj\in J^h}\mc^h_{\bi\bj}U_\bj \qtext{for all $U\in\prb(J^h)$,}
  \end{align*}
  with the matrix elements of $\mc^h\in\setR^{J^h\times J^h}$ being given by
  \begin{align}
    \label{eq:Qij}
    \mc^h_{\bi\bj} =
    \begin{cases}
      h^{-2}\sqrt{\Pi^h_\bi/\Pi^h_\bj} & \text{if $\nbh\bi\bj$}, \\ 
      - \displaystyle{h^{-2}\sum_{\bi':\nbh{\bi'}\bj}\sqrt{\Pi^h_{\bi'}/\Pi^h_\bj}} & \text{if $\bi=\bj$}, \\
      0 & \text{otherwise}.
    \end{cases}
  \end{align}
  Moreover, the adjoint operator $\mathcal{L}^h = (\mc^h)^*$ given by $(\mathcal{L}^h \psi)_\bi = \sum_{\bj\in J^h}\mc^h_{\bj\bi}\psi_\bj$
  is the generator of an irreducible and reversible Markov chain on $J^h$ 
  with invariant distribution $\Pi^h$.
\end{lem}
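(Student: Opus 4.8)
The plan is to compute $\mc^h U = -\ons^h_U\dff_U\ent^h$ explicitly and to recognize that the nonlinearity entering through $\Lambda_{\bi\bj}(U)$ is cancelled exactly by the logarithmic structure of $\dff_U\ent^h$. First I would determine the cotangent vector $\dff_U\ent^h$. Differentiating $\ent^h(U)=h^d\sum_\bj U_\bj\log(U_\bj/\Pi^h_\bj)-\gamma^h$ in the ambient linear structure gives $\partial_{U_\bj}\ent^h = h^d\big(\log(U_\bj/\Pi^h_\bj)+1\big)$; since pairing against a mean-zero tangent vector annihilates the additive constant, I may use the representative $(\dff_U\ent^h)_\bj=\log(U_\bj/\Pi^h_\bj)$.

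Substituting $P=\dff_U\ent^h$ into the bilinear form \eqref{eq:dons} is the crux of the whole computation. The key observation is that by \eqref{eq:logmean} the logarithmic mean satisfies $\Lambda(a,b)\,(\log a-\log b)=a-b$. Applied to $a=U_\bi/\Pi^h_\bi$ and $b=U_\bj/\Pi^h_\bj$, this turns the factor $\Lambda_{\bi\bj}(U)\,(P_\bi-P_\bj)$ into $U_\bi/\Pi^h_\bi-U_\bj/\Pi^h_\bj$, which is \emph{linear} in $U$. Hence, for every $Q\in\tg_U^\star\prb(J^h)$,
\[
  Q[\mc^h U] = -h^{d-2}\sum_{\nbh\bi\bj}\sqrt{\Pi^h_\bi\Pi^h_\bj}\,(Q_\bi-Q_\bj)\Big(\frac{U_\bi}{\Pi^h_\bi}-\frac{U_\bj}{\Pi^h_\bj}\Big),
\]
which establishes linearity of $\mc^h$. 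To read off the matrix elements I would rewrite the symmetric edge sum as a sum over ordered pairs and collect the coefficient of each $Q_\bi$, using that $\sqrt{\Pi^h_\bi\Pi^h_\bj}$ is symmetric; this yields $(\mc^h U)_\bi = -h^{-2}\sum_{\bj:\,\nbh\bi\bj}\big(\sqrt{\Pi^h_\bj/\Pi^h_\bi}\,U_\bi-\sqrt{\Pi^h_\bi/\Pi^h_\bj}\,U_\bj\big)$, from which \eqref{eq:Qij} follows directly. The antisymmetry of the edge sum also gives $\sum_\bi(\mc^h U)_\bi=0$, so that $\mc^h$ has vanishing column sums and maps into the tangent space, as it must.

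For the second assertion I would check that the transpose $\mathcal{L}^h$, whose matrix entries are $\mc^h_{\bj\bi}$, is a Markov generator. Its off-diagonal entries $\mc^h_{\bj\bi}=h^{-2}\sqrt{\Pi^h_\bj/\Pi^h_\bi}\ge0$ for $\nbh\bi\bj$ are admissible jump rates, and the $\bi$th row sum of $\mathcal{L}^h$ equals the (vanishing) $\bi$th column sum of $\mc^h$, so every row of $\mathcal{L}^h$ sums to zero. Irreducibility follows because the rates between neighbours are strictly positive and $J^h$ is connected. Reversibility with invariant measure $\Pi^h$ follows from the detailed balance identity $\Pi^h_\bi\,\mc^h_{\bj\bi}=h^{-2}\sqrt{\Pi^h_\bi\Pi^h_\bj}=\Pi^h_\bj\,\mc^h_{\bi\bj}$, which is manifestly symmetric in $\bi$ and $\bj$.

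The main obstacle is really only the first step: spotting the logarithmic-mean cancellation and then converting the quadratic form into matrix entries without sign or index errors. Everything afterwards --- non-negativity of the rates, vanishing row sums, connectivity and detailed balance --- is a routine verification.
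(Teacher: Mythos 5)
Your proposal is correct and follows essentially the same route as the paper: the same logarithmic-mean cancellation $\Lambda(a,b)(\log a-\log b)=a-b$ applied to $P=\dff_U\ent^h$ in the bilinear form \eqref{eq:dons}, the same summation-by-parts to extract the matrix entries \eqref{eq:Qij}, and the same routine checks of non-negative off-diagonal rates, vanishing column sums, connectivity for irreducibility, and detailed balance $\mc^h_{\bi\bj}\Pi^h_\bj=\mc^h_{\bj\bi}\Pi^h_\bi$ for reversibility. No gaps.
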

\begin{proof}
  First observe that, at each $U\in\prb(J^h)$,
  \begin{align*}
    \dff_U\ent^h[\Xi] = h^d\sum_{\bj\in J^h} \big(1+\log(U_\bj/\Pi^h_\bj)\big)\Xi_\bj
    \qtext{for all $\Xi\in\tg_U\prb(J^h)$.}
  \end{align*}
  Thus, by definition of $\Lambda$, we have for each $P\in\tg_U^\star\prb(J^h)$:
 \begin{align*}
    P\big[-\ons^h_U\dff_U\ent^h\big]
    &= -{h^d}\sum_{\nbh\bi\bj}\sqrt{\Pi^h_\bi\Pi^h_\bj}\Lambda_{\bi\bj}(U)
    \left(\frac{\log(U_\bi/\Pi^h_\bi)-\log(U_\bj/\Pi^h_\bj)}h\right)\left(\frac{P_\bi-P_\bj}h\right) \\
    &= -{h^d}\sum_{\nbh\bi\bj}\sqrt{\Pi^h_\bi\Pi^h_\bj}
    \left(\frac{U_\bi/\Pi^h_\bi-U_\bj/\Pi^h_\bj}h\right)\left(\frac{P_\bi-P_\bj}h\right) \\
    &=  h^{d-2}\sum_{\bi}P_\bi\left[\sum_{\bj:\nbh\bi\bj}\left(\sqrt{\frac{\Pi^h_\bi}{\Pi^h_\bj}}U_\bj-\sqrt{\frac{\Pi^h_\bj}{\Pi^h_\bi}}U_\bi\right)\right].
  \end{align*}
  This shows the linearity of the operator in \eqref{eq:dKDH},
  and yields the representation \eqref{eq:Qij}.

  $\mc^h$ being the adjoint generator of a Markov chain means that 
  all of its off-diagonal entries are non-negative, and that the column sums vanish.
  Both properties are immediately verified by inspection of \eqref{eq:Qij}.
  Irreducibility means that for any two indices $\bi_*,\bi^*$, 
  one finds a chain $(\bi_m)_{m=0,\ldots,M}$ of indices $\bi_m\in J^h$ with $\bi_0=\bi_*$ and $\bi_M=\bi^*$
  such that $\mc^h_{\bi_m\bi_{m-1}}>0$ for all $m=1,\ldots,M$.
  Since $\mc^h_{\bi\bj}>0$ whenever $\nbh\bi\bj$, 
  one may take for $(\bi_m)_{m=0,\ldots,M}$ any chain with $\nbh{\bi_{m-1}}{\bi_m}$ connecting $\bi_*$ with $\bi^*$.
  For reversibility, we need to verify the detailed balance condition
  \begin{align}
    \label{eq:balance}
    \mc^h_{\bi\bj}\Pi^h_\bj = \mc^h_{\bj\bi}\Pi^h_\bi \qtext{for all $\bi,\bj\in J^h$.}
  \end{align}
  This again is an immediate consequence of the representation \eqref{eq:Qij}.
  Note that \eqref{eq:balance} together with the Markov property implies $\mc^h\Pi^h=\Pi^h$,
  i.e., $\Pi^h$ is indeed an (in fact: the unique) invariant distribution.
\end{proof}

\subsection{$\lambda$-contractivity of the Fokker-Planck flow}
The goal of this section is to prove:
\begin{prp}
  \label{prp:dfokkerconvex}
  The flow $F_{\ent^h}$ from \eqref{eq:dKDH}
  satisfies \eqref{eq:cvx-e-d}
  with
  \begin{align}
    \label{eq:lambdah}
    \lambda^h = \frac2{h^2}\left( 1-\exp\left(-\frac{h^2}2\lambda\right)\right) = \lambda + O(h^2).
  \end{align}
\end{prp}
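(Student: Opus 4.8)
The plan is to verify the convex decay inequality \eqref{eq:cvx-e-d} for $\pot=\ent^h$ by a direct ``Eulerian'' Bakry-\'Emery computation, organized so as to exploit the product structure of $\Pi^h$. Writing $\rho_\bj=U_\bj/\Pi^h_\bj$, the covector $\dff_U\ent^h$ is represented (modulo an irrelevant additive constant) by $\log\rho$, and the gradient flow $\partial_sU=\mc^hU$ of $\ent^h$ is linear. By the identity \eqref{eq:J-M} from the proof of Proposition \ref{prp:be}, the inequality \eqref{eq:cvx-e-d} is equivalent to the dissipation estimate
\begin{align*}
  J(U) := -\frac{\dd}{\dd s}\Big|_{s=0}\fish^h(U_s) \ge 2\lambda^h\,\fish^h(U),
\end{align*}
where $\fish^h=|\partial\ent^h|^2$ is the discrete Fisher information. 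Using $\Lambda(a,b)(\log a-\log b)=a-b$ from \eqref{eq:logmean}, the latter reads $\fish^h(U)=h^{d-2}\sum_{\nbh\bi\bj}\sqrt{\Pi^h_\bi\Pi^h_\bj}\,(\rho_\bi-\rho_\bj)(\log\rho_\bi-\log\rho_\bj)$.

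First I would record the product structure. Since $\Pi^h$ factorizes as in \eqref{eq:factorize}, the ratio $\sqrt{\Pi^h_\bi/\Pi^h_\bj}$ attached to a unit edge $\nbh\bi\bj$ in coordinate direction $k$ depends only on the $k$-th factor, so by \eqref{eq:Qij} the generator decomposes as $\mc^h=\sum_{k=1}^d\mc^{h,[k]}$ into commuting one-dimensional birth-death generators, and both the Onsager form \eqref{eq:dons} and $\fish^h$ split into sums over edges grouped by direction. I would then expand $J(U)=2\cvx_U[\log\rho,\log\rho]$ using the definition of $\cvx_U$ and the linearity of $F_{\ent^h}=\mc^h$, which gives $\cvx_U[\log\rho,\log\rho]=-\log\rho[\mc^h\ons^h_U\log\rho]+\tfrac12\log\rho[\dff_U\ons^h[\mc^hU]\log\rho]$. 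Sorting the resulting terms by the directions of the two edges involved, the \emph{diagonal} contributions (both edges on a common coordinate line) reproduce, line by line, the one-dimensional Bakry-\'Emery expression, whereas the \emph{mixed} contributions (edges in distinct directions) are to be shown non-negative; this non-negativity is the discrete manifestation of the flatness of the cubic lattice and is where the product form is genuinely used.

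The core is then the one-dimensional estimate along a single coordinate line. Here I would reduce the diagonal part of $J(U)$ to a sum of per-triple contributions indexed by consecutive sites $j-1,j,j+1$, each of which is, after using the detailed balance relation \eqref{eq:balance} and the explicit partial derivatives of the logarithmic mean \eqref{eq:logmean}, a function of $\rho_{j-1},\rho_j,\rho_{j+1}$ and of the single geometric factor
\begin{align*}
  \theta_j := \frac{\sqrt{\Pi^{[k],h}_{j-1}\Pi^{[k],h}_{j+1}}}{\Pi^{[k],h}_j} = \exp\!\Big(-\tfrac12\big(V^{[k],h}_{j+1}-2V^{[k],h}_j+V^{[k],h}_{j-1}\big)\Big).
\end{align*}
The convexity input enters precisely at this point: $\lambda$-convexity of $V$ together with the decomposition \eqref{eq:decompose} forces $(V^{[k]})''\ge\lambda$, and averaging the second difference over a box as in \eqref{eq:dVk} yields $V^{[k],h}_{j+1}-2V^{[k],h}_j+V^{[k],h}_{j-1}\ge\lambda h^2$, hence $\theta_j\le e^{-\lambda h^2/2}$. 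Feeding this bound, together with the elementary $h^{-2}$ scaling of the rates, into the per-triple estimate produces exactly the effective rate $\lambda^h=\tfrac{2}{h^2}(1-e^{-\lambda h^2/2})$ of \eqref{eq:lambdah}.

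I expect the main obstacle to be the one-dimensional per-triple estimate: after the reductions above one must establish a pointwise inequality in the three positive variables $\rho_{j-1},\rho_j,\rho_{j+1}$ asserting that the local $\Gamma_2$-type quantity dominates $\lambda^h$ times the local $\Gamma$-type (Fisher) quantity, uniformly for $\theta_j\le e^{-\lambda h^2/2}$. This reduces to a delicate convexity and monotonicity property of the logarithmic mean $\Lambda$ and its derivatives, and is the genuinely non-trivial algebraic step; by contrast, the sign of the mixed terms and the passage to arbitrary $d$ are comparatively routine consequences of the product structure, and could alternatively be deferred to the tensorization argument of Section~\ref{sct:tensor}.
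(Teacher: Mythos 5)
Your architecture coincides with the paper's: reduce to one-dimensional birth--death chains via the product structure of $\Pi^h$ (the paper's \eqref{eq:decompQ} and \eqref{eq:decompK}), feed in the $\lambda$-convexity of $V^{[k]}$ through the second-difference bound on the cell averages --- which is exactly the paper's estimate \eqref{eq:rootest} and is indeed the sole source of the constant $\lambda^h$ in \eqref{eq:lambdah} --- and dispose of the cross-directional terms by non-negativity, which is the tensorisation Theorem~\ref{thm:tensorisation} resting on \eqref{eq:off-sign}--\eqref{eq:on-diag}. Up to that point your reconstruction is faithful, and your decision to defer the mixed terms to Section~\ref{sct:tensor} is precisely what the paper does.

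The gap is the step you yourself flag as ``the genuinely non-trivial algebraic step'': the one-dimensional inequality is asserted, not proved. You propose to establish a pointwise per-triple inequality in $(\rho_{j-1},\rho_j,\rho_{j+1})$ governed by the single factor $\theta_j$, but no such clean localization is available: expanding $\cvx$ for a birth--death chain produces a $\Gamma_2$-type expression involving the rate differences $(\alpha_j-\alpha_{j+1})+(\beta_j-\beta_{j-1})$ --- which bring in the geometric factors at \emph{neighboring} sites together with the absolute rates --- and the partial derivatives $\Lambda^1,\Lambda^2$ of the logarithmic mean evaluated at adjacent pairs. Showing that this dominates $\lambda^h$ times the local Fisher term is the main technical content of Caputo--Dai Pra--Posta, and the paper does not attempt it directly: it verifies the monotonicity hypotheses \eqref{eq:abmonotone} (a consequence of \eqref{eq:rootest}) and then invokes \cite[Theorem 3.1]{CDPP09}, which yields \eqref{eq:cvx-e-d} for every $\lambda\le\lambda^*$ as in \eqref{eq:cdpp-ineq}; the elementary bound \eqref{eq:constant} then gives $\lambda^*\ge\lambda^h$. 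Without either reproducing that argument or citing it, your proof is incomplete at its core, and the ``delicate convexity and monotonicity property of the logarithmic mean'' you would need is not a routine pointwise estimate in three variables. A correct completion along your lines would replace the per-triple claim by: verify \eqref{eq:abmonotone} from \eqref{eq:rootest}, apply \cite[Theorem 3.1]{CDPP09} to each $\mc^{[k],h}$ as in Lemma~\ref{lem:1d-cvx}, and conclude by Theorem~\ref{thm:tensorisation}.
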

This result appears to be novel for dimensions $d>1$, 
but its proof is obtained by combination of two results from the literature.
The key observation is that, in view of the factorization property \eqref{eq:dfactor},
the space $\prb(J^h)$ carries a natural tensorial structure 
that is compatible with the evolution \eqref{eq:dKDH} of the spatially discrete Fokker-Planck equation.
More precisely:
\begin{lem}
  For each pair of indices $\bi,\bj\in J^h$,
  \begin{align}
    \label{eq:decompQ}
    \mc^h_{\bi\bj} 
    = \mc^{[1],h}_{i_1j_1}\delta_{i_2j_2}\delta_{i_3j_3}\cdots\delta_{i_dj_d}
    + \delta_{i_1j_1}\mc^{[2],h}_{i_2j_2}\delta_{i_3j_3}\cdots\delta_{i_dj_d}
    + \cdots + \delta_{i_1j_1}\delta_{i_2j_2}\delta_{i_3j_3}\cdots\mc^{[d],h}_{i_dj_d}.
  \end{align}
  Here each $\mc^{[k],h}\in\setR^{N\times N}$ is a tri-diagonal matrix,
  \begin{align}
    \label{eq:Qk}
    \mc^{[k],h}_{ij} =
    \begin{pmatrix}
      -\sigma^{[k],h}_1 & \beta^{[k],h}_1 & &  &  &  \\[5pt]
      \alpha^{[k],h}_1 & -\sigma^{[k],h}_2 & \beta^{[k],h}_2 & & & \\[5pt]
      & \alpha^{[k],h}_2 & -\sigma^{[k],h}_3 & \ddots & & \\[5pt]
      &  & \ddots & \ddots & \ddots & & \\[5pt]
      & & & \ddots & -\sigma^{[k],h}_{N-1} & \beta^{[k],h}_{N-1} \\[5pt]
      & & & & \alpha^{[k],h}_{N-1} & -\sigma^{[k],h}_N
    \end{pmatrix},
  \end{align}
  and the entries $\alpha^{[k],h}_j,\,\beta^{[k],h}_j,\,\sigma^{[k],h}_j$ are given by
  \begin{align}
    \label{eq:defalpha}
    \alpha^{[k],h}_j &= h^{-2}\sqrt{\Pi^{[k],h}_{j+1}/\Pi^{[k],h}_j} = h^{-2}\exp\left(\frac12(V^{[k],h}_{j}-V^{[k],h}_{j+1})\right), \\
    \label{eq:defbeta}
    \beta^{[k],h}_j &= h^{-2}\sqrt{\Pi^{[k],h}_{j-1}/\Pi^{[k],h}_j} = h^{-2}\exp\left(\frac12(V^{[k],h}_j-V^{[k],h}_{j-1})\right), \\
    \sigma^{[k],h}_j &= \alpha^{[k],h}_j+\beta^{[k],h}_{j-1} \quad(j=2,\ldots,N-1),\quad
    \sigma^{[k],h}_1 = \alpha^{[k],h}_1,\quad \sigma^{[k],h}_N = \beta^{[k],h}_{N-1}.
  \end{align}
\end{lem}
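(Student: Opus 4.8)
The plan is to verify the operator identity \eqref{eq:decompQ} entry by entry, treating the multi-indices $\bi,\bj\in J^h$ in three separate cases. The only structural facts needed are that two distinct neighbouring indices in the cubic lattice $J^h$ differ in exactly one coordinate by $\pm1$, together with the product factorization \eqref{eq:dfactor} of the discrete steady state $\Pi^h$. Everything else is an algebraic consequence of the explicit formula \eqref{eq:Qij} for the matrix elements of $\mc^h$.

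First I would handle the off-diagonal entries. Suppose $\nbh\bi\bj$, so that $\bi$ and $\bj$ agree in all coordinates except one, say the $k$-th, where $|i_k-j_k|=1$. By the factorization \eqref{eq:dfactor}, all factors $\Pi^{[l],h}_{i_l}=\Pi^{[l],h}_{j_l}$ with $l\neq k$ cancel in the ratio, so that
\[
  \frac{\Pi^h_\bi}{\Pi^h_\bj}=\frac{\Pi^{[k],h}_{i_k}}{\Pi^{[k],h}_{j_k}},
\]
and hence $\mc^h_{\bi\bj}=h^{-2}\sqrt{\Pi^{[k],h}_{i_k}/\Pi^{[k],h}_{j_k}}$ depends only on the $k$-th coordinates. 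On the right-hand side of \eqref{eq:decompQ}, every summand other than the $k$-th carries a factor $\delta_{i_kj_k}=0$, so only the $k$-th term survives and contributes $\mc^{[k],h}_{i_kj_k}\prod_{l\neq k}\delta_{i_lj_l}=\mc^{[k],h}_{i_kj_k}$. Comparing with the definitions \eqref{eq:defalpha} and \eqref{eq:defbeta} according to whether $i_k=j_k+1$ or $i_k=j_k-1$ then gives equality. When $\bi\neq\bj$ but $\nbh\bi\bj$ fails, the two indices either differ in at least two coordinates or differ by more than one in a single coordinate; in both situations the left-hand side vanishes by \eqref{eq:Qij}, while on the right-hand side every term contains either a vanishing Kronecker factor or a vanishing (non-tridiagonal) entry of some $\mc^{[k],h}$.

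For the diagonal $\bi=\bj$, I would group the sum in \eqref{eq:Qij} according to the coordinate direction of the neighbour. The neighbours of $\bj$ are exactly the indices $\bj\pm\ee_k$ that remain inside $\{1,\dots,N\}^d$, so
\[
  \mc^h_{\bj\bj}=-h^{-2}\sum_{\bi'\,:\,\nbh{\bi'}{\bj}}\sqrt{\Pi^h_{\bi'}/\Pi^h_\bj}
  =-\sum_{k=1}^d\Big(\,\sum_{\pm\ \mathrm{valid}}h^{-2}\sqrt{\Pi^{[k],h}_{j_k\pm1}/\Pi^{[k],h}_{j_k}}\Big),
\]
again using \eqref{eq:dfactor} so that each inner ratio reduces to the $k$-th factor. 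The bracketed sum over the admissible signs is, by the definitions \eqref{eq:defalpha}--\eqref{eq:defbeta}, precisely $\sigma^{[k],h}_{j_k}$ --- with the one-sided expressions $\sigma^{[k],h}_1$ and $\sigma^{[k],h}_N$ arising automatically at $j_k=1$ and $j_k=N$, where only one neighbour in direction $k$ lies inside the lattice. Summing over $k$ reproduces the diagonal of the right-hand side of \eqref{eq:decompQ}, namely $\sum_{k=1}^d\mc^{[k],h}_{j_kj_k}=-\sum_{k=1}^d\sigma^{[k],h}_{j_k}$.

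I expect the only genuine bookkeeping obstacle to be the correct treatment of the boundary of the cube: along each coordinate axis the extreme sites $j_k\in\{1,N\}$ possess a single neighbour in that direction, which is exactly what forces the tridiagonal (rather than circulant) shape of $\mc^{[k],h}$ and the special values of $\sigma^{[k],h}_1$ and $\sigma^{[k],h}_N$. Once the combinatorics of neighbours in the product lattice $J^h$ is organized coordinate by coordinate, no analytic difficulty remains: the asserted decomposition is an exact algebraic identity, expressing that the graph Laplacian-type generator of a product Markov chain with a factorized reversible measure is the sum of the one-dimensional generators acting in each factor.
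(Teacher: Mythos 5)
Your proof is correct and is precisely the direct entrywise verification from \eqref{eq:Qij} together with the factorization \eqref{eq:dfactor} that the paper itself relies on --- its entire proof is the single line that the claim ``follows directly from the representation \eqref{eq:Qij}'' --- so you have simply spelled out, coordinate by coordinate, what the authors left implicit. The case split (neighbouring indices, non-neighbouring off-diagonal indices, diagonal) and the boundary bookkeeping for $j_k\in\{1,N\}$ are exactly the right points to check, and no further argument is needed.
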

\begin{proof}
  This follows directly from the representation \eqref{eq:Qij} of $\mc^h$'s entries.  
\end{proof}
Naturally, there is an associated decomposition of the operator $\ons^h$ on $\prb(J^h)\subset\setR_+^{J^h}$ 
into a sum of operators $\ons^{[k],h}$, with each $\ons^{[k],h}$ acting on the smaller state spaces $\setR^N_+$ by
\begin{align}
  \label{eq:donsAlex}
  \tilde P[\ons^{[k],h}_{\tilde U}\tilde Q] 
  = h\sum_{j=1}^{N-1}\sqrt{\Pi^{[k],h}_j\Pi^{[k],h}_{j+1}}
  \Lambda\left(\frac{\tilde U_j}{\Pi^{[k],h}_j},\frac{\tilde U_{j+1}}{\Pi^{[k],h}_{j+1}}\right)\,
  \left(\frac{\tilde P_j-\tilde P_{j+1}}h\right)\left(\frac{\tilde Q_j-\tilde Q_{j+1}}h\right), 
\end{align}
for $\tilde U\in\setR_+^N$ and $P,Q\in\setR^N$, recalling the notation $\Pi^{[k],h}$ introduced in \eqref{eq:dfactor}.
For definiteness, select a spatial direction $k\in\{1,\ldots,d\}$ and introduce accordingly:
$J^{[k],h}\subset J^h$ as the set of indices $\bj'$ with $j_k'=0$;
for each $U\in\prb(J^h)$ and $\bj'\in J^{[k],h}$ the projection $U^{[k]}_{\bj'}\in\setR_+^N$ 
such that $(U^{[k]}_{\bj'})_j=U_{(j_1',\ldots,j_{k-1}',j,j_{k+1}',\dots,j_d')}$;
similarly, for $P,Q\in\tg_U^*\prb(J^h)$ the projections $P^{[k]}_{\bj'},Q^{[k]}_{\bj'}\in\setR^N$.
It is then easily verified that
\begin{align}
  \label{eq:decompK}
  P[\ons^h_UQ] = \sum_{k=1}^d h^{d-1}\sum_{\bj'\in J^{[k],h}} P^{[k]}_{\bj'}\big[\ons^{[k],h}_{U^{[k]}_{\bj'}}Q^{[k]}_{\bj'}\big].
\end{align}
Indeed, one only needs to take into account the square-grid structure of $J^h$, 
and the fact that for arbitrary $\nbh{\bi}{\bj}$ with $j:=j_k=i_k+1$, 
\begin{align*}
  \sqrt{\Pi^h_\bi\Pi^h_\bj}\Lambda\left(\frac{A}{\Pi^h_\bi},\frac{B}{\Pi^h_\bj}\right)
  = \sqrt{\Pi^{[k],h}_i\Pi^{[k],h}_{i+1}}\Lambda\left(\frac{A}{\Pi^{[k],h}_i},\frac{B}{\Pi^{[k],h}_{i+1}}\right)
  \qtext{for all $A,B>0$},
\end{align*}
thanks to the factorization \eqref{eq:dfactor}, and to the properties of the logarithmic mean \eqref{eq:logmean}.
\begin{lem}\label{lem:1d-cvx}
  For each $k\in\{1,\ldots,d\}$,
  the matrix $\mc^{[k],h}$ induces a $\lambda^h$-contractive flow on $\setR^N$
  with respect to the corresponding Onsager operator $\ons^{h,[k]}$.
\end{lem}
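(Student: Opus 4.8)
The plan is to deduce $\lambda^h$-contractivity from geodesic $\lambda^h$-convexity of the one-dimensional relative entropy, and to establish the latter by specializing a known one-dimensional convexity result to the chain at hand. Restricting the computation behind \eqref{eq:dKDH} to a single spatial direction, one checks that $\mc^{[k],h} = -\ons^{[k],h}_{\tilde U}\dff_{\tilde U}\ent^{[k],h}$, i.e. the flow generated by $\mc^{[k],h}$ is the gradient flow, with respect to the metric encoded by $\ons^{[k],h}$ from \eqref{eq:donsAlex}, of the one-dimensional relative entropy
\[ \ent^{[k],h}(\tilde U) = h\sum_{j=1}^{N} \tilde U_j\log\big(\tilde U_j/\Pi^{[k],h}_j\big). \]
By the standard fact that geodesic $\lambda$-convexity of the driving functional implies $\lambda$-contractivity of its gradient flow (recalled above), it then suffices to prove that $\ent^{[k],h}$ is geodesically $\lambda^h$-convex along this metric, with $\lambda^h$ as in \eqref{eq:lambdah}.

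The key structural observation is that $\mc^{[k],h}$ is the generator of a reversible nearest-neighbour (birth-death) chain on $\{1,\dots,N\}$ with reflecting endpoints and invariant measure $\Pi^{[k],h}$, and that the metric is precisely the discrete transportation distance of \cite{Maas,Mielke11a} in one dimension. Geodesic convexity of the entropy for chains of this type, under a \emph{discrete convexity} hypothesis on the potential, has been established in the literature; I would invoke \cite{Mielke} (see also \cite{EM,LM}). The only hypothesis to verify is the discrete second-difference bound
\[ V^{[k],h}_{j+1} - 2V^{[k],h}_j + V^{[k],h}_{j-1} \ge \lambda h^2 \qquad (j=2,\dots,N-1). \]
This I would obtain from the $\lambda$-convexity of $V$: under the splitting \eqref{eq:decompose} the Hessian of $V$ is diagonal, so $(V^{[k]})'' \ge \lambda$; since any function with second derivative $\ge\lambda$ has second differences at spacing $h$ bounded below by $\lambda h^2$, inserting the cell-average representation \eqref{eq:dVk} and integrating this pointwise bound over each cell preserves the inequality.

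It then remains to read off the modulus. The diffusive scaling $h^{-2}$ enters the rates \eqref{eq:defalpha}--\eqref{eq:defbeta} only as an overall prefactor, while the convexity of the rescaled chain is governed by the exponential weights $\exp\big(\tfrac12(V^{[k],h}_j-V^{[k],h}_{j\pm1})\big)$. Feeding the second-difference bound into the convexity estimate is expected to reproduce exactly the stated value of \eqref{eq:lambdah}, which factors transparently as $\lambda^h = \tfrac{2}{h^2}\big(1-e^{-\lambda h^2/2}\big)$: the prefactor $\tfrac{2}{h^2}$ is the diffusive rate scaling, and $1-e^{-\lambda h^2/2}$ encodes the exponential dependence of the rates on the potential. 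A consistency check is the expansion $\lambda^h=\lambda+O(h^2)$, matching the continuous modulus.

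The main obstacle is not positivity of the convexity modulus, which is classical, but pinning down the \emph{sharp} constant \eqref{eq:lambdah}: this requires tracking precisely how the lower bound $\lambda h^2$ on the second differences propagates through the Hessian (Eulerian) estimate, in which the logarithmic mean $\Lambda$ from \eqref{eq:logmean} and its derivatives appear. A secondary difficulty is the correct treatment of the reflecting endpoints, i.e. the first and last rows of \eqref{eq:Qk}, where no second-difference bound is directly available and one argues by a mirror (ghost-point) extension. Should a black-box citation fail to deliver \eqref{eq:lambdah} verbatim, the fallback is a direct verification of $\cvx^{[k],h}\ge\lambda^h\ons^{[k],h}$ through the criterion \eqref{eq:cvx}; the hard part there is the algebra involving the derivatives of $\Lambda$ and showing that the resulting quadratic form dominates $\lambda^h\ons^{[k],h}$ uniformly over the simplex.
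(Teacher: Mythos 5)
Your overall skeleton matches the paper's: identify the flow of $\mc^{[k],h}$ as the entropy gradient flow of a reversible birth--death chain with invariant measure $\Pi^{[k],h}$, derive a discrete convexity bound on $V^{[k],h}$ from the $\lambda$-convexity of $V^{[k]}$ by integrating the pointwise inequality over cells, and then invoke a known one-dimensional criterion. Your second-difference bound $V^{[k],h}_{j+1}-2V^{[k],h}_j+V^{[k],h}_{j-1}\ge\lambda h^2$ is exactly the paper's central estimate; the paper exponentiates it into \eqref{eq:rootest}, which yields both the rate monotonicity \eqref{eq:abmonotone} and the quantitative bounds \eqref{eq:el}. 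Where you diverge is the choice of criterion: you route through geodesic convexity via Mielke's result, whereas the paper applies \cite[Theorem 3.1]{CDPP09}, which directly delivers the convex decay inequality \eqref{eq:cvx-e-d} --- all that is actually needed for Proposition \ref{prp:dfokkerconvex} and the tensorisation in Theorem \ref{thm:tensorisation} --- together with the explicit modulus \eqref{eq:cdpp-ineq}. The paper's Remark \ref{rmk:MielkevsCaputo} discusses precisely your alternative and notes that Mielke's criterion replaces the arithmetic mean in \eqref{eq:cdpp-ineq} by a geometric mean, so the two routes give (slightly) different optimal constants; your route proves something formally stronger than what is used downstream.

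The one genuine weak point is that you leave the decisive step --- extracting the stated value $\lambda^h$ from the chosen criterion --- as an expectation with a fallback rather than a computation. In the paper this takes two lines: \eqref{eq:rootest} gives $\alpha^{[k],h}_i-\alpha^{[k],h}_{i+1}\ge\tfrac{\lambda^h}{2}\sqrt{\Pi^{[k],h}_{i+1}/\Pi^{[k],h}_i}$ and the symmetric bound for $\beta^{[k],h}$, whence the modulus \eqref{eq:cdpp-ineq} is bounded below by $\lambda^h$ times a $\cosh$ factor $\ge1$, see \eqref{eq:constant}; the same two bounds show that the geometric-mean modulus of your route is also at least $\lambda^h$, so your plan would in fact close. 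Your concern about the reflecting endpoints is moot: the minimum in \eqref{eq:cdpp-ineq} runs only over interior indices, and the monotonicity of the rates at the boundary holds trivially with the convention that no jumps leave $\{1,\ldots,N\}$, so no ghost-point extension is needed.
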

\begin{proof}
  Eventually, we will apply \cite[Theorem 3.1]{CDPP09}, which deals precisely 
  with matrices $\mc^{h,[k]}$ and operators $\ons^{[k],h}$ of the forms \eqref{eq:Qk} and \eqref{eq:donsAlex}, respectively.
  But first, we establish the following auxiliary estimate
  \begin{align}
    \label{eq:rootest}
    \sqrt{\Pi^{[k],h}_{i+1}\Pi^{[k],h}_{i-1}} \le \left(1-\frac{h^2}2\lambda^h\right)\Pi^{[k],h}_i.
  \end{align}
  Indeed, by $\lambda$-convexity of $V^{[k]}$, we have that
  \begin{align*}
    \frac12\big(V^{[k]}(x_k+h)+V^{[k]}(x_k-h)\big) \ge V^{[k]}(x_k) + \frac\lambda2h^2.
  \end{align*}
  Integration of this inequality from $x_k=(i-1)h$ to $x_k=ih$ yields
  \begin{align*}
    \frac12\big(V^{[k],h}_{i+1}+V^{[k],h}_{i-1}\big) \ge V^{[k],h}_i + \frac\lambda2h^2,
  \end{align*}
  which further implies that
  \begin{align*}
    \sqrt{\exp\big(-V^{[k],h}_{i+1})\exp(-V^{[k],h}_{i-1})} \le \exp\left(-\frac{h^2}2\lambda\right)\exp(-V^{[k],h}_i).
  \end{align*}
  Recalling the definition \eqref{eq:dPik} of $\Pi^{[k],h}$,
  and the definition \eqref{eq:lambdah} of $\lambda^h$,
  the estimate \eqref{eq:rootest} follows.

  An immediate consequence of \eqref{eq:rootest} is the validy of the monotonicity hypotheses
  \begin{align}
    \label{eq:abmonotone}
    \alpha^{[k]}_j\le\alpha^{[k]}_{j-1}
    \qtextq{and}
    \beta^{[k]}_j\le\beta^{[k]}_{j+1}.
  \end{align}
  Therefore, \cite[Theorem 3.1]{CDPP09} is applicable.
  It provides the \eqref{eq:cvx-e-d} for $\mc^{[k],h}$ with respect to the Onsager operator $\ons^{[k],h}$,
  for each
  \begin{align}\label{eq:cdpp-ineq}
    \lambda \leq  \lambda^{*} := 
    \min_{i=2,\ldots,N-1}{(\alpha^{[k]}_i-\alpha^{[k]}_{i+1}) +  (\beta^{[k]}_i-\beta^{[k]}_{i-1})}.
  \end{align}
  Now, from the definitions \eqref{eq:defalpha} and \eqref{eq:defbeta} of $\alpha^{[k]}$ and $\beta^{[k]}$,
  it follows via \eqref{eq:rootest} that
  \begin{equation}\begin{aligned}\label{eq:el}
      \alpha^{[k],h}_i-\alpha^{[k],h}_{i+1}
      &= h^{-2}\sqrt{\Pi^{[k],h}_{i+1}/\Pi^{[k],h}_i}-h^{-2}\sqrt{\Pi^{[k],h}_{i+2}/\Pi^{[k],h}_{i+1}}
      \ge h^{-2}\left(\frac{h^2}2\lambda^h\right) \sqrt{\Pi^{[k],h}_{i+1}/\Pi^{[k],h}_i} \\
      \beta^{[k],h}_i-\beta^{[k],h}_{i-1}
      &= h^{-2}\sqrt{\Pi^{[k],h}_i/\Pi^{[k],h}_{i+1}}-h^{-2}\sqrt{\Pi^{[k],h}_{i-1}/\Pi^{[k],h}_i}
      \ge h^{-2}\left(\frac{h^2}2\lambda^h\right) \sqrt{\Pi^{[k],h}_i/\Pi^{[k],h}_{i+1}},
    \end{aligned}\end{equation}
  which implies that 
  \begin{align}\label{eq:constant}
    \lambda^{*} \geq \lambda^h \min_{1=2, \ldots, N-1} \cosh(V^{[k],h}_i - V^{[k],h}_{i+1}) \geq \lambda^h,
  \end{align}
  as desired.
\end{proof}
Proposition \ref{prp:dfokkerconvex} follows immediately by combining Lemma \ref{lem:1d-cvx} with the tensorisation result from Theorem \ref{thm:tensorisation} below.
\begin{rmk}
  \label{rmk:MielkevsCaputo}
  In the setting of Proposition \ref{lem:1d-cvx}, 
  it is possible to prove the stronger property of $\lambda$-convexity (with a slightly worse constant) with a minor modification of the proof. 
  Instead of using \cite[Theorem 3.1]{CDPP09} to obtain  the inequality \eqref{eq:cdpp-ineq} as above, 
  one could apply Mielke's criterion from \cite[Theorem 5.1]{Mielke} to obtain $\tilde\lambda^h$-convexity,
  with
  \begin{align}
    \label{eq:lambdastar}
    \tilde\lambda^h := 2\min_{i=2,\ldots,N-1}\sqrt{(\alpha^{[k]}_i-\alpha^{[k]}_{i+1}) (\beta^{[k]}_i-\beta^{[k]}_{i-1})}.
  \end{align}
  Note that the arithmetic mean in \eqref{eq:cdpp-ineq} is replaced by a geometric mean in \eqref{eq:lambdastar}. 
  It is easily seen that $\lambda^h>\tilde\lambda^h$, 
  but that the difference $\tilde\lambda^h-\lambda^h=O(h^2)$ becomes negligible in the discrete-to-continuous limit $h\downarrow0$.
  In view of the tensorisation result from \eqref{eq:tensor-cvx} the result remains valid in any dimension with the same constant.
\end{rmk}

\subsection{Tensorisation of convex entropy decay for Markov chains}
\label{sct:tensor}
\newcommand{\calI}{\mathcal{I}}
\newcommand{\calL}{\mathcal{L}}
\newcommand{\calG}{\mathcal{G}}
In this section, we sketch the proof for stability of the inequality \eqref{eq:cvx-e-d} under tensorization.
This result is independent of the discretization and might be of interest on its own right.

We need to fix some notations.
First, we recall an alternative representation of a continuous time Markov chain on a finite set $\calI$: 
the generator $\calL : L^\infty(\calI) \to L^\infty(\calI)$ can be written as
\begin{align*}
  (\calL \psi)_i = \sum_{\delta \in \calG} c_{i,\delta} \big(\psi_{\delta(i)} - \psi_i\big)\;.
\end{align*}
Here, $\calG$ is a set of maps from $\calI$ to $\calI$ representing the possible jumps, 
and $c_{i,\delta} \geq 0$ denotes the jump rate from $i$ to $\delta(i)$. 
For brevity, we shall write $\nabla_\delta \psi_i := \psi_{\delta(i)} - \psi_i$. 

Throughout this section we assume that the following reversibility conditions are satisfied:
\begin{itemize}
\item for every $\delta \in \calG$ there exists a unique $\delta^{-1} \in \calG$ 
  satisfying $\delta^{-1}(\delta(i)) = i$ for all $i$ with $c_{i,\delta} > 0$;
\item there exists a probability measure $\pi=(\pi_i)_{i\in\calI}$ on $\calI$ such that $\pi_i>0$ for all $i$, 
  and 
  \begin{align*}
    \sum_{i \in \calI, \delta \in \calG} F(i,\delta) c_{i,\delta} \pi_i 
    = \sum_{i \in \calI, \delta \in \calG} F(\delta(i),\delta^{-1}) c_{i,\delta} \pi_i 
  \end{align*}
  for all $F : \calI \times \calG \to \setR$.
\end{itemize}
The relative entropy functional $\ent_\pi : \prb(\calI) \to \setR$ is given by
\begin{align*}
  \ent_\pi(u) = \sum_{i \in \calI} u_i \log(u_i/\pi_i)\;.
\end{align*}
In accordance with the situation described before,
we introduce an Onsager operator $\ons$ such that $\calL^*(u) = \ons_u\dff_u\ent_\pi(u)$:
\begin{align*}
  p[\ons_u q] = \frac12 \sum_{i \in \calI, \delta \in \calG}
  \pi_i c_{i,\delta} \Lambda_{i,\delta}(u) \nabla_\delta p_i  \nabla_\delta q_i \;,
  \quad\quad\text{where}\quad
  \Lambda_{i,\delta}(u) = \Lambda \bigg(\frac{u_i}{\pi_i}, \frac{u_{\delta(i)}}{\pi_{\delta(i)}} \bigg).
\end{align*}
For later reference, let us calculate the Hessian $\cvx$:
it follows from the definition that
\begin{align*}
  \cvx_u[p,p] 
  & = -\frac12\sum\limits_{i\in\calI}\sum\limits_{\delta,\eta\in \calG}
  \pi_i c_{i,\delta}  \Lambda_{i,\delta}(u) 
  \nabla_\delta p_i   \Big[ c_{\delta(i),\eta} \nabla_\eta p_{\delta(i)} - c_{i,\eta} \nabla_\eta p_i \Big]\\
  & \qquad + \frac{1}{4}\sum\limits_{i\in\calI}\sum\limits_{\delta,\eta\in \calG}
  \pi_i c_{i,\delta} 
  \big(\nabla_\delta p_i\big)^2\Big[ c_{i,\eta} \Lambda^1_{i,\delta}(u)
  \nabla_\eta(u/\pi)_i +
  c_{\delta(i),\eta} \Lambda^2_{i,\delta}(u) \nabla_\eta(u/\pi)_{\delta(i)}\Big] \\
  &  =: \sum_{i \in \calI} \sum_{\delta, \eta \in \calG} F_{u,p}(i,\delta,\eta),
\end{align*}
where 
$\Lambda_{i,\delta}^m(u) = \partial_m\Lambda \big(\frac{u_i}{\pi_i}, \frac{u_{\delta(i)}}{\pi_{\delta(i)}} \big)$ for $m = 1,2$.
See \cite{EM,Mielke} for details.

Now consider a collection of Markov chains $(\calI^{k}, \calL^{k}, \pi^{k})$ for ${k} = 1, \ldots, N$.
The corresponding product chain $(\calI^\otimes, \calL^\otimes, \pi^\otimes)$ is defined by 
\begin{align*}
  \calI^\otimes & = \calI^1 \times \cdots \times \calI^N, \qquad
  % \\
  (\calL^\otimes \psi)_\bi  = \sum_{k} \calL^{k} \psi_{\bi} \quad 
  \text{ for } \psi \in L^\infty(\calI),\qquad
  % \\
  \pi^\otimes  = \pi^1 \otimes \cdots \otimes \pi^N.
\end{align*}
Here it is understood that $\calL^{k}$ acts on the ${k}$-th coordinate of $\psi$. 
The corresponding Onsager operator and Hessian will be denoted by $\ons^\otimes$ and by $\cvx^\otimes$, respectively.
To simplify notations we shall write $\ent^{k} := \ent_{\pi^{k}}$.

It has been shown in \cite{EM} that geodesic $\lambda$-convexity is preserved under tensorisation:
\begin{align}
  \label{eq:tensor-cvx}
  \Big[ \cvx^{k} \geq \lambda_{k} \ons^{k} \quad\text{  for all }{k} = 1, \ldots, N\Big]
  \qquad   \Longrightarrow \qquad
  \Big[ \cvx^\otimes \geq \big(\min_{k} \lambda_{k}\big) \ons^\otimes \Big].
\end{align}
This result is dimension independent, i.e., the bound does not depend on $N$.
The goal of this section is to verify that the corresponding tensorisation property also holds 
for the convex entropy decay inequality \eqref{eq:cvx-e-d}. 
\begin{thm}[Tensorisation of convex entropy decay]\label{thm:tensorisation}
  Suppose that the inequality \eqref{eq:cvx-e-d} holds for each ${k} = 1, \ldots, N$:
  \begin{align*} 
    \cvx^{k}_u(\dff_u \ent^{k},\dff_u \ent^{k}) \ge \lambda_{k} \dff_u \ent^{k}[\ons_u^{k}\dff_u\ent^{k}]
  \end{align*}
  for all $u \in \prb(\calI^{k})$ and some $\lambda_{k} \in \setR$. 
  Then \eqref{eq:cvx-e-d} also holds for the product chain:
  \begin{align*} 
    \cvx^\otimes_u(\dff_u \ent_\pi,\dff_u \ent_\pi) \ge \big(\min_{k} \lambda_{k}\big) \dff_u \ent_\pi[\ons_u^\otimes\dff_u\ent_\pi]
  \end{align*}
  for all $u \in \prb(\calI^\otimes)$.
\end{thm}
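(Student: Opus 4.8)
The plan is to decompose the product Hessian $\cvx^\otimes$ according to which factors the two jump directions belong to. Writing $\calG^\otimes=\calG^1\sqcup\cdots\sqcup\calG^N$, every term in the formula for $\cvx^\otimes_u[p,p]$ carries a pair $(\delta,\eta)$ with $\delta\in\calG^k$, $\eta\in\calG^l$, and I split the resulting double sum into a \emph{diagonal} part ($k=l$) and an \emph{off-diagonal} part ($k\neq l$). The Onsager form $p[\ons^\otimes_u p]$ splits correspondingly, but it carries only a single index $\delta\in\calG^k$ and is therefore purely diagonal. Throughout I take $p=\dff_u\ent_\pi$, i.e.\ $p_\bi=\log(u_\bi/\pi^\otimes_\bi)$. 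The goal is then to prove two separate estimates: that the diagonal part dominates $(\min_k\lambda_k)\,p[\ons^\otimes_u p]$, and that the off-diagonal part is nonnegative.

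For the diagonal part I would exploit the factorization $\pi^\otimes=\pi^1\otimes\cdots\otimes\pi^N$. Fixing $k$ and freezing the coordinates $\bi'=(i_m)_{m\neq k}$, the rates associated with $\delta\in\calG^k$, the mean $\Lambda_{\cdot,\delta}(u)$, and the differences $\nabla_\delta p$ all depend only on the one-dimensional slice $w^{\bi'}:=\big(u_{(\bi',j)}\big)_{j\in\calI^k}$; moreover $\nabla_\delta p$ along this slice equals $\nabla_\delta\log(w^{\bi'}/\pi^k)$, since the cross-coordinate factors of $\log\pi^\otimes$ are constant in $j$. A short computation --- in which the weight $\prod_{m\neq k}\pi^m_{i_m}$ from $\pi^\otimes$ cancels against the reciprocal weight produced by the $1$-homogeneity of $\Lambda$ (in the first part of the Hessian) and by $\nabla_\eta(u/\pi^\otimes)$ (in the second part) --- shows that the $(k,k)$-block of $\cvx^\otimes$ equals $\sum_{\bi'}\cvx^k_{w^{\bi'}}[\dff\ent^k,\dff\ent^k]$, and likewise the $k$-block of $\ons^\otimes$ equals $\sum_{\bi'}\dff\ent^k[\ons^k_{w^{\bi'}}\dff\ent^k]$. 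Since both sides of the single-chain inequality \eqref{eq:cvx-e-d} are $1$-homogeneous in the density, the hypothesis applies verbatim to the unnormalized slices $w^{\bi'}$; summing over $\bi'$ and then over $k$, and bounding each $\lambda_k$ below by $\min_k\lambda_k$, yields that the whole diagonal part is at least $(\min_k\lambda_k)\,p[\ons^\otimes_u p]$.

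The hard part is the off-diagonal estimate $\sum_{k\neq l}\cvx^{\otimes,k,l}_u[p,p]\ge0$. Here $\delta\in\calG^k$ and $\eta\in\calG^l$ act on distinct coordinates, so they commute, the rates factor as $c_{\delta(\bi),\eta}=c_{\bi,\eta}$, and --- crucially --- every mixed second difference of $\log\pi^\otimes$ vanishes because $\log\pi^\otimes$ is a sum of single-coordinate functions. Using in addition the logarithmic-mean identity $\Lambda_{\bi,\delta}(u)\,\nabla_\delta p_\bi=\nabla_\delta(u/\pi^\otimes)_\bi$ (the same identity underlying the linearization in \eqref{eq:dKDH}), I expect the two parts of the Hessian to recombine, after relabeling the jump indices via $\delta\mapsto\delta^{-1}$, $\eta\mapsto\eta^{-1}$ through the reversibility condition, into a manifestly nonnegative expression --- a discrete sum of squares of mixed differences of $\log(u/\pi^\otimes)$, mirroring the $\sum_{k\neq l}(\partial_k\partial_l\log(u/\pi))^2$ term that appears in the Bakry--\'Emery $\Gamma_2$-calculus for a product of independent diffusions. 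I anticipate this algebraic recombination to be the only genuinely laborious step.

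Finally I would observe that this off-diagonal nonnegativity never invoked the single-chain assumption \eqref{eq:cvx-e-d}: it holds for an \emph{arbitrary} cotangent direction $p$ and is precisely the cross-term estimate that drives the tensorization of geodesic $\lambda$-convexity \eqref{eq:tensor-cvx} proved in \cite{EM}. The cleanest route is therefore to import that nonnegativity directly from \cite{EM} and to supply only the diagonal argument above, where the weaker hypothesis \eqref{eq:cvx-e-d} --- rather than full convexity of the factors --- is all that is needed. Combining the two estimates gives $\cvx^\otimes_u[\dff_u\ent_\pi,\dff_u\ent_\pi]\ge(\min_k\lambda_k)\,\dff_u\ent_\pi[\ons^\otimes_u\dff_u\ent_\pi]$, which is the claim.
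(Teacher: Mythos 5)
Your proposal is correct and follows essentially the same route as the paper: split $\cvx^\otimes$ into diagonal and off-diagonal blocks, import the nonnegativity of the cross terms from \cite{EM} (where it is proved for arbitrary $p$, independently of any convexity of the factors), identify each diagonal block with a weighted sum of one-dimensional Hessians over slices on which $\dff_u\ent_\pi$ restricts to $\dff\ent^{k}$ up to an irrelevant additive constant, and apply the single-chain inequality \eqref{eq:cvx-e-d} slice by slice. Your explicit appeal to the $1$-homogeneity of both sides in the density to handle the unnormalized slices is a small but genuine clarification that the paper leaves implicit.
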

The proof follows along the lines of the proof of \eqref{eq:tensor-cvx} in \cite{EM}. 
For the convenience of the reader we provide some details.
\begin{proof}
  For each $k=1,\ldots,K$, set $\calI_{\check{k}}=\prod_{{\ell} \neq {k}}\calI_{\ell}$,
  and for $\bi\in\calI^\otimes$, let $i_{\check{k}} \in \calI_{\check{k}}$ be the multi-index with the $k$th entry $i_k$ omitted.
  For a function $p : \calI^\otimes \to \setR$, 
  define its reduction $p^{i_{\check{k}} } : \calI_{k}\to\setR$ where all indices except $i_{k}$ are fixed to $i_{\check{k}}$, 
  i.e., $p^{i_{\check{k}} }_{i_{k}} = p_\bi$.
  Likewise, introduce $u^{i_{\check k}}:\calI_k\to\setR_+$.
  Finally, set $\pi^{\check {k}}=\bigotimes_{{\ell}\neq{k}}\pi^{\ell}$.

  It follows from the definitions that the Onsager matrix for the product system admits a decomposition of the form
  \begin{align}
    \label{eq:A-decompose}
    p[\ons^\otimes_u p] = \sum_{{k}=1}^N \sum\limits_{i_{\check {k}}\in\calI_{\check {k}}}  
    \pi^{\check {k}}_{i_{\check {k}}}
    p^{i_{\check {k}}}[\ons^{k}_{u^{i_{\check {k}}}} p^{i_{\check {k}}}]\;.
  \end{align}
  Also $\cvx_u^\otimes$ can be split into terms corresponding to the different components:
  \begin{align*}
    \cvx^\otimes_u[p,p] 
    = \sum_{{k}, {\ell} =1}^N \cvx^{{k},{\ell}}_u[p,p],
    \quad\text{where}\quad 
    \cvx^{{k},{\ell}}_u[p,p] 
    = \sum_{\bi \in \calI^\otimes} \sum_{\delta \in \calG^{k}, \eta \in \calG^{\ell}} F_{u,p}(\bi,\delta,\eta),
  \end{align*}
  and $\calG^{k}$ denotes the set of maps associated with the operator $\calL^{k}$. 
  It has been shown in \cite{EM} that the off-diagonal terms are non-negative (regardless of the $\lambda$-convexity properties of the components):
  \begin{align}\label{eq:off-sign}
    \cvx^{{k},{\ell}}_u[p,p] \geq 0 \quad \text{if ${k} \neq {\ell}$}
  \end{align}
  for all $u \in \prb(\calI^\otimes)$ and $p : \calI^\otimes \to \setR$.
  The on-diagonal terms satisfy
  \begin{align}\label{eq:on-diag}
    \cvx^{{k},{k}}_u[p,p] 
    =  \sum_{i_{\check {k}}\in\calI_{\check {k}}}   \pi^{\check {k}}_{i_{\check {k}}}
    \cvx^{{k}}_{u^{i_{\check {k}}}}[p^{i_{\check {k}}},p^{i_{\check {k}}}].
  \end{align}
  Now fix $u \in \prb(\calI^\otimes)$.
  Then
  \begin{align*}
    \big(\dff_u \ent_\pi\big)^{i_{\check {k}}}_{i_{k}} = \log ({u_\bi}/{\pi_\bi}) = \log (u^{i_{\check {k}}}_{i_{k}}/{\pi^{{k}}_{i_{k}}}) - \log(\pi^{\check {k}}_{i_{\check{k}}}),
    \quad
    \big(\dff_{u^{i_{\check {k}}}} \ent^k\big)_{i_k} = \log (u^{i_{\check {k}}}_{i_{k}}/{\pi^{{k}}_{i_{k}}}),
  \end{align*}
  which allows to conclude that
  \begin{align*}
    (\dff_u \ent_\pi)^{i_{\check{k}}} = \dff_{u^{i_{\check {k}}}} \ent^{{k}} - \log(\pi^{\check {k}}_{i_{\check{k}}}).
  \end{align*}
  Hence both derivatives coincide up to a constant, whose value is irrelevant, since $\cvx_u[p,p]$ depends on $p$ only through the values of its discrete derivatives $\nabla_\delta p$.
  Putting everything together, one obtains
  \begin{align*}
     \cvx^\otimes_u[\dff_u \ent_\pi, \dff_u \ent_\pi]
     &\genfrac{}{}{0pt}{1}{\text{\eqref{eq:off-sign}--\eqref{eq:on-diag}}}{\ge} 
     \sum_{{k} =1}^N \sum_{i_{\check {k}}\in\calI_{\check {k}}}   \pi^{\check {k}}_{i_{\check {k}}}
     \cvx^{{k}}_{u^{i_{\check {k}}}}[\dff_{u^{i_{\check {k}}}} \ent^{{k}},\dff_{u^{i_{\check {k}}}} \ent^{{k}}] \\
     & \genfrac{}{}{0pt}{1}{\text{\eqref{eq:cvx-e-d} for $k$}}{\ge} \sum_{{k} =1}^N \lambda_{k} \sum_{i_{\check {k}}\in\calI_{\check {k}}}   \pi^{\check {k}}_{i_{\check {k}}}
     \dff_{u^{i_{\check {k}}}}\ent^{k}[\ons^{k}_{u^{i_{\check {k}}}} \dff_{u^{i_{\check {k}}}}\ent^{k}]  
     \genfrac{}{}{0pt}{1}{\text{\eqref{eq:A-decompose}}}{\ge} \Big(\min_{k} \lambda_{k}    \Big)  \dff_u \ent_\pi[\ons^\otimes_u \dff_u \ent_\pi],
   \end{align*}
   which is the desired result.
\end{proof}

\section{Discretization of the QDD equation}
\label{sct:ddlss}
In this section, we study the gradient flow of the 
discretized Fisher information $\fish^h$, which is defined by
\begin{align}
  \label{eq:dmiracle2}
  \fish^h(U) = \dff_U\ent^h[-\mc^hU]
  = {h^d}\sum_{\nbh\bi\bj} \sqrt{\Pi^h_\bi\Pi^h_\bj}\Lambda_{\bi\bj}(U)
  \,\left(\frac{\log(U_\bi/\Pi^h_\bi)-\log(U_\bj/\Pi^h_\bj)}h\right)^2.
\end{align}
Notice that this definition is in accordance with \eqref{eq:dmiracle}.

\subsection{Existence of the gradient flow}
\begin{lem}
  \label{lem:fishy}
  $\fish^h$ is well-defined and non-negative on $\prb(J^h)$,
  with $\fish^h(U)=0$ if and only if $U=\Pi^h$.
  Moreover, $\fish^h$ has the alternative representation
  \begin{align}
    \label{eq:daltfish}
    \fish^h(U) = h^{d-2}\sum_{\nbh\bi\bj} \sqrt{\Pi^h_\bi\Pi^h_\bj}
    \left(\frac{U_\bi}{\Pi^h_\bi}-\frac{U_\bj}{\Pi^h_\bj}\right)
    \left(\log\frac{U_\bi}{\Pi^h_\bi}-\log\frac{U_\bj}{\Pi^h_\bj}\right)
  \end{align}
  for each $U\in\prb(J^h)$.
  Finally, all sublevel sets of $\fish^h$ are relatively compact in $\prb(J^h)$.
\end{lem}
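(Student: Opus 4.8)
The plan is to read off all three assertions from the single algebraic identity behind the logarithmic mean, taking \eqref{eq:dmiracle2} as the definition of $\fish^h$ and writing $a_\bj:=U_\bj/\Pi^h_\bj$ for brevity. First I would establish the representation \eqref{eq:daltfish}. The defining property of $\Lambda$ in \eqref{eq:logmean} is that $\Lambda(a,b)\big(\log a-\log b\big)=a-b$ for all $a,b>0$, with both sides vanishing when $a=b$. Using this with $a=a_\bi$ and $b=a_\bj$ inside \eqref{eq:dmiracle2}, one of the two factors $\log a_\bi-\log a_\bj$ combines with $\Lambda_{\bi\bj}(U)$ to produce $a_\bi-a_\bj$; collecting the powers of $h$ (one factor $h^d$ against $1/h^2$) then yields exactly \eqref{eq:daltfish}. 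Well-definedness and finiteness are immediate: on $\prb(J^h)$ every ratio $a_\bj$ is strictly positive, so each summand is finite and the sum is over finitely many edges.

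Non-negativity and the characterisation of the zero set then follow directly from \eqref{eq:daltfish}. Since $t\mapsto\log t$ is strictly increasing, each summand $\sqrt{\Pi^h_\bi\Pi^h_\bj}\,\big(a_\bi-a_\bj\big)\big(\log a_\bi-\log a_\bj\big)$ is a product of two quantities of equal sign, hence non-negative, and it vanishes precisely when $a_\bi=a_\bj$. Thus $\fish^h(U)=0$ forces $a_\bi=a_\bj$ across every edge $\nbh\bi\bj$; as the lattice $J^h$ is connected, this makes $\bj\mapsto a_\bj$ constant, and the shared normalisation $h^d\sum_\bj U_\bj=h^d\sum_\bj\Pi^h_\bj=1$ pins that constant to $1$, i.e.\ $U=\Pi^h$.

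For the relative compactness of the sublevel sets $S_C:=\{U\in\prb(J^h):\fish^h(U)\le C\}$ my plan is a compactness argument rather than a direct coercivity bound. I would suppose $S_C$ fails to be relatively compact in $\prb(J^h)$; since $S_C$ sits inside the compact simplex $\prbnn(J^h)$, this means $\inf_{U\in S_C}\min_\bj U_\bj=0$, so one can pick $U^{(n)}\in S_C$ with $\min_\bj U^{(n)}_\bj\to0$ and, along a subsequence, $U^{(n)}\to U^*\in\prbnn(J^h)$ with $U^*_{\bi^*}=0$ for some $\bi^*$. As $U^*$ still has total mass one its support is non-empty, and connectivity of $J^h$ supplies an edge $\nbh{\bi^*}{\bj^*}$ with $U^*_{\bi^*}=0$ but $U^*_{\bj^*}>0$. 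Along this edge $a^{(n)}_{\bi^*}\to0^+$ while $a^{(n)}_{\bj^*}\to U^*_{\bj^*}/\Pi^h_{\bj^*}>0$, so the corresponding summand in \eqref{eq:daltfish} diverges to $+\infty$; as all summands are non-negative, $\fish^h(U^{(n)})\to+\infty$, contradicting $\fish^h(U^{(n)})\le C$. Hence each $S_C$ stays a positive distance from $\partial\prb(J^h)$ and is relatively compact in $\prb(J^h)$. I expect this last part to be the only real obstacle, and the reason I avoid a direct estimate $\fish^h(U)\ge\Psi(\min_\bj U_\bj)$ is precisely that the blow-up of $\fish^h$ at the boundary is only logarithmic and spread out along paths, so it cannot be localised to a single edge for a general $U$; passing to the limit is exactly what makes a clean one-edge blow-up available.
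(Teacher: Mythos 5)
Your proposal is correct and follows essentially the same route as the paper: the identity $a-b=\Lambda(a,b)(\log a-\log b)$ gives \eqref{eq:daltfish}, connectivity of the lattice pins down the zero set, and relative compactness of sublevel sets is obtained by the same contradiction argument, extracting a boundary limit point and an edge $\nbh{\bj_*}{\bj^*}$ with $U^*_{\bj_*}=0$, $U^*_{\bj^*}>0$ along which the corresponding summand diverges. The only cosmetic differences are that you read non-negativity off \eqref{eq:daltfish} rather than off the square in \eqref{eq:dmiracle2}, and you argue the divergence of the critical edge term directly instead of splitting it into a bounded and a divergent part as the paper does.
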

\begin{rmk}
  \label{rmk:simplex}
  Since the closure of $\prb(J^h)$ in $\setR^{J^h}$ is the compact simplex
  \begin{align*}
    \prbnn(J^h) = \left\{ U\in\setR_+^{J^h}\,;\,\sum_{\bj\in J^h}U_\bj=1\right\},
  \end{align*}
  a subset of $\prb(J^h)$ is relatively compact in $\prb(J^h)$ 
  if and only if it is a closed subset of $\prbnn(J^h)$.
  A consequence is that if $A\subset\prb(J^h)$ is relatively compact, 
  then it has a positive distance $\delta>0$ to the boundary of $\prbnn(J^h)$,
  i.e.,
  \begin{align}
    \label{eq:compact}
    \inf_{U\in A}\min_{\bj\in J^h}U_\bj > 0.
  \end{align}
\end{rmk}
\begin{proof}
  Well-definedness and non-negativity are obvious from \eqref{eq:dmiracle2}.
  Since $\Lambda_{\bi\bj}(U)>0$ for each $U\in\prb(J^h)$,
  and since any two indices $\bi,\bj\in J^h$ can be connected by a sequence of neighbors,
  $\fish^h(U)=0$ holds if and only if $\log(U_\bi/\Pi^h_\bi)$ is a constant independent of $\bi$.
  That is, $U=\alpha\Pi^h$ for a global constant $\alpha>0$.
  Now $U\in\prb(J^h)$ implies $\alpha=1$, i.e., $U=\Pi^h$.

  The representation \eqref{eq:daltfish} follows immediately 
  from the definition \eqref{eq:logmean} of the logarithmic mean, 
  since $a-b = \Lambda(a,b)(\log a-\log b)$.

  It remains to prove the compactness of sublevel sets.
  By continuity of $\fish^h$, any sublevel set $A:=(\fish^h)^{-1}([0,a])$ is relatively closed in $\prb(J^h)$.
  In view of Remark \ref{rmk:simplex} above, it remains to be verified that $A$ is also closed in $\prbnn(J^h)$,
  i.e., that the closure of $A$ in $\setR^{J^h}$ does not intersect $\prbnn(J^h)\setminus\prb(J^h)$.
  Towards a contradiction, 
  assume that a sequence $(U^n)_{n\in\setN}$ in $A$ is such that $U^n\to U^*\notin\prb(J^h)$;
  we are going to show that $\fish^h(U^n)\to\infty$.
  By compactness of $\prbnn(J^h)$ in $\setR^{J^h}$, the limit $U^*$ lies in the boundary $\prbnn(J^h)\setminus\prb(J^h)$. 
  Thus, there is some $\bi_*\in J^h$ with $U^*_{\bi_*}=0$.
  On the other hand, $U^*\in\prbnn(J^h)$ implies that there is some $\bi^*\in J^h$ with $U^*_{\bi^*}>0$.
  Since $\bi_*$ and $\bi^*$ can be connected by a sequence of neighbors,
  there must exist $\bj_*,\bj^*\in J^h$ with $\nbh{\bj_*}{\bj^*}$ and $U^*_{\bj_*}=0$, $a:=U^*_{\bj^*}>0$.
  Since all the terms in the summation in \eqref{eq:daltfish} are non-negative,
  \begin{align*}
    \fish^h(U^n) &\ge h^{d-2}\sqrt{\Pi^h_\bi\Pi^h_\bj}
    \left(\frac{U^n_{\bj^*}}{\Pi^h_{\bj^*}}-\frac{U^n_{\bj_*}}{\Pi^h_{\bj_*}}\right)
    \left(\log\frac{U^n_{\bj^*}}{\Pi^h_{\bj^*}}-\log\frac{U^n_{\bj_*}}{\Pi^h_{\bj_*}}\right) \\
    &=h^{d-2}\sqrt{\Pi^h_\bi\Pi^h_\bj}
    \left[
      \underbrace{\left(\frac{U^n_{\bj^*}}{\Pi^h_{\bj^*}}-\frac{U^n_{\bj_*}}{\Pi^h_{\bj_*}}\right)\log\frac{U^n_{\bj^*}}{\Pi^h_{\bj^*}}}_{=(I)}
      +\underbrace{\left(\frac{U^n_{\bj^*}}{\Pi^h_{\bj^*}}-\frac{U^n_{\bj_*}}{\Pi^h_{\bj_*}}\right)\left(-\log\frac{U^n_{\bj_*}}{\Pi^h_{\bj_*}}\right)}_{=(II)}
    \right].
  \end{align*}
  By the choices made above,
  \begin{align*}
    (I) \to \frac{U^*_{\bj^*}}{\Pi^h_{\bj^*}}\log\frac{U^*_{\bj^*}}{\Pi^h_{\bj^*}} \ge -e^{-1},
  \end{align*}
  while for all sufficiently large $n$,
  \begin{align*}
    (II) \ge \frac12\frac{U^n_{\bj^*}}{\Pi^h_{\bj^*}}\left(-\log\frac{U^n_{\bj_*}}{\Pi^h_{\bj_*}}\right),
  \end{align*}
  which obviously diverges to $+\infty$ as $n\to\infty$.
\end{proof}
We calculated the derivative of $\fish^h$:
\begin{align*}
  \dff_U\fish^h[\Xi]
  = \dff_U^2\ent^h[-\mc^hU,\Xi] + \dff_U\ent^h[-\mc^h\Xi]
  = -h^2\sum_{\bi,\bj}\Xi_\bi\left(\frac1{U_\bi}\mc^h_{\bi\bj}U_\bj + \log(U_\bi/\Pi^h_\bi)\,\mc^h_{\bi\bj}\right) .
\end{align*}
In other words, with a certain abuse of notation, the gradient flow of $\fish^h$ is given by
\begin{align}
  \label{eq:dKDF}
  \dot U
  = -\ons^h_U\dff_U\fish^h
  = \ons^h_U\left(\frac{\mc^hU}{U} + (\mc^h)^T\log(U/\Pi^h)\right).
\end{align}
The initial value problem for this gradient flow is well-posed.
\begin{lem}
  For every initial condition $U_0\in\prb(J^h)$,
  there is a unique differentiable curve $U:\setRnn\to\prb(J^h)$ satisfying \eqref{eq:dKDF}
  with $U(0)=U_0$.
\end{lem}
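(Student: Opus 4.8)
The plan is to treat \eqref{eq:dKDF} as a smooth autonomous ordinary differential equation on the open simplex $\prb(J^h)$, obtain local existence and uniqueness from the Picard--Lindel\"of theorem, and then use the Lyapunov structure of the flow together with the compactness of sublevel sets from Lemma \ref{lem:fishy} to exclude finite-time blow-up or escape to the boundary $\prbnn(J^h)\setminus\prb(J^h)$.

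First I would check that the right-hand side $F(U):=-\ons^h_U\dff_U\fish^h$ of \eqref{eq:dKDF} is smooth on $\prb(J^h)$. This is immediate from the explicit expression: the reciprocals $1/U_\bi$, the logarithms $\log(U_\bj/\Pi^h_\bj)$, and the logarithmic means $\Lambda_{\bi\bj}(U)$ entering $\ons^h_U$ are all smooth functions of $U$ as long as every component $U_\bj$ stays strictly positive. Hence Picard--Lindel\"of yields, for each $U_0\in\prb(J^h)$, a unique maximal solution $U:[0,T^*)\to\prb(J^h)$ with $U(0)=U_0$ and some $T^*\in(0,\infty]$; by smoothness of $F$ this solution is automatically differentiable (in fact $C^\infty$).

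Next I would establish the two structural facts that trap the solution in a compact subset of $\prb(J^h)$. Since $\ons^h_U$ maps $\tg_U^\star\prb(J^h)$ into $\tg_U\prb(J^h)$, the velocity $\dot U$ has vanishing mean, so $h^d\sum_\bj U_\bj(t)\equiv1$ along the flow. Moreover, by symmetry and positive semidefiniteness of the Onsager operator,
\[
  \frac{\dd}{\dd t}\fish^h(U_t)
  = \dff_{U_t}\fish^h[\dot U_t]
  = -\dff_{U_t}\fish^h\big[\ons^h_{U_t}\dff_{U_t}\fish^h\big]
  \le 0,
\]
so that $\fish^h(U_t)\le\fish^h(U_0)$ for all $t\in[0,T^*)$. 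By Lemma \ref{lem:fishy} the sublevel set $A:=(\fish^h)^{-1}\big([0,\fish^h(U_0)]\big)$ is relatively compact in $\prb(J^h)$, whence Remark \ref{rmk:simplex} provides a $\delta>0$ with $\min_\bj U_\bj(t)\ge\delta$ throughout $[0,T^*)$.

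Finally I would invoke the standard continuation principle: the only obstruction to global existence is that the trajectory eventually leaves every compact subset of $\prb(J^h)$ as $t\uparrow T^*$. Since $U_t$ remains in the compact set $A\subset\prb(J^h)$, on which $F$ is bounded and Lipschitz, this is impossible, forcing $T^*=\infty$ and yielding the desired curve $U:\setRnn\to\prb(J^h)$. The only non-routine ingredient here is the uniform lower bound $\min_\bj U_\bj\ge\delta$: it encodes the fact, recorded in Lemma \ref{lem:fishy}, that $\fish^h$ diverges as one approaches $\partial\prbnn(J^h)$, so the decreasing Lyapunov functional forbids the solution from touching the boundary. Everything else is elementary finite-dimensional ODE theory.
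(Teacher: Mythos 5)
Your proposal is correct and follows essentially the same route as the paper: smoothness of the vector field gives a unique maximal solution, monotonicity of $\fish^h$ along its own gradient flow traps the trajectory in a sublevel set, and the compactness of that sublevel set from Lemma \ref{lem:fishy} rules out finite-time escape from $\prb(J^h)$. The only cosmetic difference is that the paper concludes by noting the solution is uniformly Lipschitz on the compact sublevel set and hence has a limit point in $\prb(J^h)$ as $t\uparrow T$, whereas you invoke the equivalent continuation principle directly.
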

\begin{proof}
  The right-hand side of \eqref{eq:dKDF} is obviously smooth in $U$.
  By the standard theory of ordinary differential equations, 
  there exists a \emph{maxial local} solution $U:[0,T)\to\prb(J^h)$.
  Here ``maximal'' means that either $T=\infty$, i.e., the local solution is global, 
  or that there is no limit point in $\prb(J^h)$ of $U(t)$ for $t\uparrow T$.
  We are now going to prove that the second alternative is impossible.

  Indeed, since $U_0\in\prb(J^h)$, we have $a:=\fish^h(U(0))<\infty$ by Lemma \ref{lem:fishy}.
  Now $U$ being a gradient flow implies that $\fish^h(U(t))\le a$ for all $t\in[0,T)$.
  That is, the curve $U$ lies in the sublevel set $A:=(\fish^h)^{-1}([0,a])$, which is compact by Lemma \ref{lem:fishy}.
  The smooth vector field $U\mapsto -\ons^h_U\dff_U\fish^h$ is bounded on $A$,
  and consequently, $U$ is uniformly Lipschitz continuous on $[0,T)$.
  Therefore, $U(t)$ has a limit in $A$ for $t\uparrow T$.
\end{proof}
\begin{rmk}
  In the obvious way, $\mc^h$ induces a linear operator $\Delta_\pi^h$ 
  on the subspace of density functions $u^h\in\prb(\cube)$ that are piecewise constant on each sub-cube $\omega_\bi$.
  In the same spirit, $\ons^h$ induces a compatible Onsager operator $\tilde\ons^h$,
  \begin{align*}
    \tilde{\ons}^h_{u^h}p^h = \ons^h_UP.
  \end{align*}
  With these notations,
  the discrete analogue \eqref{eq:dKDF} of the QDD equation \eqref{eq:dlss} can be written in the following way
  \begin{align}
    \label{eq:ddlss2}
    \partial_tu = \tilde{\ons}^h_u\left(\frac{\Delta_\pi^hu}{u}+(\Delta_\pi^h)^\star\log(u/\pi^h)\right),
  \end{align}
  which is a discretized version of \eqref{eq:dlss2}.  
\end{rmk}

\subsection{Proof of the main theorem}
We are finally in the position to prove Theorem \ref{thm:main}, i.e., 
we derive the estimates \eqref{eq:ddlssdecay}, \eqref{eq:dauxiliary} and \eqref{eq:L1}
for the gradient flow \eqref{eq:ddlss2} of the discrete Fisher information functional $\fish^h$.

The estimates \eqref{eq:ddlssdecay} and \eqref{eq:dauxiliary} follow easily by means of Proposition \ref{prp:mms}.
Indeed, in order to verify that Proposition \ref{prp:mms} applies in our situation, 
it suffices to observe that the discrete entropy functional $\ent^h$ satisfies \eqref{eq:cvx-e-d},
which is a consequence of Proposition \ref{prp:dfokkerconvex} above,
and of the fact that $\fish^h=|\partial\ent^h|^2$ by definition in \eqref{eq:dmiracle2}.
For the proof of \eqref{eq:L1}, we combine the first estimate in \eqref{eq:dlssdecay} 
with the Csiszar-Kullback inequality, see e.g. \cite{csiszar},
which specializes in the case at hand to 
\begin{align*}
  \| u^h - \pi^h \|_{L^1(\cube)}^2 \le 2\ent^h(u^h). 
\end{align*}

\subsection{Discretization in time}
We shall now use our spatial discretization as basis for the implementation of a numerical scheme
for approximate solution of \eqref{eq:dlss}.
More precisely, we apply a discretization in time to the ordinary differential equations \eqref{eq:ddlss2},
\begin{align*}
  \frac{\dd}{\dd t} U = F^h(U) = K^h_US_U, 
  \qtextq{with} 
  (S_U)_\bi = \frac{(\mc^hU)_\bi}{U_\bi} + \left[(\mc^h)^T\log\left(\frac{U_\bj}{\Pi^h_\bj}\right)_{\bj\in J^h}\right]_\bi.
\end{align*}
For discretization in time, an implicit Euler scheme is employed:
we replace the function $U^h:[0,T]\to\setR_+^{J^h}$ by a sequence $(U^{h,\tau}_m)_m$ 
with the interpretation that $U^{h,\tau}_m$ approximates $U^h(m\tau)$,
and solve
\begin{align}
  \label{eq:euler}
  \frac{U^{h,\tau}_m-U^{h,\tau}_{m-1}}\tau = F^h(U^{h,\tau}_m)
\end{align}
inductively for $m=1,2,\ldots$.
The (first order) implicit Euler method is the canonical choice here 
since it transfers the decay estimates \eqref{eq:ddlssdecay}
from the semi-discrete to the fully discrete level.
\begin{prp}
  \label{prp:eulerdown}
  Assume that a sequence $(U^{h,\tau}_m)_{m\geq0}$ satisfies \eqref{eq:euler}.
  Then the following time-discrete variants of \eqref{eq:ddlssdecay} hold:
  \begin{align}
    \label{eq:dddlssdown}
    \ent^h(U^{h,\tau}_m) \le \ent^h(U^{h,\tau}_{m'})\,\big(1+(2\lambda^h)^2\tau\big)^{-(m-m')}
    \qtextq{and}
    \fish^h(U^{h,\tau}_m) \le \fish^h(U^{h,\tau}_{m'})\,\big(1+(2\lambda^h)^2\tau\big)^{-(m-m')},
  \end{align}
  for all integers $m\ge m'\ge0$.
\end{prp}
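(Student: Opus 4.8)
The plan is to convert the two differential dissipation inequalities underlying Proposition~\ref{prp:mms} into their geometric (implicit-Euler) counterparts, exploiting the fact that \emph{both} functionals $\ent^h$ and $\fish^h$ are convex on the open simplex $\prb(J^h)$. In the notation of Proposition~\ref{prp:mms} we are in the situation $\pot=\ent^h$, $\aux=\fish^h=|\partial\ent^h|^2$, and the scheme \eqref{eq:euler} is the implicit Euler discretization of the auxiliary flow $\dot U=F^h(U)=-\ons^h_U\dff_U\fish^h=F_\aux(U)$. Convexity of $\ent^h$ is classical, since it differs from $U\mapsto h^d\sum_\bj U_\bj\log U_\bj$ only by a linear term and $r\mapsto r\log r$ is convex. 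Convexity of $\fish^h$ is the one genuinely new point: by the representation \eqref{eq:daltfish}, $\fish^h$ is a positive combination of terms $\sqrt{\Pi^h_\bi\Pi^h_\bj}\,f\big(U_\bi/\Pi^h_\bi,\,U_\bj/\Pi^h_\bj\big)$ with $f(a,b)=(a-b)(\log a-\log b)$, composed with the linear maps $U\mapsto(U_\bi/\Pi^h_\bi,U_\bj/\Pi^h_\bj)$; a direct computation of the Hessian of $f$ on $\setR_+\times\setR_+$ shows that it is positive semi-definite (indeed its determinant vanishes identically), so $\fish^h$ is convex.

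With convexity available, I would abbreviate $U_m:=U^{h,\tau}_m$ and treat the two bounds in parallel. Writing the Euler step as $U_{m-1}-U_m=-\tau F^h(U_m)=\tau\,\ons^h_{U_m}\dff_{U_m}\fish^h$, the first-order convexity inequality for $\ent^h$ gives
\begin{align*}
\ent^h(U_{m-1})-\ent^h(U_m)\ \ge\ \dff_{U_m}\ent^h[U_{m-1}-U_m]\ =\ \tau\,\dff_{U_m}\ent^h\big[\ons^h_{U_m}\dff_{U_m}\fish^h\big]\ =\ \tau\,J(U_m),
\end{align*}
where $J$ is exactly the quantity from the proof of Proposition~\ref{prp:mms}. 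Since $\ent^h$ satisfies \eqref{eq:cvx-e-d} with constant $\lambda^h$ by Proposition~\ref{prp:dfokkerconvex}, the pointwise inequality \eqref{eq:interm} applies at $U_m$ and yields $J(U_m)\ge(2\lambda^h)^2\ent^h(U_m)$, whence $\big(1+(2\lambda^h)^2\tau\big)\ent^h(U_m)\le\ent^h(U_{m-1})$. Iterating this one-step bound from $m'$ to $m$ gives the first inequality in \eqref{eq:dddlssdown}.

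The Fisher information is handled identically, now using convexity of $\fish^h$:
\begin{align*}
\fish^h(U_{m-1})-\fish^h(U_m)\ \ge\ \dff_{U_m}\fish^h[U_{m-1}-U_m]\ =\ \tau\,\dff_{U_m}\fish^h\big[\ons^h_{U_m}\dff_{U_m}\fish^h\big]\ =\ \tau\,I(U_m),
\end{align*}
with $I$ again the quantity from Proposition~\ref{prp:mms}. The pointwise bound \eqref{eq:interm-2}, namely $I(U_m)\ge(2\lambda^h)^2\fish^h(U_m)$, then produces $\big(1+(2\lambda^h)^2\tau\big)\fish^h(U_m)\le\fish^h(U_{m-1})$, and iteration yields the second inequality in \eqref{eq:dddlssdown}.

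The step I expect to be the main obstacle is the convexity of the discrete Fisher information $\fish^h$; once this is in place, everything else is the standard mechanism by which the implicit Euler method turns a differential dissipation estimate into a geometric one. I would emphasize that ordinary (Euclidean) convexity is precisely what the scheme requires: \eqref{eq:euler} is formulated through the linear structure of $\setR^{J^h}$ rather than the Riemannian structure induced by $\ons^h$, so the plain first-order convexity inequality suffices and no metric (JKO-type) variational argument is needed.
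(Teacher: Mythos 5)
Your proposal is correct and follows essentially the same route as the paper: the paper likewise establishes Euclidean convexity of $\ent^h$ and $\fish^h$ (Lemma~\ref{lem:allconvex}), pairs the Euler step with $\dff_{U_m}\ent^h$ resp.\ $\dff_{U_m}\fish^h$, invokes \eqref{eq:interm} and \eqref{eq:interm-2} via \eqref{eq:cvx-e-d}, and iterates the resulting one-step contraction. Your verification of the convexity of $\fish^h$ through the positive semi-definite (rank-one degenerate) Hessian of $f(a,b)=(a-b)(\log a-\log b)$ is just a repackaging of the paper's direct second-derivative computation, which exhibits the same quadratic form as a sum of squares.
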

The proof of Proposition \ref{prp:eulerdown} is a consequence of the following convexity property.
\begin{lem}
  \label{lem:allconvex}
  Both $\ent^h$ and $\fish^h$ are convex on $\prb(J^h)$ in the sense of linear interpolation.
\end{lem}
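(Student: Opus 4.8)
The plan is to handle the two functionals separately, since $\ent^h$ is convex for elementary reasons while $\fish^h$ requires an auxiliary joint-convexity statement. Throughout, ``convex in the sense of linear interpolation'' means convexity along segments $(1-s)U^0 + sU^1$ in the affine structure of $\setR^{J^h}$, and since $\prbnn(J^h)$ is itself convex such segments stay inside the domain. For $\ent^h$, recall that $\ent^h(U) = h^d\sum_{\bj} U_\bj\log(U_\bj/\Pi^h_\bj) - \gamma^h$. Each summand depends on the single coordinate $U_\bj$ through $t\mapsto t\log t - t\log\Pi^h_\bj$, which is convex on $\setRnn$ because $t\mapsto t\log t$ is convex and the remaining term is linear. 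A finite nonnegative combination of coordinatewise-convex functions is convex, so $\ent^h$ is convex.

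For $\fish^h$ I would start from the alternative representation \eqref{eq:daltfish}, which I rewrite as
\[
\fish^h(U) = h^{d-2}\sum_{\nbh\bi\bj} \sqrt{\Pi^h_\bi\Pi^h_\bj}\; g\!\left(\frac{U_\bi}{\Pi^h_\bi},\frac{U_\bj}{\Pi^h_\bj}\right), \qquad g(a,b) := (a-b)(\log a - \log b).
\]
The prefactors $\sqrt{\Pi^h_\bi\Pi^h_\bj}$ are positive constants, and for each edge $\nbh\bi\bj$ the map $U\mapsto(U_\bi/\Pi^h_\bi,\,U_\bj/\Pi^h_\bj)$ is linear. Since convexity is preserved under precomposition with linear maps and under nonnegative linear combinations, the convexity of $\fish^h$ follows at once, \emph{provided} the two-variable function $g$ is jointly convex on $(0,\infty)^2$.

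The crux is therefore this joint-convexity of $g$, which I would establish via the integral representation of the logarithm: since $\log a - \log b = (a-b)\int_0^1 \big((1-t)b+ta\big)^{-1}\dd t$, one has
\[
g(a,b) = \int_0^1 \frac{(a-b)^2}{(1-t)b + ta}\,\dd t .
\]
For each fixed $t\in[0,1]$ the integrand is the quadratic-over-linear function $(x,y)\mapsto x^2/y$ evaluated at $x = a-b$ and $y = (1-t)b + ta$, both linear in $(a,b)$ with $y>0$. As $(x,y)\mapsto x^2/y$ is jointly convex on $\setR\times(0,\infty)$, each integrand is jointly convex, and integration in $t$ preserves this. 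Alternatively one can compute the Hessian of $g$ directly and check that its diagonal entries are positive while its determinant vanishes identically, so the Hessian is positive semidefinite.

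The only nontrivial step is the joint convexity of $g$; everything else is formal bookkeeping. I would also point out that $g$ is convex but not strictly convex (the Hessian is singular, and the integral representation shows the degeneracy along $a=b$), so the lemma asserts plain convexity and nothing sharper. This is precisely what is needed to run the implicit-Euler monotonicity argument in Proposition \ref{prp:eulerdown}.
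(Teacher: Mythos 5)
Your proof is correct, and for $\ent^h$ it is essentially identical to the paper's (the paper just writes out the Hessian $h^d\sum_\bj U_\bj^{-1}\Xi_\bj^2\ge0$ where you invoke coordinatewise convexity of $t\mapsto t\log t$). For $\fish^h$ both arguments start from the edge-wise representation \eqref{eq:daltfish}, so the reduction to the joint convexity of $g(a,b)=(a-b)(\log a-\log b)$ on $(0,\infty)^2$ is implicitly common to both; the difference lies in how that convexity is certified. The paper differentiates $\fish^h$ twice and exhibits the per-edge contribution as a manifest square,
\begin{align*}
  \sqrt{\Pi^h_\bi\Pi^h_\bj}
  \left(\frac{\Pi^h_\bi}{U_\bi}+\frac{\Pi^h_\bj}{U_\bj}\right)
  \left(\sqrt{\frac{\Pi^h_\bi}{U_\bi}\frac{U_\bj}{\Pi^h_\bj}}\frac{\Xi_\bi}{\Pi^h_\bi}-\sqrt{\frac{\Pi^h_\bj}{U_\bj}\frac{U_\bi}{\Pi^h_\bi}}\frac{\Xi_\bj}{\Pi^h_\bj}\right)^2,
\end{align*}
which, after substituting $a=U_\bi/\Pi^h_\bi$, $b=U_\bj/\Pi^h_\bj$, is exactly $\sqrt{\Pi^h_\bi\Pi^h_\bj}$ times the Hessian form of $g$ evaluated at $(\Xi_\bi/\Pi^h_\bi,\Xi_\bj/\Pi^h_\bj)$ --- a hands-on verification that $\Hess g$ is positive semidefinite of rank one. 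You instead use the integral representation $g(a,b)=\int_0^1 (a-b)^2\big((1-t)b+ta\big)^{-1}\dd t$ (equivalently, $1/\Lambda(a,b)=\int_0^1\big((1-t)b+ta\big)^{-1}\dd t$) together with the standard joint convexity of the quadratic-over-linear function $x^2/y$; both the representation and the convexity claim are correct, and precomposition with linear maps plus nonnegative summation then closes the argument. Your route is more conceptual and computation-free; the paper's computation has the side benefit of displaying the kernel of the per-edge Hessian explicitly, confirming your observation that the convexity is not strict. Both proofs are complete.
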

\begin{rmk}
  We emphasize that convexity with respect to linear interpolation 
  and geodesic convexity with respect to the Onsager operator $\ons^h$
  are (almost) unrelated notions.
\end{rmk}
\begin{proof}[Proof of Lemma \ref{lem:allconvex}]
  First, recall that $\phi:\setR_+\to\setR$ with $\phi(s)=s\log s$ has derivatives
  \[ \phi'(s) = 1+\log s, \quad \phi''(s) = 1/s. \]
  Given $U\in\prb(J^h)$ and $\Xi\in\tg_U\prb(J^h)$, 
  we have on the one hand that
  \begin{align*}
    \dff_U^2\ent^h[\Xi]^2
    = h^d\sum_\bj\partial_{U_\bj}^2\left(U_\bj\log U_\bj-U_\bj\log\Pi^h_\bj\right)\Xi_\bj^2
    = h^d\sum_\bj U_\bj^{-1}\Xi_\bj^2 \ge 0,
  \end{align*}
  and on the other hand that
  \begin{align*}
    \dff_U^2\fish^h[\Xi]^2
    &= h^d \sum_{\nbh\bi\bj}\sqrt{\Pi^h_\bi\Pi^h_\bj}\bigg\{
    \partial_{U_\bi}^2\left(\left(\frac{U_\bi}{\Pi^h_\bi}-\frac{U_\bj}{\Pi^h_\bj}\right)\log\frac{U_\bi}{\Pi^h_\bi}\right)\Xi_\bi^2
    + \partial_{U_\bj}^2\left(\left(\frac{U_\bj}{\Pi^h_\bj}-\frac{U_\bi}{\Pi^h_\bi}\right)\log\frac{U_\bj}{\Pi^h_\bj}\right)\Xi_\bj^2 \\
    &\qquad -2\partial_{U_\bi}\partial_{U_\bj}\left(\frac{U_\bi}{\Pi^h_\bi}\log\frac{U_\bj}{U^h_\bj}+\frac{U_\bj}{\Pi^h_\bj}\log\frac{U_\bi}{U^h_\bi}\right)\Xi_\bi\Xi_\bj
    \bigg\}\\
    % &= h^d\sum_{\nbh\bi\bj} \sqrt{\Pi^h_\bi\Pi^h_\bj}\bigg\{
    % \left(\frac{\Pi^h_\bi}{U_\bi}+\left(\frac{\Pi^h_\bi}{U_\bi}\right)^2 \frac{U_\bj}{\Pi^h_\bj}\right)\left(\frac{\Xi_\bi}{\Pi^h_\bi}\right)^2
    % + \left(\frac{\Pi^h_\bj}{U_\bj}+\left(\frac{\Pi^h_\bj}{U_\bj}\right)^2 \frac{U_\bi}{\Pi^h_\bi}\right)\left(\frac{\Xi_\bj}{\Pi^h_\bj}\right)^2\\
    % &\qquad -2\left(\frac{\Pi^h_\bi}{U_\bi}+\frac{\Pi^h_\bj}{U_\bj}\right) \frac{\Xi_\bi}{\Pi^h_\bi}\frac{\Xi_\bj}{\Pi^h_\bj}
    % \bigg\} \\
    &= h^d\sum_{\nbh\bi\bj} \sqrt{\Pi^h_\bi\Pi^h_\bj}
    \left(\frac{\Pi^h_\bi}{U_\bi}+\frac{\Pi^h_\bj}{U_\bj}\right)
    \left(\sqrt{\frac{\Pi^h_\bi}{U_\bi}\frac{U_\bj}{\Pi^h_\bj}}\frac{\Xi_\bi}{\Pi^h_\bi}-\sqrt{\frac{\Pi^h_\bj}{U_\bj}\frac{U_\bi}{\Pi^h_\bi}}\frac{\Xi_\bj}{\Pi^h_\bj}\right)^2
    \ge 0.
  \end{align*}
  Non-negativity of the second derivatives implies convexity.
\end{proof}
\begin{proof}[Proof of Proposition \ref{prp:eulerdown}]
  Apply the derivative of $\ent^h$ at $U^{h,\tau}_m$ to \eqref{eq:euler} to obtain
  \begin{align*}
    -\dff_{U^{h,\tau}_m}\ent^h\left[\frac{U^{h,\tau}_m-U^{h,\tau}_{m-1}}\tau\right]
    = \dff_{U^{h,\tau}_m}\ent^h\left[\ons_{U^{h,\tau}_m}\dff_{U^{h,\tau}_m}\fish^h\right]
    \ge (2\lambda_h)^2\ent^h(U^{h,\tau}_m),
  \end{align*}
  where we have used the estimate \eqref{eq:interm} and \eqref{eq:cvx-e-d} with constant $\lambda^h$   to obtain the inequality.
  Furthermore, since $\ent^h$ is convex by Lemma \ref{lem:allconvex} above,
  \begin{align*}
    \ent^h(U^{h,\tau}_{m-1})\ge \ent^h(U^{h,\tau}_m) - \dff_{U^{h,\tau}}\ent^h\left[U^{h,\tau}_m-U^{h,\tau}_{m-1}\right]
    \ge \left(1+(2\lambda^h)^2\tau\right)\ent^h(U^{h,\tau}_m).
  \end{align*}
  An iteration of this estimate yields the first inequality in \eqref{eq:dddlssdown}.
  The proof of the second inequality is obtained in an analogous way,
  now applying the derivative of $\fish^h$ in place of $\ent^h$ to \eqref{eq:euler},
  using the estimate \eqref{eq:interm-2}, and the convexity of $\fish^h$ with respect to linear interpolation.
\end{proof}

\subsection{Numerical experiments}
In our experiments, we restrict attention to the two-dimensional situation $d=2$, 
i.e., $\cube=[0,1]^2$ is the unit square.
For the potential, we have used $V(x)=\lambda/2|x-\bar x|^2$, with $\bar x=(1/2,1/2)$ the center of $[0,1]^2$,
corresponding to $W(x) = \lambda^2|x-\bar x|^2-4\lambda$.
Different choices for the convexity parameter $\lambda$ are used in the simulations.
In each experiment, a spatial resolution of $N=30$ grid points in each direction has been used.
The time step $\tau>0$ is chosen in dependence of $\lambda$;
since we solve the implicit Euler scheme \eqref{eq:euler} by an undamped Newton iteration in each time step,
a sufficiently small $\tau$ is necessary for numerical well-posedness of the scheme.

\subsubsection{Illustration of qualitative behavior}
For illustration of the complex qualitative behavior of solutions $u$ to \eqref{eq:dlss},
we report results for a numerical experiment 
in the unconfined case $W\equiv0$, i.e., $\lambda=0$,
for the initial datum
\begin{align}
  \label{eq:BLSu0}
  u_0(x) = \frac1Z\big(\cos^{16}\pi x_1 + \cos^{16}\pi x_2\big) + 10^{-4},
\end{align}
where $Z=0.392\ldots$ is such that $u_0$ integrates to one on $[0,1]^2$.
The initial condition is drawn in Figure \ref{fig:BLSinit}.
This is a straight-forward generalization of the one-dimensional example from \cite[Figure 1]{BLS} to two space dimensions.
Notice that $u_0$ has a large plateau where its values are very small (order $10^{-4}$) 
in comparison to the average value (order one).
\begin{figure}
  \centering
  \includegraphics[width=0.3\textwidth]{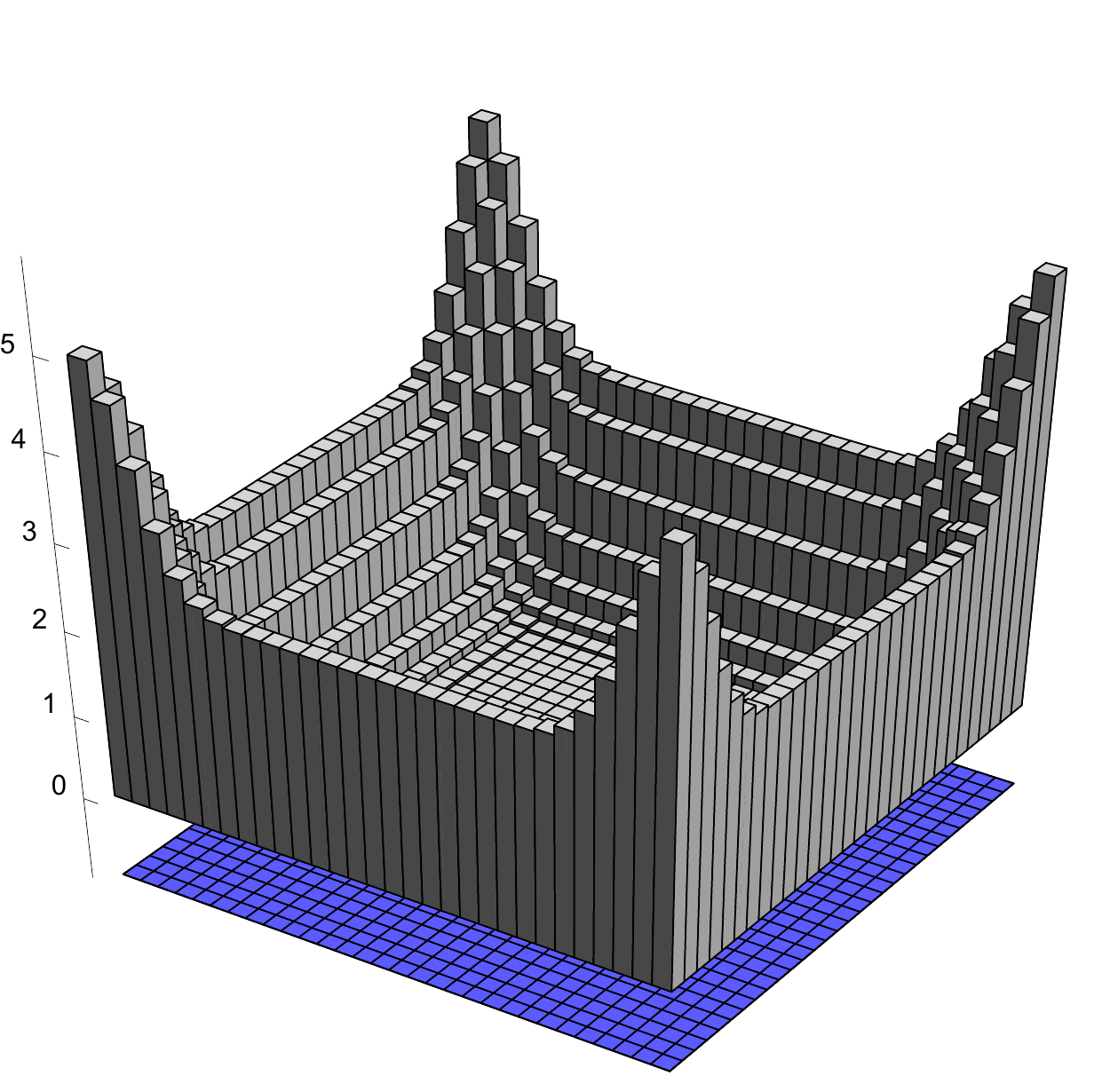}
  \includegraphics[width=0.3\textwidth]{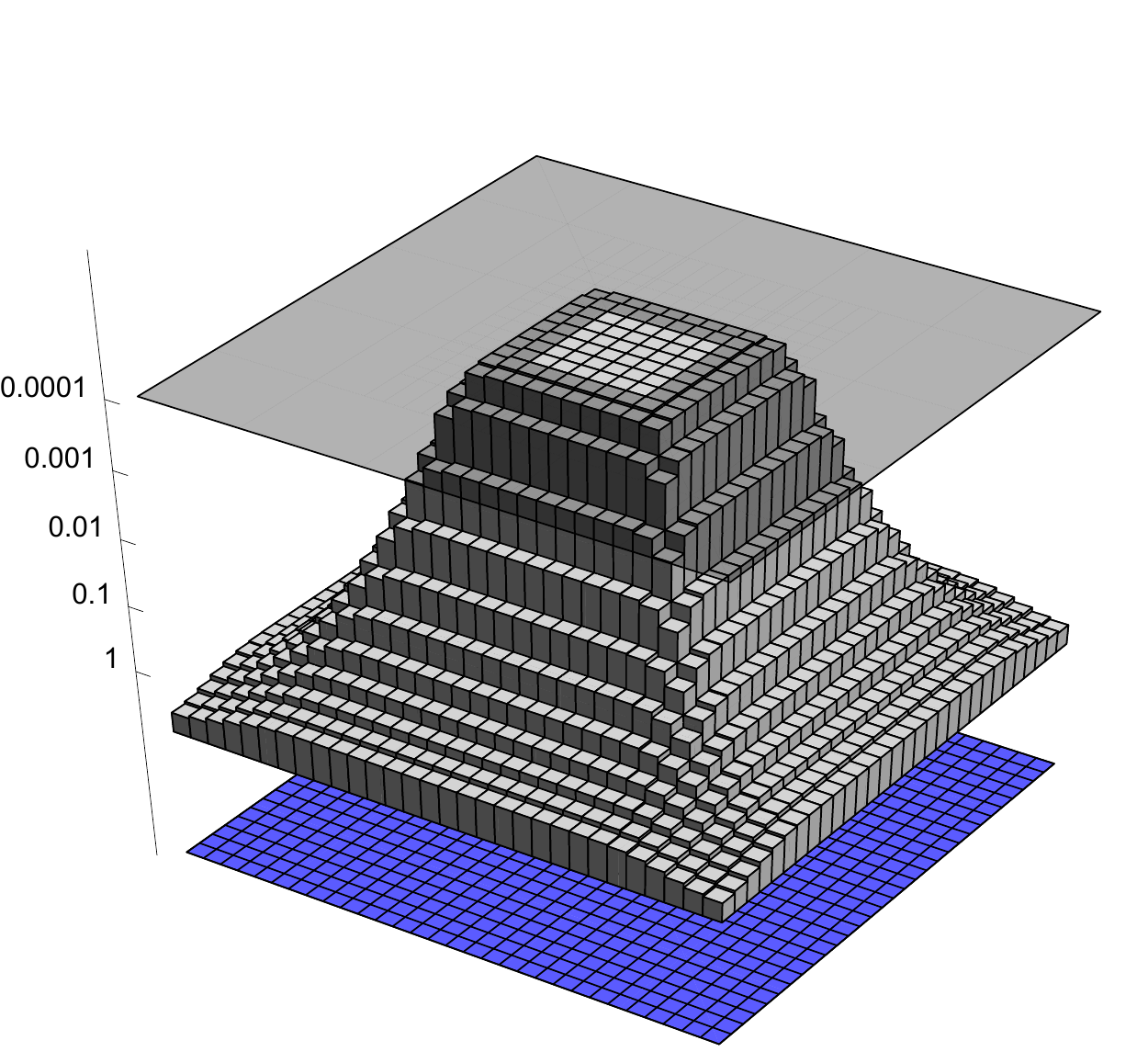}
  \includegraphics[width=0.3\textwidth]{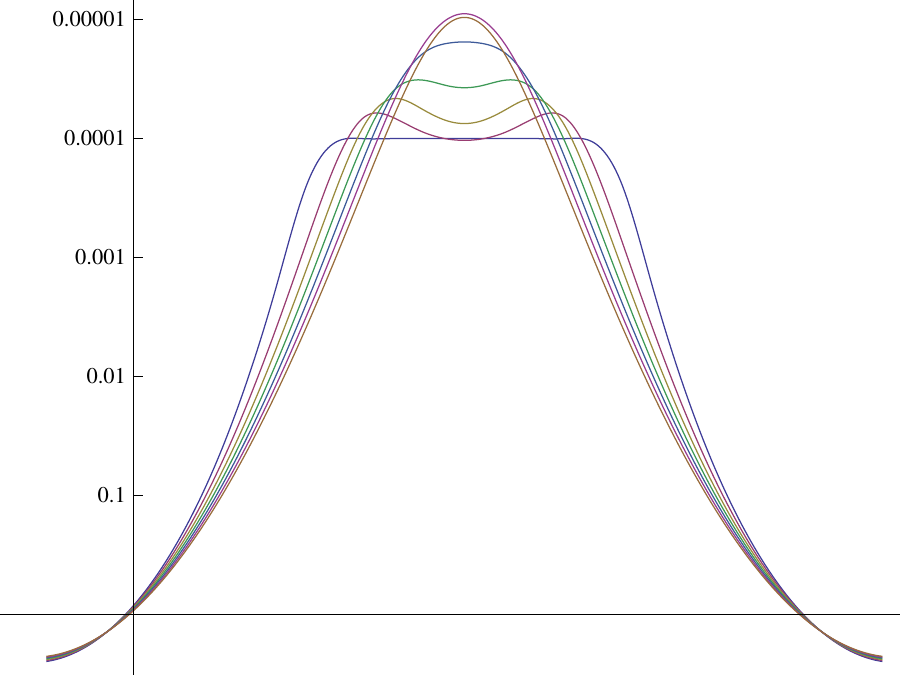}
  \caption{Left and middle: initial condition $u_0$ from \eqref{eq:BLSu0} in normal and in logarithmic scale.
    Right: values of the corresponding solution on one-dimensional the cross-section $x_2=1/2$
    at times $t=k\times10^{-6}$ for $k=0,\,0.5,\,1.0,\,1.5,\,2.0,\,2.5,\,3.0$.}
  \label{fig:BLSinit}
\end{figure}
\begin{figure}
  \centering
  \includegraphics[width=0.3\textwidth]{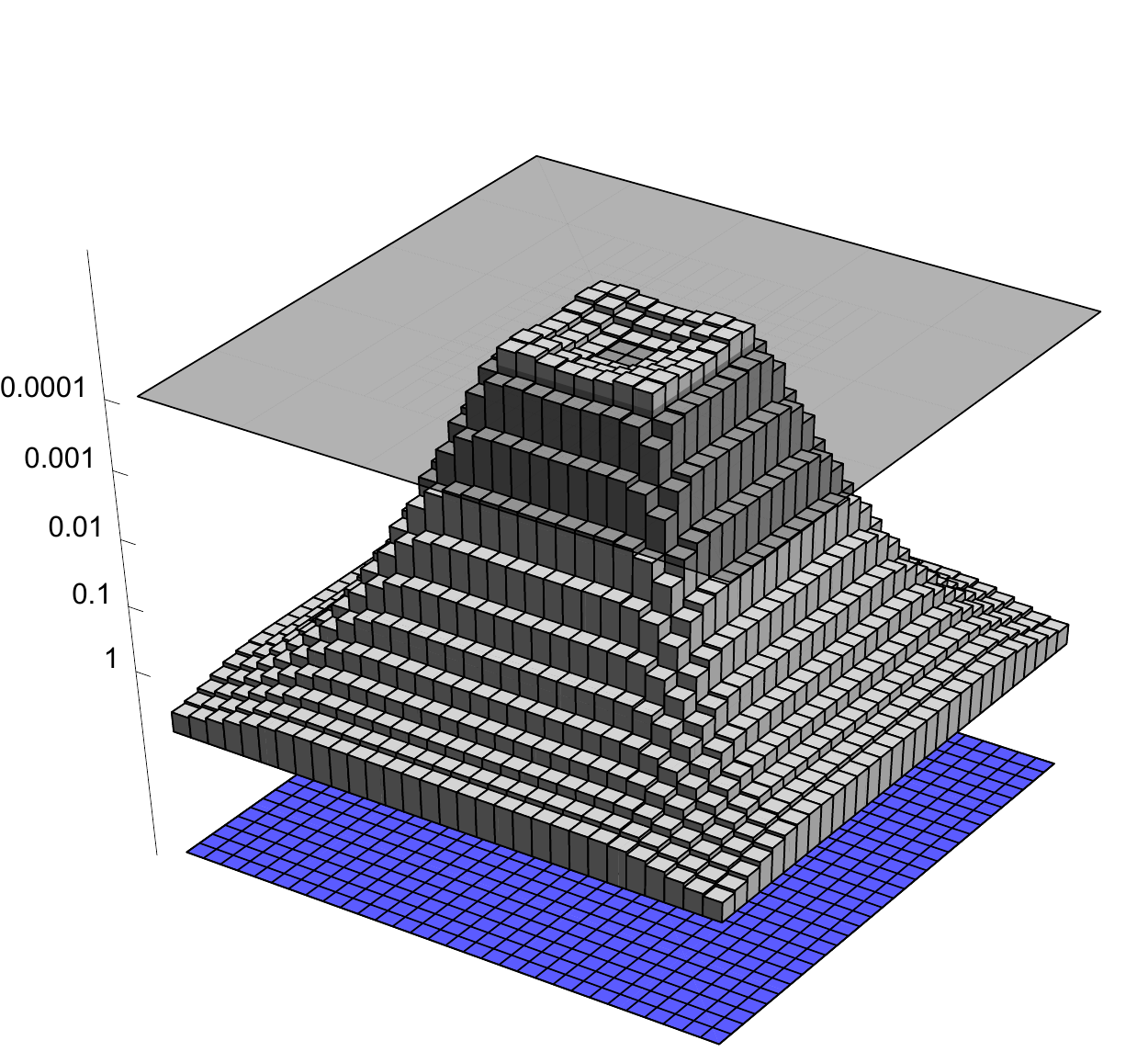}
  \includegraphics[width=0.3\textwidth]{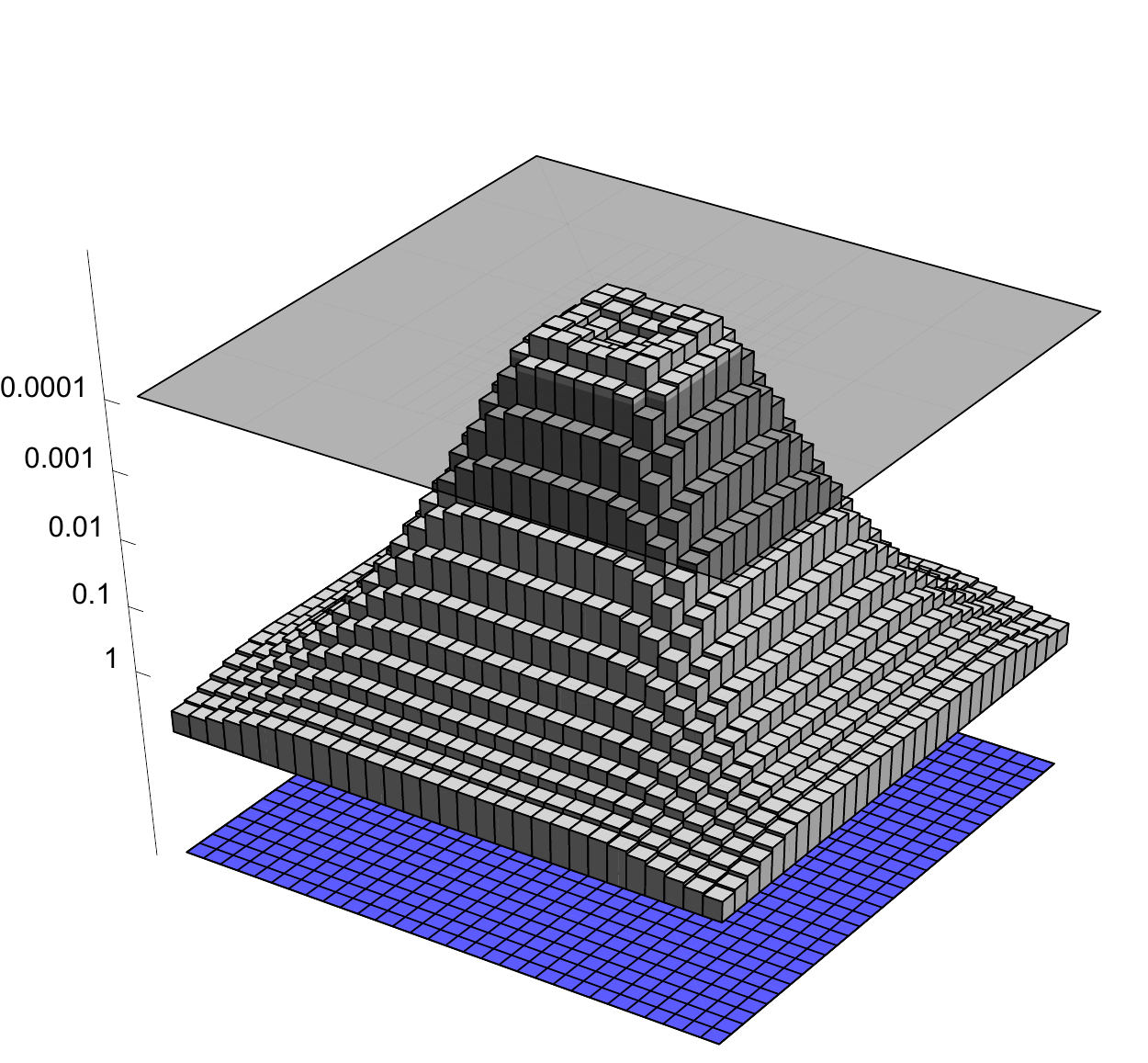}
  \includegraphics[width=0.3\textwidth]{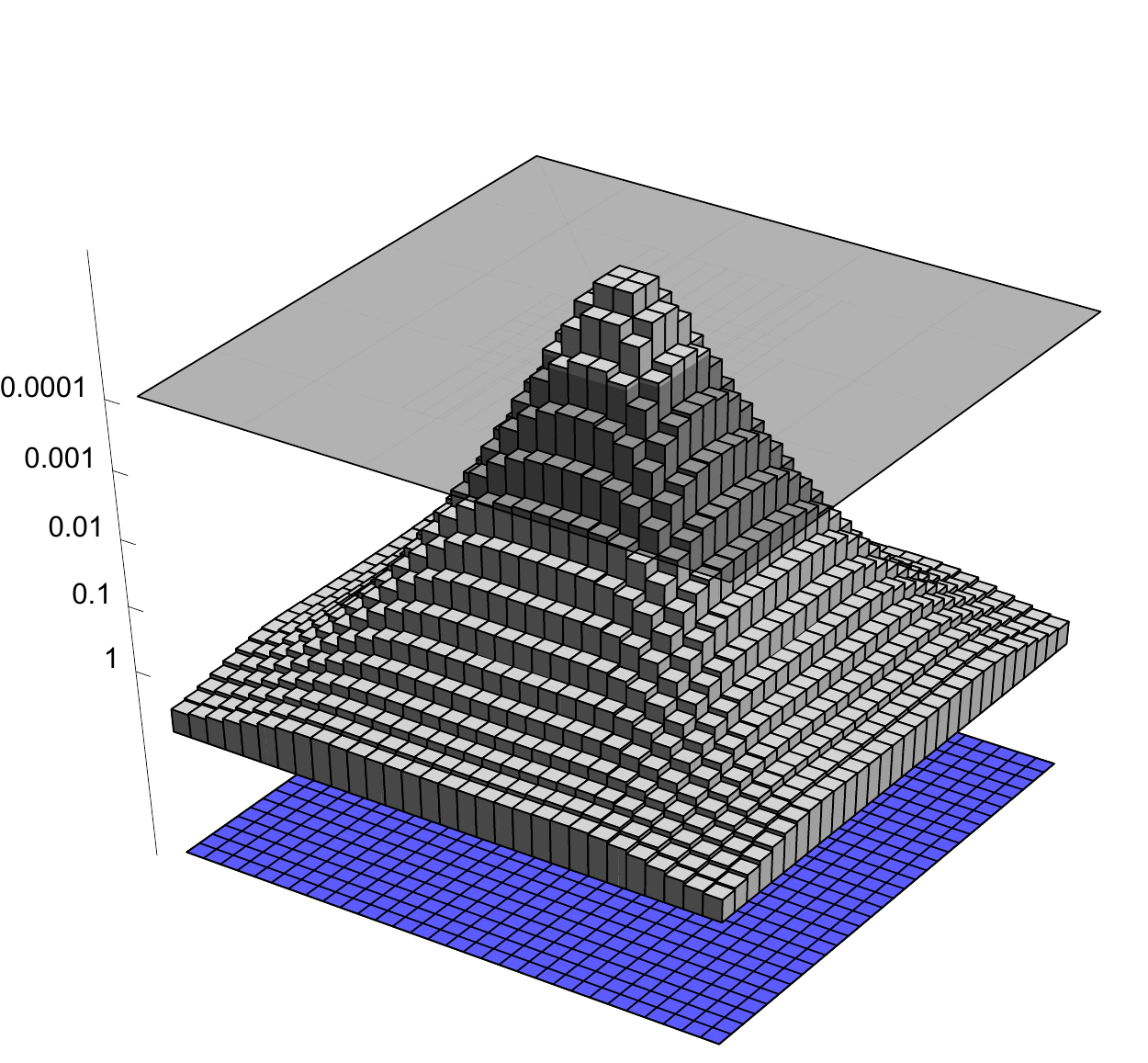}
  \caption{Behavior of the solution for the initial condition $u_0$ from \eqref{eq:BLSu0} 
    at times $t=0.3\times10^{-6}$, $t=1.0\times10^{-6}$, and $t=3.0\times10^{-6}$ (left to right),
    plotted in logarithmic scale.
    The transparent plane at $u\equiv10^{-4}$ has been introduced to visualize 
    that the solution does not obey a maximum principle.}
  \label{fig:BLS3d}
\end{figure}
The sharp flanks at the edge of the plateau drive the dynamics 
and lead to a rather complicated spatial-temporal behavior of the solution, see Figure \ref{fig:BLS3d}.
The right of Figure \ref{fig:BLSinit} shows a one-dimensional cross-section of the solution;
qualitatively, the behavior is in perfect agreement with the one-dimensional simulations from \cite{BLS}.
A time step $\tau=10^{-7}$ has been used for the numerical solution 
in order to resolve the process of creation and destruction of local minima inside the plateau region, 
which happens on a time scale of $10^{-6}$.

\subsubsection{Rates of equilibration}
The goal of the following series of experiments is 
the numerical verification of the analytically estimated rates of equilibration.
We vary the convexity parameter $\lambda$ and 
apply the numerical scheme to the very regular initial condition
\begin{align}
  \label{eq:u0}
  u_0(x) = \frac43\sin^2(3\pi x_1)\sin^2(2\pi x_2)+ \frac13(1+x_1+x_2).
\end{align}
The behavior of entropy and Fisher information are monitored for about one thousand time iterations.
The qualitative change in density is shown in Figure \ref{fig:equilibration}.
The corresponding results for entropy and Fisher information are collected in Figure \ref{fig:entfish}.

In agreement with the analytical estimates in \eqref{eq:dddlssdown}, 
both quantities decay with a rate of at least $(2\lambda^h)^2$.
In fact, in each experiment we measure a minimal decay rate $(2\lambda^h_*)^2$ 
that is strictly larger than the analytically predicted rate.
Generally, the difference $\lambda^h_*-\lambda^h$ is the larger the smaller $\lambda>0$ is,
and becomes negligible for large values $\lambda\gg10$.

This phenomenon is apparently independent of the spatial resolution $h>0$.
Our conjecture is the following.
For solutions to the Fokker-Planck equation \eqref{eq:fokker}, the estimates \eqref{eq:fokkerdecay} are not sharp:
the lower bound on the rate of equilibration is given by $2\lambda_*$ with some $\lambda_*>\lambda$.
This improvement is due to boundary effects:
it is neglegible if the steady state $\pi$ is very concentrated inside $\cube$ (as is the case for $\lambda\gg10$),
but is significant for more equally distributed stationary densities $\pi$ (for $\lambda<10$ or less).
Thanks to its intimate relation to the Fokker-Planck equation \eqref{eq:fokker},
the fourth order equation \eqref{eq:dlss} apparently inherits these improved rates, i.e.,
one can replace $(2\lambda)^2$ by $(2\lambda_*)^2$ in the estimates \eqref{eq:dlssdecay}.
These improved estimates pass on to the estimates \eqref{eq:ddlssdecay} and \eqref{eq:dddlssdown} on the discretization.

Our conjecture is strongly supported by the outcome of the experiments.
In Figure \eqref{fig:entfish}, the decay rates of entropy and Fisher information are compared to $(2\lambda^h_*)^2$,
where $\lambda^h_*>\lambda^h$ is the smallest non-zero eigenvalue of the associated Markov generator $\mc^h$ on $J^h$.
In all of the experiments that have been performed, the numerically measured rate of decay of entropy and Fisher information,
\begin{align*}
  \frac1\tau\left(\log\ent^h(U^{h,\tau}_m)-\log\ent^h(U^{h,\tau}_{m+1})\right)
  \qtextq{and}
  \frac1\tau\left(\log\fish^h(U^{h,\tau}_m)-\log\fish^h(U^{h,\tau}_{m+1})\right),
  \qtext{respectively,}
\end{align*}
never fall below the value $(2\lambda^h_*)^2$.
In fact, the numerically measured rates have always been larger 
but appear to tend towards $(2\lambda^h_*)^2$ as the system approaches equilibrium.
This is in accordance with the observation from \cite{MMS} that the equilibration rates are minimized 
in the linearized regime around the steady state.
\begin{rmk}
  It is tempting to turn the above conjecture into a proof, 
  simply using the spectral gap $\lambda_*$ instead of $\lambda$ as a lower bound on the modulus of geodesic convexity of $\ent$
  and performing all the estimates accordingly.
  However, to our knowledge, there is no result available which allows to estimate
  the modulus of geodesic convexity of a Markov chain 
  --- or the corresponding constant $\lambda$ in the inequality \eqref{eq:cvx-e-d} ---
  by its spectral gap from below.
\end{rmk}
\begin{figure}
  \centering
  \includegraphics[width=0.3\textwidth]{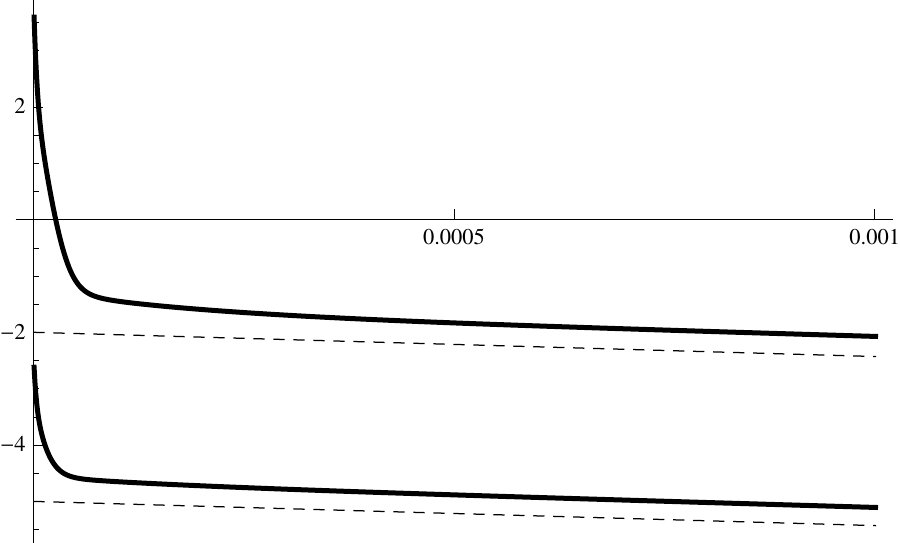}
  \includegraphics[width=0.3\textwidth]{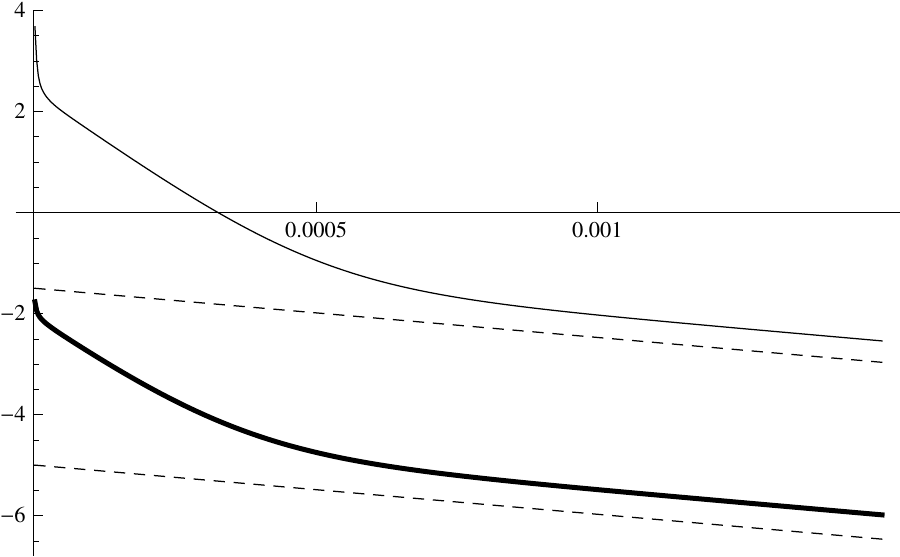}
  \includegraphics[width=0.3\textwidth]{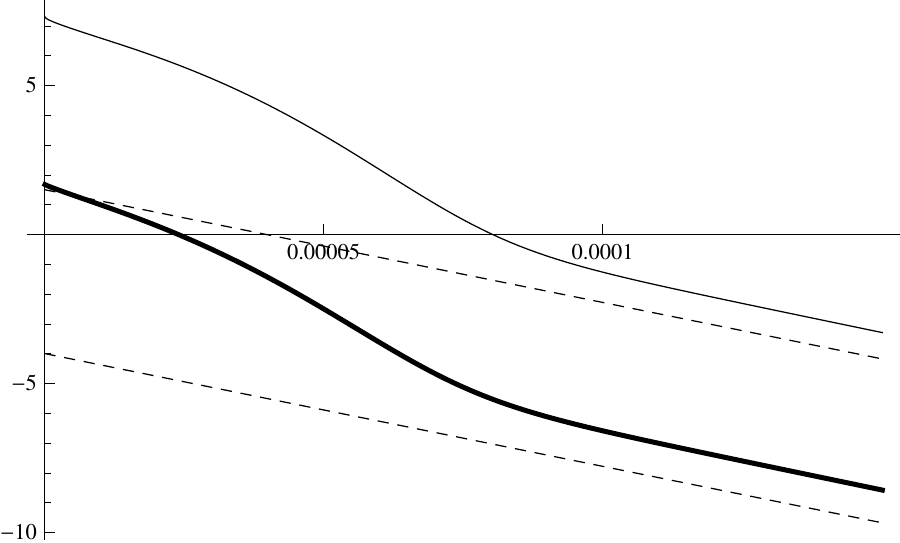}
  \caption{Logarithmic plot of entropy (lower bold curve) and Fisher information (upper bold curve) 
    along discrete solutions for the initial condition \eqref{eq:u0},
    using $\lambda=1$, $\lambda=10$ and $\lambda=100$ (left to right).
    The dotted lines correspond to multiples of $\exp(-(2\lambda^h_*)^2t)$.}
  \label{fig:entfish}
\end{figure}
\begin{figure}
  \centering
  \includegraphics[width=0.3\textwidth]{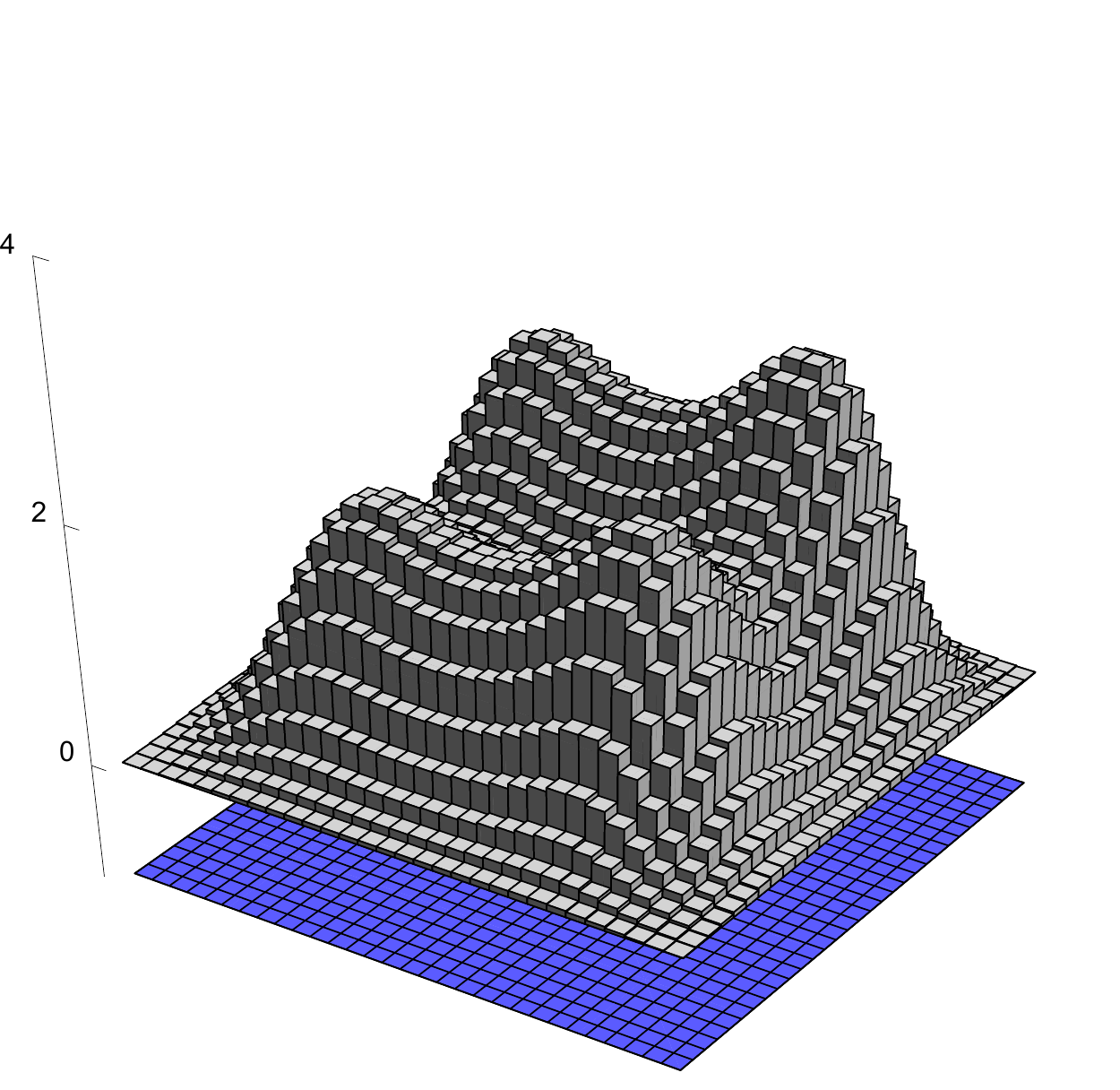}
  \includegraphics[width=0.3\textwidth]{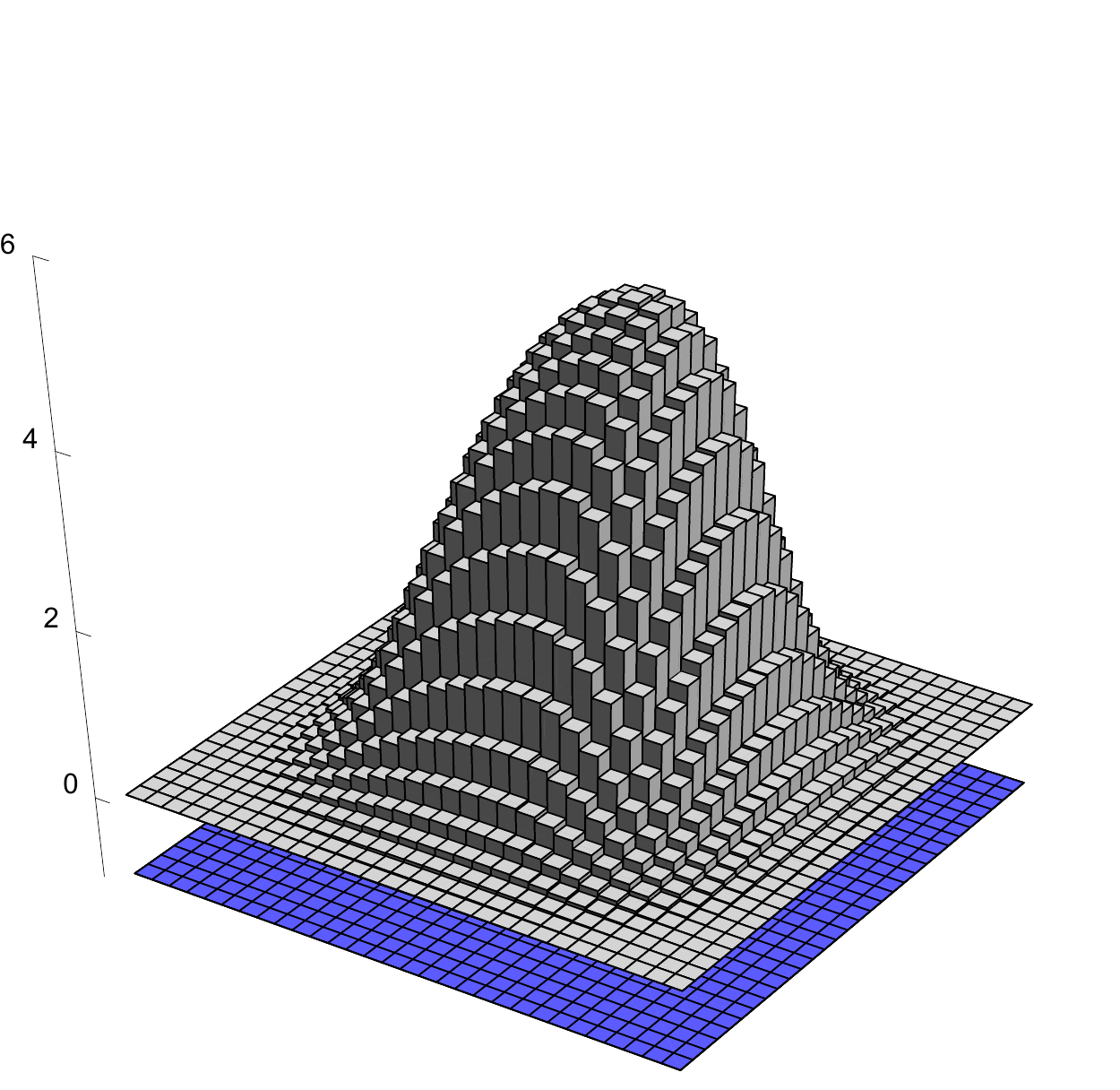}
  \includegraphics[width=0.3\textwidth]{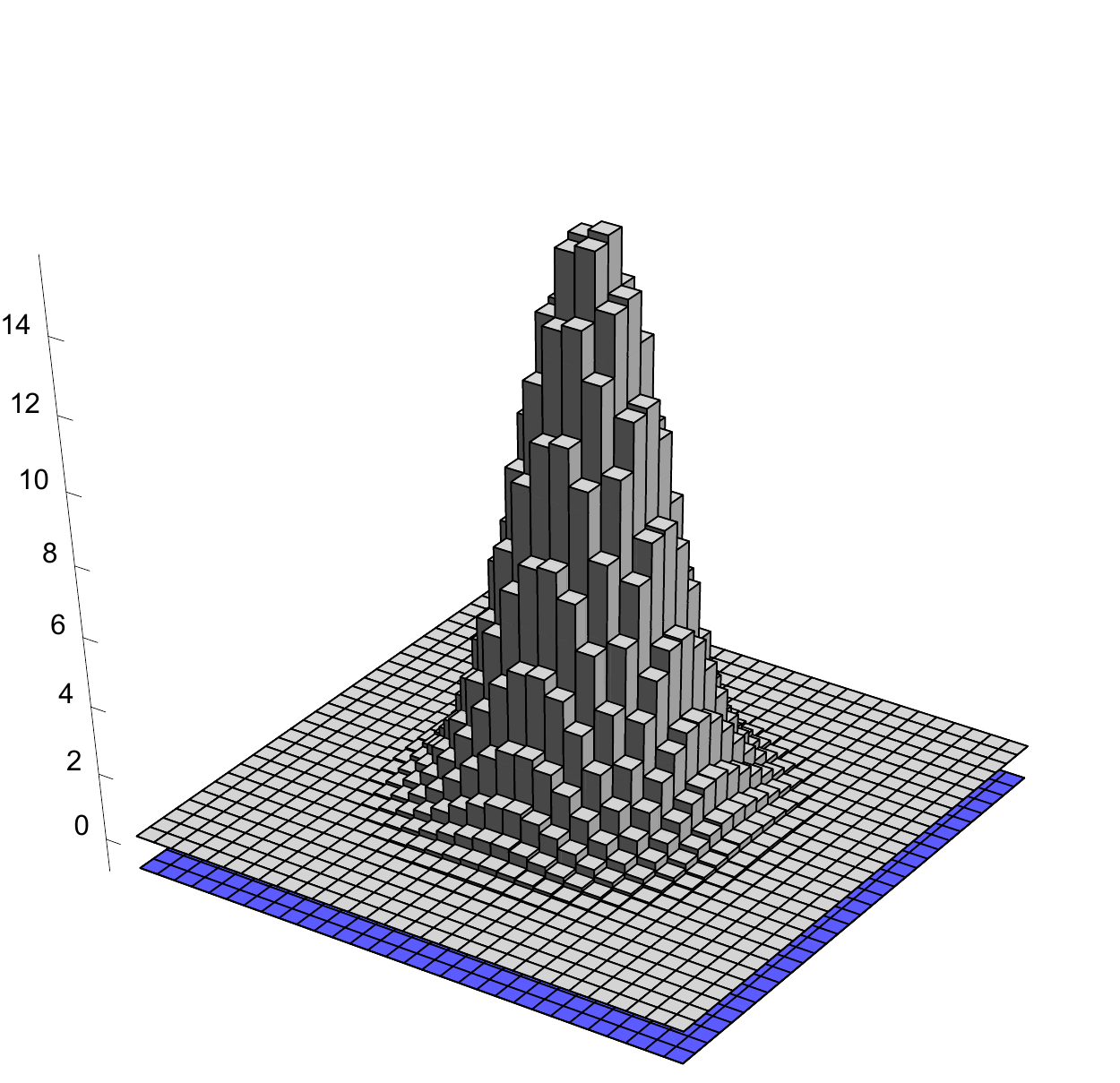}  
  \caption{Snapshots of the discrete solution for the initial condition \eqref{eq:u0}
    using $\lambda=100$ at, respectively, $t=10^{-5}$, $t=3\cdot10^{-5}$ and $t=10^{-4}$ (left to right).
    Note the changing scale.}
  \label{fig:equilibration}
\end{figure}

\bibliography{dlss}

\bibliographystyle{plain}

\end{document}